\DeclareMathOperator{\supp}{{\rm supp}}
\newtheorem{theorem}{Theorem}
\newtheorem{lemma}{Lemma}[section]
\newtheorem{conjecture}{Conjecture}
\newtheorem{proposition}{Proposition}
\newtheorem{definition}{Definition}
\newtheorem{claim}{Claim}[section]
\numberwithin{equation}{section}
\newcommand{\dt}{\partial_t}
\newcommand{\ds}{\partial_s}
\newcommand{\di}{\partial_i}
\renewcommand{\dj}{\ensuremath{\partial_j}} 
\newcommand{\dn}{\partial_n}
\newcommand{\dij}{\partial_{ij}}
\newcommand{\dyn}{\partial_{y_n}}
\newcommand{\dx}{\partial_x}
\newcommand{\grad}{\nabla}
\newcommand{\gradx}{\grad_{\negthickspace x}}
\newcommand{\grady}{\grad_{\negthickspace y}}
\newcommand{\gradsy}{\grad_{\negthickspace s,y}}
\newcommand{\lap}{\Delta}
\newcommand{\lapx}{\lap_x}
\newcommand{\lapy}{\lap_y}
\newcommand{\yzeron}{y_{0,n}}
\newcommand{\dist}{d}
\newcommand{\Rn}{{\mathbb{R}^ n}}
\newcommand{\R}{\mathbb{R}}
\newcommand{\RR}{\mathbb{R}}
\newcommand{\ren}{\mathbb{R}^n}
\newcommand{\ve}{\varepsilon}
\newcommand{\N}{\mathbb{N}}
\newcommand{\Pos}{\mathcal{P}}
\newcommand{\ldef}{\vcentcolon=}
\title[Flatness implies smoothness]{Flatness implies smoothness for solutions \\of the Porous Medium Equation}
\author{Clemens Kienzler}
\address{(CK) Mathematisches Institut der Universit\"at Bonn, Endenicher Allee 60, 53115 Bonn, Germany}
\email{kienzler@math.uni-bonn.de}
\author{Herbert Koch}
\address{(HK) Mathematisches Institut der Universit\"at Bonn, Endenicher Allee 60, 53115 Bonn, Germany}
\email{koch@math.uni-bonn.de}
\author{Juan Luis V\'azquez.}
\address{(JLV) Dpto. de Matem\'aticas, Univ. Aut\'onoma de Madrid, 28049 Madrid, Spain}
\email{juanluis.vazquez@uam.es}
\keywords{Porous medium equation, flatness, $C^\infty$ regularity, free boundaries}
\subjclass[2010]{35B45, 35K55, 35K65, 76S99}
\begin{document}

\begin{abstract}
  One of the major problems in the theory of the porous medium
  equation $\dt \rho=\lapx \rho^m, $ $ m > 1$, is the regularity of
  the solutions $\rho(t,x)\ge 0$ and the free boundaries
  $\Gamma=\partial\{(t,x): \rho>0\}$.  Here we assume flatness of the
  solution and derive $C^\infty$ regularity of the interface after a
  small time, as well as $C^\infty$ regularity of the solution in the
  positivity set and up to the free boundary for some time interval.

  We use these facts to prove the following eventual regularity
  result: solutions with compactly supported initial data are smooth
  after a finite time $T_r$ that depends on $\rho_0$. More precisely,
  $\rho^{m-1}$ is $C^\infty$ in the positivity set and up to the free
  boundary, which is a $C^\infty$ hypersurface for $t \ge
  T_r$. Moreover, $T_r$ can be estimated in terms of only the initial
  mass and the initial support radius. This result eliminates the
  condition of non-degeneracy on the initial data that has been
  carried on for decades in the literature. Let us recall that
  regularization for small times is false, and that as $t\to\infty$
  the solution increasingly resembles a Barenblatt function and the
  support looks like a ball.

\end{abstract}

\maketitle

\section{Introduction}\label{intro}

We consider the porous medium equation (PME), that we will write as
\begin{equation}\label{PME}
\dt \rho=k\lapx \rho^m, \qquad  m > 1\,.
\end{equation}
We assume that the solution is defined in a time-space cylinder $ Q
\ldef I \times \Omega $ with an open time interval $ I \ldef (t_1,t_2)
$ and an open set $ \Omega \subset \Rn $. This is convenient for the
local regularity theory. In the last part we consider global solutions
defined in \ $ Q \ldef (0,\infty) \times \RR^{n} $.

We introduce a constant $k>0$ for convenience in the calculations
though it is irrelevant in the results since it can be absorbed for
instance into the $\rho$ variable, without changing time or space. To
be precise, the change $\tilde \rho = k^{1/(m-1)}\rho $ \ allows to
pass from $ k > 0$ to the value $\tilde k=1 $ of the usual PME.  We
will fix the value \ $ k = \frac{m-1}{m}$ \ throughout the paper,
since in this way the explicit formulas that enter our computations in
the local regularity theory will be easier to read. In particular, the
equation can then be written as
\begin{equation}
\dt \rho=\nabla\cdot(\rho\nabla \rho^{m-1})\,,
\end{equation}
and we use the formula $ v=\rho^{m-1} $ to define the `pressure' and
then $v(t,x)$ satisfies
\begin{equation}
\dt v = (m-1) v \lapx v + |\gradx v|^2.
\end{equation}
It is well-known that the equation can be solved in a unique way in
the sense for instance of continuous weak solutions, after giving
Dirichlet data in the parabolic boundary of $Q$, \cite{Aro86,
  vazquezPME}, or just giving initial data if the space domain is
$\R^n$. It is also well-known that initial data that vanish say in the
closure of an nontrivial open set at the initial time $t_1$ will
vanish for a certain time interval $J= (t_1, t_0)$, $ t_0 \le t_2 $,
in a possibly smaller set. This property is called finite speed of
propagation. If we denote the positivity set of the solution by
$\mathcal P(\rho)=\{(t,x): \rho(t,x)>0\}$ and write the section at
time $t$ as
\begin{equation}\label{positivityset}
{\mathcal P}(t)=\{ x\in \Omega \mid \rho(t,x)>0 \}\,,
\end{equation}
then finite propagation means that ${\mathcal P}(t)$ is smaller than $
\Omega $ for all $t\in J$.  The consequence of finite propagation is
the existence of a {\sl free boundary} or {\sl interface}
$\Gamma=\partial {\mathcal P}\cap Q$, where the transition from
the `gas region' $\mathcal{P}$ to the `empty region' $\{\rho=0\}$ takes
place.

The study of the free boundary $\Gamma$ plays a key
role in the regularity theory for the PME \eqref{PME}, since the
equation is not parabolic  at those points.  The positivity set
$\mathcal{ P}(t)$ is open and increasing with respect to
$t$ for $ t \ge t_1$, and strictly increasing outside the original support.
Thus, the free boundary is given as a graph of
the form $t= h(x)$, where $h$ is a function defined in the closure of
the complement of the support of $\rho(t_1,\cdot)$. Let us consider
global solutions for simplicity.  It was shown in \cite{CaffFr} that
the function $ h $ is at least H\"older continuous.  As a consequence
of \cite{CVW87}, $h$ is Lipschitz continuous for $t$ sufficiently
large when the initial datum is a compactly supported and
non-degenerate function (roughly that the velocity $|\nabla
v|_{\partial P(t)}$ at the free boundary $\partial P(t)$ is bounded
  from below, see \cite{CVW87} for a nondegeneracy condition on the
  initial data ensuring non-degeneracy of the solution for large time,
  which was relaxed in \cite{Koc99}), and hence the solution is
  monotone in a cone of directions including the time direction
  locally near the free boundary.  A decade later, the second author
  \cite{Koc99} proved  that solutions satisfying this monotonicity and
  non-degeneracy conditions are actually smooth up to the free
  boundary.

  We recall that regularity has an important consequence
  for the dynamics of the free boundary $\Gamma$. Indeed, once you
  prove that $\Gamma$ is (at least locally) a smooth hypersurface in
  $\RR^{n+1}$ and the pressure is also smooth and non-degenerate on
  the occupied side $v >0$ and up to the free boundary, then standard
  theory shows that Darcy's law holds on that part of $\Gamma$, in
  both forms: $v_t=|\nabla v|^2$ or $V_n=|\nabla v|$, cf. \cite{vazquezPME} (where $V_n$
  denotes the normal advancing speed of the free  boundary,
  and both $v_t$ and $\nabla v$   are calculated as lateral limits from the region $\{v>0\}$).

  This paper was motivated by the wish to obtain a higher regularity
  theory for local solutions, and to eliminate the extra conditions of
  monotonicity and non-degeneracy on the initial data that were needed
  to establish the higher regularity of solutions and free boundaries
  in dimensions $n\ge 2$. Such result was known for $n=1$,
  cf. \cite{ArVaz87}. For years the efforts have been unsuccessful in
  several dimensions. In this paper we contribute a key step by
  considering {\sl locally flat solutions}, in the sense that they are
  sandwiched on a cylinder between two traveling wave solutions lying
  very close to each other, and then we show that they are smooth up
  to the free boundary in half of the cylinder (thus, a bit later in
  time). See Definition \ref{def.flat} below for the concept of
  $\delta$-flatness, and Theorem \ref{th-main} for the precise
  formulation of our main result.

Important steps in the direction of this result were taken in the
Dissertation of the first author, \cite{Kienz13}, which deals with the
special case of flat solutions on a global scale, and such results
will be used in the course of our project.  On the other hand, our
main result has as a consequence the result we were originally looking
for, concerning the large time behavior of global solutions (i.\,e.,
defined on the whole $\R^n$) with compactly supported initial data. As
a consequence of the well-known asymptotic convergence towards a
Barenblatt profile \cite{vazquez1}, such solutions satisfy our flatness
condition for large enough times. This allows to dispense completely
with the regularity and non-degeneracy assumptions used in
\cite{CVW87} and \cite{Koc99} on the initial data.

In particular, we will prove that the free boundary function $h$ above
is $C^\infty$ smooth outside a large ball, and it converges to the
free boundary of the Barenblatt solution with same mass, see Theorem
\ref{thm2}. Moreover, the pressure of the solution is $C^\infty$ in
the positivity set and up to the free boundary for times \ $t\ge
T(u_0)$, a lower bound that can be estimated in terms of the initial
mass and the initial radius. Much is known about the precise behavior
of solutions when $t\to\infty$.  Recently, \cite{Seis1, Seis2} has
obtained very precise asymptotics as $t\to \infty$ for solutions with
compact support.

\subsection{Preliminaries, definitions and main result.} Our work uses
many different tools of the PME theory, most of them can be found in
\cite{vazquezPME}. Two basic facts will have a special relevance, the use
of the scaling group and the existence of a family of {\sl traveling
  wave solutions}.

\noindent $\bullet$ The first one consists of the observation that,
due to the symmetries of the PME, whenever $\rho(t,x)$ is a solution
in a given cylinder, so is the expression
\begin{equation}\label{scaling.form}
\widetilde{\rho}(t,x)=A\,\rho(C\,(t-t_0), L\,(x-x_0))
\end{equation}
for any constants $A,C,L>0$ such that $A^{m-1}=CL^{-2}$, and all
displacement constants $x_0\in \ren$ and $t_0\in \RR$. Of course, the
domain of definition varies accordingly. We may add also a rotation or
symmetry of the space variables without affecting the validity of the
solution.

\noindent $\bullet$ The {\sl traveling waves } are a family of
solutions that are perfectly flat in terms of the pressure variable
$v=\rho^{m-1}$. They are given by the formula
\begin{equation}
  \rho_{tw}^{m-1}(t,x; {\bf a},c,d) \ldef c \, (c \, t  + \langle {\bf a}, x \rangle + d)_+,
\end{equation}
defined for $(t,x) \in \R \times \Rn $. There are a number of
parameters that can be fixed at will: the wave speed $ c > 0 $; $ {\bf
  a} \in \RR^{n} $ is a unit vector indicating the direction ($-{\bf
  a}$ would be more appropriate) in which the wave front propagates;
the displacement $ d \in \RR^{n} $ is also arbitrary and incorporates
possible displacements of the $x$ or $t$ axis or both. If $ {\bf a} $
points into the $ n $-th coordinate direction and the wave travels at
unit speed $ c = 1 $, the formula for the traveling wave at $ (t_0,
x_0) = (0,0) $ simplifies to
\begin{equation}\label{form-twc}
	\rho_{tw,d}(t,x) \ldef  (t  + x_n + d)_+^\frac{1}{m-1},
\end{equation}
We will drop the subscript when the displacement $ d = 0 $ .

\noindent $\bullet$ Next, we need a rough concept of flatness of a solution. For
  easier reference in more general applications we state it for any
  $a$, $c$, and some $d>0$.

\begin{definition}\label{def.flat}
  Consider $ (t_0,x_0) \in \R \times \Rn $ and $ \delta $, $ R > 0
  $. We say that $ \rho $ is a $ \delta $-flat solution of the PME at
  $ (t_0,x_0) $ on scale $ R $ if there exist $ V > 0 $ and a unit
  vector $ {\bf a} \in \Rn $ such that

\begin{enumerate}
\item $ \rho $ is a nonnegative and continuous weak solution of the $
  PME $ on the cylinder
	\[
		 Q_{R,V}(t_0,x_0) \ldef (t_0 - \frac{R}{V},t_0] \times B_R(x_0)
	\]
\item and
\begin{equation}
  \rho_{tw}(t - t_0,x - x_0; {\mathbf a},V,-\delta R) \leq \rho(t,x) \leq \rho_{tw}(t - t_0,x - x_0; {\mathbf a},V,\delta R)
\end{equation}
on $ Q_{R,V}(t_0,x_0) $.
\end{enumerate}
We call $ V $ the speed and ${\mathbf a} $ the direction of $ \rho $.
\end{definition}

In what follows we use the normalized values  $V=1$  and ${\bf a}=e_n$
unless mention to the contrary. Moreover, we should say a $ \delta
$-approximate speed and a $ \delta $-approximate direction to be
precise, since they are not unique for any given solution, they may
admit small variations.

Let us perform the scaling reduction.  Given a $ \delta $-flat
solution $ \rho $ at $ (t_0,x_0) $ on scale $ R $ with $ \delta
$-approximate speed $ V $ and $ \delta $-approximate direction $ a $,
we may use the two-parameter scaling group with parameters $R,V>0$ and
introduce any orthogonal matrix $ O $ with $ O e_n = {\bf a} $ to
obtain a function
\begin{equation} \tilde \rho(t,x) = (V R)^{-\frac1{m-1}} \rho\left(
    V^{-1} \, R \, t + t_0, R \, O x + x_0 \right)
\end{equation}
which is another solution of the PME (with the same $k$) and this
function is $ \delta $-flat at $ (0,0) $ on scale $ 1 $ with $ \delta
$-approximate unit speed and $ \delta $-approximate direction $ e_n $;
in other words, we have
\begin{equation}
  \rho_{tw,-\delta} \leq \tilde \rho \leq \rho_{tw,\delta} \quad \text{ in } Q_{1,1}(0,0)\,,
\end{equation}
meaning that $ \tilde \rho $ is trapped between two traveling wave
solutions with velocity $1$ that lie at a distance $ 2 \delta $ in the
unit cylinder centered at zero. Note that exactly one of the traveling
wave solutions is positive in $(0,0)$. We are now ready to state our
main contribution.

\begin{theorem}\label{th-main} There exists $ \delta_0>0$ such that the following holds: \\
  If $ \rho $ is a nonnegative $ \delta $-flat solution of the PME at
  $ (0,0) $ on scale $ 1 $ with $ \delta $-approximate direction $ e_n
  $ and $ \delta $-approximate speed $ 1 $, and $ \delta \leq
  \delta_0$, then for all derivatives we have uniform estimates
\begin{equation}
 | \dt^k \dx^\alpha \gradx (\rho^{m-1} -(x_n+ t)) | \le C \delta
\end{equation}
at all points $(t,x)\in ([-1/2,0] \times \overline{B(0,\frac12)})\cap
\mathcal P(\rho) $ with $C=C(n,m,k,\alpha)>0$. In particular,
$\rho^{m-1}$ is smooth up to the boundary of the support in
$(-\frac1{2},0]\times B_{1/2}$, and
\begin{equation}
|\gradx \rho^{m-1}-e_n |, \quad |\partial_t \rho^{m-1} - 1|  \le C \delta\,.
\end{equation}
Moreover, the level sets for positive values of $\rho$ and the free boundary are uniformly smooth hypersurfaces inside $(-\frac1{2} ,0]\times B_{\frac12}(0)$.
\end{theorem}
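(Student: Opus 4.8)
The plan is to treat the problem as a perturbation of the traveling wave $\rho_{tw}(t,x)=(t+x_n)_+^{1/(m-1)}$ and to work in the pressure variable $v=\rho^{m-1}$, which satisfies $\dt v=(m-1)v\lapx v+|\gradx v|^2$. Writing $v=(t+x_n)_+ + w$, the flatness hypothesis gives $|w|\le C\delta$ in $Q_{1,1}(0,0)$, and $w$ solves a linearized equation that, after the degenerate change of variables that straightens the free boundary of the background wave, becomes uniformly parabolic. The natural coordinates are the ones adapted to the model: set $\xi = t+x_n$ (the signed distance to the background interface in the moving frame), keep $t$ and $x'=(x_1,\dots,x_{n-1})$, and treat $\sqrt{\xi}$ as the correct parabolic scaling direction normal to the free boundary. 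In these coordinates the operator $v\,\partial_{nn}$ becomes, to leading order, $\xi\,\partial_{\xi\xi}$, i.e. a Bessel-type degenerate operator for which a well-developed Schauder theory exists; this is exactly the framework in which the global flat case was handled in \cite{Kienz13}, and the whole point is that the local statement reduces, after the scaling reduction already performed in the excerpt, to an interior estimate for that theory.

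The steps, in order, are as follows. \emph{Step 1 (basic control and nondegeneracy).} From Definition~\ref{def.flat} and the comparison principle deduce that $\mathcal P(\rho)$ is, inside a slightly smaller cylinder, trapped in a $2\delta$-neighborhood of $\{t+x_n>0\}$, and that $\gradx v$ is nondegenerate there — more precisely $\partial_n v \ge 1/2$ up to the free boundary — by comparison with nearby tilted traveling waves; this makes the free boundary locally a Lipschitz graph $x_n = g(t,x')$ with $|g|\le C\delta$ and gives the equation genuine (degenerate-)parabolic structure. \emph{Step 2 (flatten the free boundary).} Via the hodograph–Legendre-type transform used in \cite{Koc99, Kienz13}, or directly via the change $\xi = v(t,x)$ — using that $v$ is monotone in $x_n$ — rewrite the problem on the fixed half-space $\{\xi>0\}$; the transformed unknown is a small ($O(\delta)$) perturbation of the identity and solves a quasilinear equation whose linearization at the model is the Bessel operator $\partial_t - (m-1)\xi\partial_{\xi\xi} - \xi\lap_{x'} - \partial_\xi - \partial_{x'}\!\cdot(\cdots)$ with good coefficients. \emph{Step 3 (Schauder iteration).} Apply the degenerate Schauder estimates (interior in $t,x'$, up to and including $\xi=0$) from the flat theory to bootstrap: $w\in C^{k}$ for all $k$ with norms bounded by $C(n,m,k,\alpha)\,\delta$, because the right-hand side at each stage is quadratic in lower-order quantities already known to be $O(\delta)$. \emph{Step 4 (transfer back).} Undo the change of variables on $[-1/2,0]\times B_{1/2}$ — here one loses a fixed fraction of the cylinder in space and moves forward in time, which is why the conclusion holds on the half-cylinder — to get $|\dt^k\dx^\alpha\gradx(v-(x_n+t))|\le C\delta$ up to $\partial\mathcal P(\rho)$, and in particular $|\gradx v - e_n|, |\dt v-1|\le C\delta$. \emph{Step 5 (geometry of level sets).} Since $\gradx v$ is within $C\delta$ of $e_n$ and all derivatives of $v$ are controlled up to $\{v=0\}$, the implicit function theorem applied to $v=\lambda$ for each $\lambda\ge 0$ yields that every such level set, including the free boundary $\lambda=0$, is a $C^\infty$ graph over $B_{1/2}$ with uniformly bounded derivatives.

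The main obstacle is \emph{Step 2–3}: controlling the degeneracy at the free boundary. The equation $\dt v=(m-1)v\lapx v+|\gradx v|^2$ loses uniform parabolicity exactly where $v\to 0$, so ordinary parabolic Schauder theory does not apply; one must invoke the tailored weighted/degenerate Schauder framework for the Bessel-type operator, verify that the flatness assumption (not mere continuity or Lipschitz regularity) is precisely what puts the perturbed coefficients in the right weighted Hölder class with small norm, and check that the nonlinear feedback terms — products of first derivatives, and the $v$-factor multiplying the second derivatives — are controlled by the weight so the fixed-point/iteration closes. The reduction to the already-known global flat results of \cite{Kienz13} is what makes this tractable: after the scaling normalization in the excerpt, one needs only the interior part of that theory, localized away from the lateral boundary $\partial B_1$ by a standard cutoff argument in the $t,x'$ variables, where the operator is non-degenerate.
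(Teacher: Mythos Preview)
Your proposal has a genuine gap at Step~1, and the rest of the argument hinges on it. You claim that from $\delta$-flatness and ``comparison with nearby tilted traveling waves'' one can deduce $\partial_n v \ge 1/2$ up to the free boundary. But $\delta$-flatness is only an $L^\infty$ sandwich $|v-(t+x_n)|\le\delta$; it gives no direct control on $\nabla v$. A tilted traveling wave $(t+x_n-\delta+\varepsilon x_j)_+$ is indeed a subsolution, but it does not lie below $\rho$ on the parabolic boundary of $Q_1$ (the term $\varepsilon x_j$ changes sign), so the comparison does not run. Obtaining $|\nabla v-e_n|\le C\delta$ is in fact the heart of the theorem, not a preliminary; assuming it makes the argument circular. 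Without it, Step~2 is ill-posed (the hodograph transform needs $\partial_n v>0$) and in Step~3 the coefficients of the transformed equation are not known to be small perturbations of the model in the weighted H\"older class.

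The paper's route is structurally different and is designed precisely to avoid this circularity. Rather than attack $\rho$ directly, it proves an \emph{improvement of flatness} step (Proposition~\ref{improves}): if $\rho$ is $\delta$-flat at scale $1$, then at some fixed smaller scale $r$ it is $\delta/2$-flat in a slightly rotated direction. The mechanism is to build global comparison solutions $\rho_\pm$ whose initial data are engineered to satisfy the gradient hypothesis~\eqref{mucond} of Kienzler's global result (Proposition~\ref{prop_global}): one takes $\rho_{\pm,0}=\rho(t_1,\cdot)$ where $\rho^{m-1}\gtrsim\delta$ (there the \emph{interior} parabolic estimate, Proposition~\ref{regularity}(i), supplies the gradient bound), and glues to traveling waves where $\rho^{m-1}$ is small. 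Then Gaussian difference estimates (Proposition~\ref{difference}) show $\rho_+-\rho_-$ is much smaller than $\delta$ after a short time, and Taylor expansion of $\rho_-^{m-1}$ at a free boundary point yields the improved traveling-wave approximation. Iterating gives $C^{1,\alpha}$ of $v$ and the free boundary; only \emph{then} does the hodograph/von~Mises machinery of Section~\ref{sec.vonmises} and Proposition~\ref{regularity}(ii) upgrade to $C^\infty$. Your Steps~2--5 are close in spirit to this last upgrade, but they cannot be invoked until the $C^{1}$ control is in hand, and that requires the Caffarelli-type iteration you have bypassed.
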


Theorem \ref{th-main} is formulated in a normalized setting. It can be
combined with the symmetries of the porous medium equation in the
obvious fashion to get a result in a not normalized setting.  We point
out that though our strategy follows the general ideas of the proofs
developed by Caffarelli for similar free boundary problems, we
considerably deviate from his arguments in the proof of Proposition
\ref{improves} below, mainly because the proper linearization of
geometry and density at the free boundary is a nonstandard degenerate
equation, the solutions of which must replace the use of harmonic and
caloric functions used in the standard theory of free boundary
problems.

Theorem \ref{th-main} implies the eventual $C^\infty$-regularity
result for global solutions that we have already mentioned as our
second contribution.  We use the notation
$R_B(t)=c_1(n,m)\,M^{(m-1)\lambda}t^{\lambda} $ with
\begin{equation}\label{lambda}
\lambda=1/(n(m-1)+2)
\end{equation}
for the Barenblatt radius for the solution with mass $M$ located at the origin.

\begin{theorem}\label{thm2} Let $\rho\ge 0$ be a solution of the PME
  posed for all $x\in \Rn$, $n\ge 1$, and $t>0$, and let the initial
  data $\rho_0$ be nonnegative, bounded and compactly supported with
  mass $M= \int \rho_0 dx>0$.  Then, there exists a time $T_r$
  depending on $\rho_0$ such that for all $t>T_r$ we have:

\begin{itemize}
\item[(i)] The pressure of the solution $\rho^{m-1}$ is a $C^\infty$
  function inside the support and is also smooth up to the free
  boundary, with $\nabla \rho^{m-1} \ne 0 $ at the free
  boundary. Moreover, the free boundary function $t=h(x)$ is
  $C^\infty$ in the complement of the ball of radius $R(T_r)$ and,
  there exists $c>0$ such that
\begin{equation} \label{precise}
  \begin{split}
    t^{-n\lambda} \Big(a^2 M^{2(m-1)\lambda}- c t^{-2\lambda} -
    \frac{\lambda |x-x_0|^2}{2 t^{2\lambda}} \Big)^{\frac{1}{m-1}}_+
    \le & \rho(t,x)\\ & \hspace{-4cm} \le t^{-n\lambda} \Big(a^2
    M^{2(m-1)\lambda}- c t^{-2\lambda} - \frac{\lambda |x-x_0|^2}{2
      t^{2\lambda}} \Big)^{\frac{1}{m-1}}_+
\end{split} \end{equation}
where   $x_0=M^{-1}\int x\rho(x)dx$  is the conserved center of mass, and $a$ is the constant defined in \eqref{defa}. Moreover,
\begin{equation} \label{precisesupport}
B_{R_B(t)-c t^{-\lambda}}(x_0) \subset \supp \rho \subset B_{R_B(t)+ct^{-\lambda}}(x_0)
\end{equation}
\item[(ii)] Moreover, if the initial function is supported in the ball
  $B_R(0)$, then we can write the upper estimate of the regularization
  time as \
\begin{equation}\label{lowerboundontime}
    T_r=T(n,m) M^{1-m}  R^{\frac1\lambda}.
\end{equation}
\end{itemize}
\end{theorem}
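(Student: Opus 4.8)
The plan is to deduce Theorem \ref{thm2} from Theorem \ref{th-main} via the known large-time asymptotics of global solutions with compactly supported data. First I would recall the result of \cite{vazquez1}: if $\rho$ solves the PME on $(0,\infty)\times\Rn$ with nonnegative, bounded, compactly supported initial datum of mass $M$, then after the time-space rescaling that fixes the mass, $\rho(t,\cdot)$ converges uniformly (in the pressure variable, and with matching supports) to the Barenblatt profile $\mathcal B_M$ of the same mass centered at the conserved center of mass $x_0$. Quantitatively, one has $\| t^{n\lambda}\rho(t,\cdot) - \mathcal B_M(\,\cdot\,;1)\|_\infty \to 0$ together with convergence of the free boundaries, with a rate; this is exactly the statement that near any free boundary point of the rescaled flow the solution is trapped between two traveling waves at mutual distance $2\delta(t)$ with $\delta(t)\to 0$. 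The key point is that the Barenblatt profile near its (smooth, strictly moving) free boundary looks, after the scaling group \eqref{scaling.form} and a rotation taking the outward normal to $e_n$, like the unit-speed traveling wave $\rho_{tw}$; so for $t$ large enough that $\delta(t)\le\delta_0$, the hypothesis of Theorem \ref{th-main} is met at every free boundary point of the solution, on a scale comparable to $R_B(t)$.

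Second, I would run Theorem \ref{th-main} at each such point. Undoing the normalization (using the stated combination of Theorem \ref{th-main} with the PME symmetries), this gives that $\rho^{m-1}$ is $C^\infty$ up to the free boundary in a neighborhood of every boundary point, with $\nabla\rho^{m-1}\neq 0$ there, for $t>T_r$; interior $C^\infty$ regularity of the pressure in the positivity set is classical (the PME is uniformly parabolic for the pressure where $v>0$), so part (i)'s regularity assertion follows. That $\nabla\rho^{m-1}\neq 0$ on $\Gamma$, combined with the implicit function theorem, upgrades the free boundary to a $C^\infty$ graph $t=h(x)$ outside $B_{R(T_r)}$; the convergence of $h$ to the Barenblatt free boundary is then just a restatement of the asymptotic convergence. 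The two-sided bounds \eqref{precise} and the support sandwiching \eqref{precisesupport} I would get by comparing $\rho$ with explicit sub- and supersolutions of Barenblatt type with slightly perturbed mass/radius parameters — this is a standard comparison argument (one can, e.g., invoke the results leading to the asymptotic expansion in \cite{vazquez1} or an intersection-comparison/maximum-principle estimate), with the constant $c$ absorbing the rate.

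Third, for part (ii) I would make the time quantitative by tracking how the flatness parameter $\delta(t)$ depends on the data through scaling alone. Here is the crucial observation: the PME has the scaling symmetry \eqref{scaling.form}, and the flatness $\delta$ at a free boundary point on the natural scale is scale-invariant; a solution with datum supported in $B_R(0)$ of mass $M$ is, after the rescaling $\rho\mapsto \lambda_0^{-n}\rho(\lambda_0^{-(n(m-1)+2)\cdot}\,t,\lambda_0^{-1}x)$ with $\lambda_0$ chosen so that the support radius becomes $1$, a solution with datum supported in $B_1(0)$ and mass $M' = M \lambda_0^{?}$ — and one can further rescale to normalize the mass as well. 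Pushing the support radius to a fixed size and the mass to a fixed size is possible because the scaling group is two-parameter; the residual dependence is only through $M^{1-m}R^{1/\lambda}$, which is precisely the combination in \eqref{lowerboundontime} (it has the dimensions of time under the PME scaling, as one checks from \eqref{lambda}). Thus the universal ``flatness time'' $T(n,m)$ for the normalized problem — the time after which $\delta(t)\le\delta_0$, which exists by the asymptotics and depends only on $n,m$ — pulls back to $T(n,m)M^{1-m}R^{1/\lambda}$ in the original variables.

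The main obstacle is the quantitative dependence in part (ii): the asymptotic convergence $\delta(t)\to 0$ from \cite{vazquez1} is, in its simplest form, qualitative, so to pin down $T_r$ one must either invoke a rate (available in the literature, e.g. the $L^1$ or relative-entropy rates, transferred to the uniform/free-boundary convergence) or — and this is the cleaner route — argue purely by scaling: establish that for the \emph{normalized} family (say, all nonnegative data supported in $B_1(0)$ with mass $1$, or more precisely a scaling-invariant normalization) there is a single time $T(n,m)$ after which every such solution is $\delta_0$-flat at each free boundary point, using compactness of that family together with the qualitative asymptotics, and then transport this through the scaling group. The delicate bookkeeping is making sure the scaling that normalizes \emph{both} $M$ and $R$ is consistent and leaves exactly the factor $M^{1-m}R^{1/\lambda}$; everything else is comparison-principle routine.
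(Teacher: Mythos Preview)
Your high-level strategy---use large-time convergence to the Barenblatt profile to verify $\delta$-flatness near the free boundary, then invoke Theorem~\ref{th-main}, and reduce part~(ii) to a normalized class via the scaling group---is exactly the paper's route, and the scaling bookkeeping that singles out $M^{1-m}R^{1/\lambda}$ is correct.

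However, there is a genuine gap in how you propose to obtain the quantitative content of the theorem. The two-sided bounds \eqref{precise} carry the specific rate $t^{-2\lambda}$, and the support estimate \eqref{precisesupport} the rate $t^{-\lambda}$; these do \emph{not} follow from ``standard comparison with perturbed Barenblatt sub/supersolutions'' or from the qualitative asymptotics in \cite{vazquez1}. The paper gets them by a substantial quantitative argument: first uniform a~priori bounds on the normalized class~$\mathcal C$ (the $L^1$--$L^\infty$ smoothing effect, the Aronson--Caffarelli lower bound \cite{AC83}, the B\'enilan--Crandall--Pierre support upper bound \cite{BCP84}, the Caffarelli--V\'azquez--Wolanski moving-plane roundness and gradient bound \cite{CVW87}); then the Carrillo--Toscani entropy/entropy-dissipation identities \cite{CaTo00}, which give $I(\tilde\rho(s))\le ce^{-2s}$ uniformly on $\mathcal C$; then Poincar\'e plus interpolation with the Lipschitz bound to upgrade this to a pointwise estimate on $\{\tilde\rho>\varepsilon\}$; then a comparison argument to control the ``tail'' region where $\tilde\rho$ is small. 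This yields a preliminary sandwiching with some rate $t^{-\lambda\beta}$, which is already enough to trigger Theorem~\ref{th-main}. The sharp rate $t^{-2\lambda}$ in \eqref{precise} comes only after a further change of variables and the spectral analysis of the linearized operator (Seis \cite{Seis2}), together with the observation that fixing the center of mass kills the $\lambda_{10}=1$ modes.

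Your proposed shortcut for part~(ii)---compactness of the normalized family plus qualitative convergence---is not what the paper does and is not obviously sufficient: the class of nonnegative data of mass~$1$ supported in $B_1$ is not compact, and even after running the flow to a fixed positive time the \emph{uniformity} of the flatness time would still require a quantitative input (continuous dependence of the flatness parameter is not automatic). The paper avoids this by making every estimate along the way explicit and uniform over~$\mathcal C$; that is the real work, and it is what your outline is missing.
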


By scaling and space displacement we can reduce the proof to the case
$M=1$ and $x_0=0$.  We remind the reader that a minimum delay $T_r$ is
needed for general initial data, even under the assumptions of compact
support and smoothness, for the regularity result to hold. Indeed, it
is known that the initial regularity of a solution can be lost for
some intermediate times because of the phenomenon called focusing,
whereby typically a hole in the support of the initial data gets
filled in finite time by the evolving solution, and then $\rho^{m-1}$
is not Lipschitz continuous near the focusing point at the focusing
moment, cf. \cite{AA95, AG93}. But this phenomenon disappears in
finite time for compactly supported solutions as shown in
\cite{CVW87}, and eventually solutions and interfaces are $C^\infty$
smooth as the theorem claims. The asymptotics in part 1 are sharp, see
Seis \cite{Seis2}, who also discusses finer asymptotics.

We conclude this introduction with a conjecture.
\begin{conjecture} Suppose that $\rho$ is a solution to the porous
  medium equation on $[-1,0] \times B_2(0)$. We assume that there is a
  nonempty open cone $C$ so that $\rho(t,x) \le \rho(t,y)$ for $y-x\in
  C$. Then $\rho^{m-1}$ is smooth up to the boundary of the support
  outside the initial support and $\nabla \rho^{m-1} \ne 0$ at the
  free boundary.
\end{conjecture}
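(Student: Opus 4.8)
The plan is to follow the Caffarelli-type flatness-improvement strategy, adapted to the degenerate geometry of the free boundary of the PME. The core of the argument is an iterative scheme: starting from a $\delta$-flat solution in the unit cylinder, one shows that after restricting to a smaller cylinder the solution is $C\delta$-flat with respect to a slightly tilted and translated traveling wave, at an improved (smaller) scale relative to the flatness. Concatenating these improvements across dyadic scales yields, in the limit, a tangent traveling wave at every free boundary point together with quantitative decay of the deviation, which is precisely what produces the $C^\infty$ estimates on $\rho^{m-1}-(x_n+t)$ and the smoothness of the interface and positive level sets. I would organize the proof around three ingredients: (1) a non-degeneracy / regularity toolbox giving interior Hölder estimates for $v=\rho^{m-1}$ and its spatial gradient, and the fact that once $\rho$ is trapped between two close traveling waves its pressure is Lipschitz with $\nabla v$ close to $e_n$ near the free boundary; (2) the linearization of the problem at the flat traveling wave, i.e. writing $v(t,x)=(x_n+t)+w(t,x)$ and tracking the equation satisfied by $w$ in the interior and the Neumann-type condition it inherits at the free boundary; and (3) the decay estimate for the linearized problem, which is the degenerate parabolic operator that replaces the heat equation in the classical theory.

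The key step in detail. First I would invoke Definition \ref{def.flat} and the scaling reduction already performed in the excerpt to work with $\rho_{tw,-\delta}\le\rho\le\rho_{tw,\delta}$ on $Q_{1,1}(0,0)$; in particular the free boundary lies in the slab $\{|x_n+t|\le\delta\}$ and the positive-side pressure satisfies $|v-(x_n+t)|\le C\delta$ there by comparison. Next, using the parabolic-interior regularity for the pressure equation $\dt v=(m-1)v\lapx v+|\gradx v|^2$ away from $\{v=0\}$, together with the non-degeneracy $\partial_n v\sim 1$ inherited from the trapping, I would derive uniform Hölder bounds for $\gradx v$ on the region where $x_n+t\gtrsim\delta$, and show that the free boundary is, on scale $1$, a Lipschitz graph $x_n=-t+\varphi(t,x')$ with $\|\varphi\|\le C\delta$ — this is where I would lean on the previously established results of \cite{CVW87,Koc99} and \cite{Kienz13} for flat solutions. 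The decisive lemma is then the flatness improvement (the analogue of Proposition \ref{improves}): there is $\theta\in(0,1)$ such that a $\delta$-flat solution on scale $1$ is, after the substitution $v-(x_n+t)=:w$ and a compactness/linearization argument, $\theta\delta$-close on scale $\theta$ to the traveling wave $\rho_{tw}(\,\cdot\,;{\bf a}',1,d')$ for some new direction ${\bf a}'$ and displacement $d'$ with $|{\bf a}'-e_n|+|d'|\le C\delta$. One proves this by contradiction: if it failed along a sequence $\delta_j\to 0$, the rescaled deviations $w_j/\delta_j$ would converge to a solution of the limiting degenerate equation (the linearized PME flow about the half-space traveling wave, with a Neumann condition on $\{x_n+t=0\}$), and the decay estimates for that linear problem — which must be established separately as a self-contained analytic step — would contradict the supposed failure of improvement.

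Iterating the improvement lemma on the nested cylinders $Q_{\theta^k,1}$ and summing the geometric series $\sum \theta^k\delta$ gives convergence of the tilts ${\bf a}^{(k)}$ and displacements $d^{(k)}$ to a limiting tangent wave at each free boundary point, with $|v-(x_n+t)|$ and all its derivatives controlled by $C\delta$ on $[-1/2,0]\times\overline{B(0,1/2)}$; standard bootstrapping of the (now uniformly parabolic, non-degenerate) pressure equation up to the $C^{1,\alpha}$ free boundary then upgrades this to the full $C^\infty$ estimates $|\dt^k\dx^\alpha\gradx(v-(x_n+t))|\le C\delta$ and to smoothness of the interface and the positive level sets, via the hodograph/Legendre transform that straightens the free boundary. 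I expect the main obstacle to be step (3): identifying the correct limiting linear operator and proving the decay (Schauder-type) estimates for it. The operator is degenerate at the free boundary — it behaves like $(x_n+t)\,\partial_n^2$ transverse to the front — so neither harmonic nor caloric estimates apply, and one needs a tailored weighted-space theory (as emphasized in the remark after Theorem \ref{th-main}); getting a clean improvement-of-flatness constant $\theta$ uniform in the allowed parameter range, and matching the boundary condition for $w$ with the geometric correction coming from tilting the wave, is where the real work lies.
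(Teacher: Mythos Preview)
The statement you were asked to prove is the \emph{Conjecture}, not Theorem~\ref{th-main}. The paper does not prove this conjecture; it is stated explicitly as an open problem at the end of the introduction. Your proposal announces itself as a ``proof proposal for Theorem~\ref{th-main}'' and indeed addresses only that theorem: every step you outline---the trapping $\rho_{tw,-\delta}\le\rho\le\rho_{tw,\delta}$, the improvement-of-flatness iteration, the tangent traveling wave at the boundary---starts from the $\delta$-flatness hypothesis of Theorem~\ref{th-main}, not from the cone-monotonicity hypothesis of the Conjecture. Nowhere do you use, or even mention, the condition that $\rho(t,x)\le\rho(t,y)$ for $y-x\in C$.

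The gap is therefore not a technical lacuna but a wholesale mismatch of hypotheses. Cone monotonicity is a much weaker assumption than $\delta$-flatness for small $\delta$: it says nothing a priori about the size of $|\nabla\rho^{m-1}|$ near the free boundary, nor does it give a trapping between nearby traveling waves. To go from the Conjecture to a point where Theorem~\ref{th-main} could be invoked, one would need to show that cone monotonicity alone forces non-degeneracy ($|\nabla\rho^{m-1}|$ bounded below at the free boundary) and then flatness at small scales---and that is precisely the content of the Conjecture that remains open. The results of \cite{CVW87,Koc99} obtain this only under an additional non-degeneracy assumption on the initial data, which the Conjecture deliberately omits.

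As a side remark, even viewed purely as a sketch for Theorem~\ref{th-main}, your approach to the improvement-of-flatness step differs from the paper's. You propose a compactness/contradiction argument: blow up, pass to the linearized degenerate equation, and use its decay estimates. The paper instead gives a direct, constructive proof of Proposition~\ref{improves}: it builds global comparison solutions $\rho_\pm$ whose initial data coincide with $\rho$ on most of a ball, applies the Gaussian estimates of Proposition~\ref{difference} to control the separation $\rho_+-\rho_-$, and then reads off the improved flatness from the Taylor expansion of $\rho_-^{m-1}$ at a free boundary point (using the local regularity of Proposition~\ref{regularity}). Both routes need the degenerate linear theory, but the paper's direct argument avoids the compactness step and produces explicit constants.
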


\subsection{Outline}
We gather in Section \ref{basicprops} the precise statements of the
main propositions that form the basis of the proof of Theorem
\ref{th-main}, we state and prove the main body of the proof of the
theorem in Sections \ref{sec.prop1} and \ref{proofthm}, and then we
prove the list of auxiliary propositions in the later
sections.  We derive Theorem \ref{thm2} in Section
\ref{sec.thm2} after the detailed quantitative analysis of the
evolution of global solutions and their interfaces  is  done.

We will use the following rather standard notations for the space-time
cylinders that appear, $Q_r(t_0,x_0)=(t_0-r, t_0)\times B_r(x_0)$ and
$Q_r=Q_r(0,0)$. The open space ball of center $x_0\in \Rn$ and radius
$r>0$ will be denoted by $B_r(x_0)$.


\section{Idea of the proof. Basic Propositions}
\label{basicprops}

The key step in the proof of Theorem \ref{th-main} is the following
self-improvement result.

\begin{proposition}[Improvement of flatness]\label{improves} There
  exist
  $\delta_0$ and $r$,
  \[ 0 < 2\delta_0 < r < \frac12
  \]
such that if $\rho $ is a solution  of the PME on $Q_1$ with $\rho(0,0)=0$,
 $(0,0)$ lies in the boundary  of the support, and $\rho $ is  $ \delta $-flat
with velocity $1$ and scale $1$ at $(0,0)$
 with a $\delta < \delta_0 $, then there exist $\Lambda>0$ and a unit vector
  $\mathbf{a}$ which satisfy
\[
|1-\Lambda | \le c\frac{\delta}{r}, \qquad |\mathbf{a} -e_n| \le c \sqrt{\delta}{r}\,,
\]
such that   the rescaled solution

\begin{equation}
\tilde \rho  (t,x) =  r^{-\frac1{m-1}}  \rho\Big(\Lambda^2 r t ,  \Lambda  rt  x \Big)
\end{equation}
 is $\delta/2$-flat with velocity $1$ and scale $1$ at $(0,0)$,  in direction $\mathbf{a}$.
\end{proposition}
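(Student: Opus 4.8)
The plan is to follow the Caffarelli-style improvement-of-flatness scheme, linearizing the porous medium equation together with the free boundary around the model traveling wave $\rho_{tw}(t,x;e_n,1,0) = (t+x_n)_+^{1/(m-1)}$. Write $v = \rho^{m-1}$ for the pressure, so the $\delta$-flatness hypothesis says $(t+x_n-\delta)_+ \le v(t,x) \le (t+x_n+\delta)_+$ on $Q_1$, and set $w \ldef (v - (t+x_n))/\delta$ on the positivity set. The equation $\dt v = (m-1)v\lapx v + |\gradx v|^2$ becomes, after subtracting the equation satisfied by $t+x_n$ (which is $\dt(t+x_n) = 1 = |\gradx(t+x_n)|^2$ there) and dividing by $\delta$, a linear equation for $w$ of the form $\dt w = (m-1)(t+x_n)\lapx w + 2\dxn w + O(\delta)(\text{lower order})$; this is a degenerate parabolic operator, degenerate exactly at the flat free boundary $\{t+x_n=0\}$, and it is precisely the "nonstandard degenerate equation" the authors flag in the introduction. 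The first substantial step is a compactness/$C^{1,\alpha}$-type estimate: a sequence of $\delta$-flat solutions with $\delta \to 0$ produces, after passing to a limit, a bounded solution $w_\infty$ of this degenerate equation in the half-cylinder $\{t+x_n>0\}$, with appropriate (Neumann-type, coming from Darcy's law) behavior at the degeneracy locus.

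Second, I would use the explicit structure of this limiting degenerate equation to derive a decay/expansion estimate: the limit solution $w_\infty$, on a smaller cylinder of radius $r$, is within $O(r^{1+\alpha})$ (or at least within $cr$, which suffices here given the factor $\delta/2$ target versus $c\delta/r$ error budget) of an affine function of $(t,x)$ of the form $A t + \langle b, x\rangle + e$ corresponding to an infinitesimal change of speed and direction of the traveling wave. This is where one needs a Liouville-type or Schauder-type statement for the degenerate operator — the replacement for "harmonic/caloric approximation." Concretely, the affine correction translates into: the speed should be corrected to $\Lambda = 1 + \delta A + o(\delta)$ and the direction to $\mathbf{a} = e_n + \delta b' + o(\delta)$ (suitably normalized to a unit vector, which explains the $\sqrt\delta$ rather than $\delta$ loss in the direction estimate — the unit-vector constraint forces the tangential components to scale like the square root when one measures the deviation of the vector itself).

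Third, I transfer this back to the nonlinear problem: for $\delta < \delta_0$ small enough, the genuine solution $v$ is $C^0$-close (on scale $r$) to the affinely-corrected traveling wave $\rho_{tw}(\cdot;\mathbf{a},\Lambda,0)^{m-1}$ up to an error $\le (\delta/2)\cdot(\text{new scale})$, which is exactly the statement that the rescaled solution $\tilde\rho(t,x) = r^{-1/(m-1)}\rho(\Lambda^2 r t, \Lambda r t\, x)$ — note the parabolic rescaling by $r$ compensating for the traveling-wave pressure scaling, combined with the boost to speed $\Lambda$ and rotation to direction $\mathbf{a}$ — is $\delta/2$-flat with velocity $1$ and scale $1$ at $(0,0)$. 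One checks the constraint $0 < 2\delta_0 < r < 1/2$ is consistent: $r$ is fixed first (depending only on the decay exponent $\alpha$ and the universal constants of the degenerate equation, so that $c r^\alpha \le 1/2$, say), and then $\delta_0$ is chosen small relative to $r$ so that the nonlinear error terms $O(\delta^2/r^?)$ and the $o(\delta)$ remainders from the compactness step are absorbed.

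The main obstacle — and the place where, as the authors warn, one cannot simply quote the classical Caffarelli machinery — is establishing the two ingredients for the degenerate limit operator $\dt w = (m-1)(t+x_n)\lapx w + 2\dxn w$: (a) the compactness estimate (a uniform-in-$\delta$ modulus of continuity, or better a uniform $C^{1,\alpha}$-type bound up to the degenerate boundary) for solutions of the perturbed degenerate equation, and (b) the quantitative decay of solutions of the limit equation toward affine functions on shrinking cylinders, together with the correct boundary condition at $\{t+x_n=0\}$ encoding the free boundary motion. Both of these require a tailored analysis of this specific degenerate parabolic operator (weighted Sobolev spaces adapted to the weight $t+x_n$, barrier constructions respecting the degeneracy), which I expect to occupy the bulk of the technical work and is presumably the content of the auxiliary propositions proved in the later sections.
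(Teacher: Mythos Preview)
Your proposal is a plausible strategy but it is \emph{not} the route the paper takes, and your guess about what the auxiliary propositions do is off. You outline the standard Caffarelli/De~Silva scheme: normalize $w=(v-(t+x_n))/\delta$, extract a subsequential limit as $\delta\to 0$ solving the degenerate linear equation $\partial_t w=(m-1)(t+x_n)\Delta w+2\partial_n w$, prove affine decay for this limit, and transfer back. The paper explicitly says it ``considerably deviates'' from this argument, and indeed it never runs a compactness/blow-up step at all.

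What the paper does instead is a direct comparison argument. At an intermediate time $t_1$ it constructs initial data $\rho_{\pm,0}$ that (i) agree with $\rho(t_1,\cdot)$ in $B_{1/2}$ away from a thin strip near the free boundary, (ii) match translated traveling waves for $|x|\ge 3/4$, and (iii) have pressure gradient uniformly $\mu$-close to $e_n$. Condition (iii) puts the global Cauchy problem in the regime of Proposition~\ref{prop_global} (Kienzler's global-in-space result), so the resulting global solutions $\rho_\pm$ are smooth with full derivative control. Gaussian-type difference estimates (Proposition~\ref{difference}) then show two things: $\rho_\pm$ stay between the original bounding traveling waves on $\partial B_1$, so by comparison $\rho_-\le\rho\le\rho_+$ on $Q_1$; and $\rho_+(t,x)\le\rho_-(t,x+\tau\delta e_n)$ with $\tau$ small. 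Finally, since $\rho_-$ is smooth up to its free boundary (Proposition~\ref{regularity}), one Taylor-expands $\rho_-^{m-1}$ at the free-boundary point $(0,he_n)$, reads off $V=|\nabla\rho_-^{m-1}|$ and $\mathbf{a}$ from the gradient there, and the second-order remainder bound plus the $\tau\delta$ gap give the $\delta/2$-flatness after rescaling by $r$.

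So the contrast is: your scheme needs (a) compactness for the normalized family and (b) a Liouville/Schauder theory for the degenerate limit operator with the correct free-boundary condition; the paper replaces both by sandwiching $\rho$ between two \emph{smooth nonlinear} comparison solutions whose regularity comes from a separate global theorem, and then does a single Taylor expansion. The auxiliary propositions (global regularity for globally-flat data, Gaussian difference bounds, local nondegenerate estimates) are tailored to that route, not to the compactness argument you anticipate.
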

To establish  these facts we prove and then use  a number of regularity results:
\begin{itemize}
\item  Proposition \ref{prop_global} for global solutions to the PME with initial  pressure which is Lipschitz  close to a traveling front,
\item  the decay estimate from Proposition \ref{difference} - a consequence of a Gaussian
kernel estimate,
\item  the local regularity results Proposition \ref{regularity} for non-degenerate parabolic equations and non-degenerate local solutions to the porous medium equations. The estimates there are more precise than the ones of the theorems.
\end{itemize}
Together these propositions allow to prove  the $\delta$-improvement of Proposition
\ref{improves}.
 From there, standard and easier arguments yield
$C^{1,\alpha}$ regularity of the boundary of the support, and the
pressure at the boundary, and a lower bound on the velocity $\nabla \rho^{m-1}$ at
the free boundary. After that, full regularity follows from Proposition \ref{regularity}.

\medskip


The first auxiliary result  we have mentioned deals with solutions of the PME, equation \eqref{PME}, that are global in space on a time interval $ (t_1,t_2) \subset \R $ with finite $ t_1<t_2$.

\begin{proposition} \label{prop_global}
 There exists $ \mu > 0 $ such that we have: \\
	If $ \rho_0: \Rn \to [0,\infty) $ satisfies
\begin{equation} \label{mucond}
\sup_{\Pos(\rho_0)} \left|\gradx \left( \rho^{m-1}_0-(x_n +t_1)\right) \right| \leq \mu
\end{equation}
	and if  $ \rho $ is the solution of $ (PME) $ on $ (t_1,t_2) \times \Rn $ with initial data $ \rho_0 $
at $t=t_1$, then for  $ k \in \N_0 $ and $ \alpha \in \N_0^n $
	\begin{equation} \label{Lipschitz}
		\sup_{\mathcal{P}(\rho)} (t-t_1)^{k+|\alpha|} \left|\dt^k \dx^\alpha \gradx \left( \rho^{m-1}-(x_n+t) \right)\right| \le C_1 \sup_{\mathcal{P}(\rho_0)} \left|\gradx \left( \rho^{m-1}_0-(x_n +t_1)\right) \right|
	\end{equation}
with $ C_1 = C_1(n,m,k,\alpha) > 0 $.
\end{proposition}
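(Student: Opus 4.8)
The plan is to treat \eqref{mucond} as a smallness condition that makes the pressure equation uniformly parabolic along the solution, and then bootstrap to all derivatives. First I would introduce the perturbation unknown $w \ldef \rho^{m-1} - (x_n + t)$, so that $w$ vanishes on the parabolic boundary condition side (in the sense that $|\gradx w| \le \mu$ at $t=t_1$) and satisfies, on the positivity set $\mathcal P(\rho)$, a quasilinear parabolic equation obtained by inserting $v = \rho^{m-1} = x_n + t + w$ into $\dt v = (m-1)v\,\lapx v + |\gradx v|^2$. Concretely $\dt w = (m-1)(x_n+t+w)\,\lapx w + |e_n + \gradx w|^2 - 1$; since the zeroth-order coefficient $(m-1)(x_n+t+w)$ equals $(m-1)v = (m-1)\rho^{m-1}$, which is nonnegative and strictly positive in the interior of $\mathcal P(\rho)$, and the first-order part is a smooth function of $\gradx w$, this is a (degenerate at the free boundary) parabolic equation whose degeneracy is exactly the known PME degeneracy. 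The key structural point is that on the region where $\rho > 0$ the equation is genuinely parabolic, and the free boundary, being a level set $\{v = 0\} = \{x_n + t + w = 0\}$, is — because $|\gradx w|$ is small — a Lipschitz graph $x_n = -t + O(\mu)$ with nonvanishing spatial gradient of $v$, hence a non-characteristic boundary along which we have good control.

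Second, I would establish the base case $k = \alpha = 0$ of \eqref{Lipschitz}, i.e. a Lipschitz estimate $\sup_{\mathcal P(\rho)} |\gradx w(t)| \le C \mu$ propagating the initial bound for all $t > t_1$ with constant $C$ independent of $t - t_1$. This is where I expect to invoke the results from \cite{Kienz13} on globally flat solutions: the hypothesis \eqref{mucond} is precisely a global $\mu$-flatness-type condition in Lipschitz form, and the expected mechanism is a maximum-principle / energy argument for the gradient $\gradx w$ — differentiate the $w$-equation in $x_j$, obtain a parabolic equation (or inequality) for $\di w$ with the degenerate coefficient $(m-1)v$ in front, and apply a comparison argument using traveling-wave barriers (exactly the sandwiching in Definition \ref{def.flat}, which \eqref{mucond} implies at $t=t_1$ and which is then propagated by the comparison principle). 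The point is that flatness does not deteriorate in time for global-in-space solutions; it is in fact improved by the parabolic smoothing, which gives the factor-free (i.e. $(t-t_1)^0$) estimate in the base case.

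Third, with the base Lipschitz bound in hand, the region $\{v \ge \eta\}$ for small $\eta > 0$ is uniformly parabolic with smooth coefficients depending on $\gradx w$, and I would apply interior parabolic Schauder / bootstrap estimates (the quantitative local regularity of Proposition \ref{regularity}) to get, for $t \ge t_1 + \tau$, interior bounds $(t-t_1)^{k+|\alpha|}|\dt^k\dx^\alpha \gradx w| \le C\mu$ away from the free boundary, the powers of $(t-t_1)$ being the standard parabolic scaling weights that absorb the lack of initial smoothness of $\rho_0$. To extend these estimates up to the free boundary, I would straighten the boundary: since $\{v = 0\}$ is the graph $x_n = g(t,x')$ with $g$ having small $C^{0,1}$ norm, change variables $y_n = v = x_n + t + w$ (the hodograph/von Mises-type transform that is standard in PME regularity, cf. \cite{Koc99, ArVaz87}), under which the free boundary becomes the fixed hyperplane $\{y_n = 0\}$ and the transformed equation, evaluated as $y_n \to 0$, becomes uniformly parabolic with an oblique-type structure inherited from Darcy's law $v_t = |\gradx v|^2$ — which near the front reads $\dt w \approx 2\di[n] w$ at leading order, i.e. a transport-type boundary relation compatible with parabolic regularity. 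The hard part — and the step I would flag as the main obstacle — is precisely this boundary regularity: making the hodograph transform rigorous under only a Lipschitz (not yet $C^{1,\alpha}$) bound on $w$, and showing the transformed degenerate-parabolic problem admits Schauder estimates up to $\{y_n=0\}$ with the claimed weights and with constant $C_1$ independent of the time of existence. Once the transformed problem is set up, the remaining bootstrap to all $k, \alpha$ is routine differentiation of a uniformly parabolic equation, and undoing the transform transfers the bounds back to $\mathcal P(\rho)$ with the stated constants.
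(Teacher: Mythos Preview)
The paper does not actually prove Proposition~\ref{prop_global}; immediately after the statement it simply records that ``the statement is the main result of Kienzler \cite{Kienz14}'' and uses it as a black box. So there is nothing in the paper to compare your argument against line by line. That said, the machinery of \cite{Kienz14} is visible in Sections~\ref{sec.vonmises}--7 of the present paper (which prove the \emph{local} analogue, Proposition~\ref{regularity}), and your outline matches the overall architecture: pass to the perturbation of the pressure, perform the von~Mises/hodograph transform $(t,x',x_n)\mapsto(t,x',\rho^{m-1})$ so that the free boundary becomes $\{y_n=0\}$, and prove regularity for the transformed quasilinear equation \eqref{TPE} up to $y_n=0$.

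Two substantive remarks. First, your step~2 is circular as written: you propose to obtain the base Lipschitz propagation by ``invoking the results from \cite{Kienz13} on globally flat solutions,'' but \cite{Kienz13,Kienz14} \emph{are} Proposition~\ref{prop_global}. The actual mechanism is not a maximum principle for $\partial_j w$; rather, one works directly in the transformed picture \eqref{TPE}--\eqref{TPE_inhom} and closes a fixed-point/perturbation argument around the linear degenerate operator $L_\sigma u = y_n\Delta_y u + (1+\sigma)\partial_n u$. Second, the analytic engine up to the boundary is not Schauder theory but a Calder\'on--Zygmund/$L^p$ theory for $L_\sigma$ in the intrinsic (Carnot--Carath\'eodory) geometry of the half-space --- this is Lemma~\ref{CZ} in the paper, quoted from \cite{Kienz14}. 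That lemma gives $\Vert D_{s,y}w\Vert_{L^p}+\Vert y_n D_y^2 w\Vert_{L^p}\lesssim \Vert f\Vert_{L^p}$ directly, with no need for a preliminary $C^{1,\alpha}$ bound on the free boundary; smallness of $\mu$ lets one absorb the quasilinear coefficients $a^{ij}[\nabla u]-\delta^{ij}$ into the left-hand side, and then one bootstraps by differentiating \eqref{TPE_coeff}. So the ``main obstacle'' you flag --- making the hodograph transform rigorous under only Lipschitz control and getting boundary estimates --- is resolved not by oblique-derivative Schauder theory but by this weighted singular-integral machinery. Your strategic picture is right; the missing ingredient is identifying $L^p$ theory for $L_\sigma$ (rather than Schauder) as the correct linear tool.
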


\noindent

 Since the equation is invariant under time translations, this is equivalent to restricting to  $t_1=0$. The statement is the main result of Kienzler \cite{Kienz14}. We announce that we will
take $ \mu $ equal to the constant of Proposition \ref{prop_global}
throughout the whole paper.

The next statement gives a pointwise control of differences of the
graphs of the pressures of two solutions assuming uniform control on
the gradients of the pressure. After a change of coordinates in
dependent and independent variables, this is a consequence of Gaussian
estimates of the second author \cite{Koc99}.

\begin{proposition} \label{difference}
Let $\rho$ and $\tilde \rho$ be global solutions to the PME on $ (0,\infty) \times \Rn $ with initial data $\rho_{0}$ and $\tilde \rho_0$ which satisfy \eqref{mucond}.

 {\rm (i)} Suppose that
\[
	|\rho^{m-1}_{0}-\tilde \rho^{m-1}_{0}| \le \delta \text{ for all } x \in \Rn\,,
\]
and  let $R \ge \delta $, $ 0 < t \leq R $ and $ x\in \R^n$. Then there exists $ c = c(n,m) $ such that for
\[
	a \ldef  c\Big\{ \delta e^{-\frac{R^2}{C(R+\rho_0^{m-1}(x))t}} +
   R^{-n} (R+\rho_0^{m-1}(x))^{\frac{m-2}{m-1}}
\int_{B_{R}(x)} |\rho_{0}(y)-\tilde \rho_{0}(y)| dy
+ \delta \left( \frac{\delta}{t}\right)^{\frac1{m-1}} \Big\}
\]
we have the pointwise comparison
\begin{equation} \label{comp.2.4}
\tilde \rho(t,x) \le \rho(t,x+a e_n) .
\end{equation}

 {\rm (ii)} If moreover
\[ \rho_0(y)= \tilde \rho_0(y) \quad \text{ for }  y \in B_R(x)\,,
\]
 then the conclusion \eqref{comp.2.4} holds with
\begin{equation} \label{aexp}
a= c\delta  \left(\frac{\delta}t\right)^{\frac1{m-1}}
e^{- \frac{R^2}{C(R+\rho^{m-1}_0(x)) t}  }.
\end{equation}
\end{proposition}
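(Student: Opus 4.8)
The plan is to reduce the pointwise comparison to a linear estimate by a change of both dependent and independent variables that flattens the (nearly flat) free boundaries, and then to feed the resulting degenerate parabolic operator into the Gaussian bounds of \cite{Koc99}. First I would apply Proposition \ref{prop_global} to each of $\rho$ and $\tilde\rho$: since the initial pressures satisfy \eqref{mucond}, for every $t>0$ both pressures $v=\rho^{m-1}$, $\tilde v=\tilde\rho^{m-1}$ are $C^\infty$ up to their free boundaries, all derivatives of $v-(x_n+t)$ and $\tilde v-(x_n+t)$ are $O(\mu)$, and both free boundaries are uniformly smooth graphs $x_n=\varphi(t,x')$, $x_n=\tilde\varphi(t,x')$ that are $C^1$-close to $x_n=-t$. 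In particular $\partial_{x_n}v$ and $\partial_{x_n}\tilde v$ are comparable to $1$ near the front, so converting an estimate for the pressure difference into the asserted horizontal shift by $a\,e_n$ of $\rho$ is, to leading order, lossless — which is why the conclusion is stated as a shift (and also why $\rho(t,\cdot+ae_n)$ stays an honest PME solution, so the very last step can invoke the ordinary PME comparison principle rather than a linearization). The constraints $R\ge\delta$, $0<t\le R$ are what keep the shift $a$ small and the perturbation below valid.

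Next I would pass to the variables $(t,x',v)$, i.e. use the pressure as $n$-th coordinate in place of $x_n$ (a partial hodograph transform in the $x_n$-direction). In these variables the positivity set of each solution becomes the fixed half-space $\{v>0\}$, each free boundary becomes $\{v=0\}$, and the pressure equation $\dt v=(m-1)v\,\lapx v+|\gradx v|^2$ turns into a quasilinear equation that is uniformly parabolic for $v>0$ and degenerates at $\{v=0\}$ in exactly the edge/Grushin-like fashion treated in \cite{Koc99}. Writing $G,\tilde G$ for the two solutions in these coordinates, their difference $U=\tilde G-G$ solves a \emph{linear} equation $\dt U=\mathcal L U$ with $\mathcal L$ a perturbation — of relative size $O(\mu)$ by \eqref{mucond} and Proposition \ref{prop_global}, in particular with $O(\mu)$ zeroth-order term — of the model operator obtained by linearizing about the traveling front $\rho_{tw}$. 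The key input is that the fundamental solution $p(t,z;0,y)$ of the model operator (hence, up to the $O(\mu)$ perturbation, of $\mathcal L$) satisfies two-sided Gaussian bounds $p(t,z;0,y)\le(\text{volume factor})\,\exp(-d(z,y)^2/(Ct))$ with respect to the natural distance, Euclidean in $x'$ and square-root-degenerate in the $(x_n,v)$-edge directions; this is the content of \cite{Koc99}.

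From the representation formula $U(t,z)=\int p(t,z;0,y)\,U_0(y)\,dy$ (the lower-order and genuinely nonlinear remainders being absorbed by a Gronwall/perturbation step), I would split the integral into the near region $B_R(x)$ and the far region $\Rn\setminus B_R(x)$. On the far region one uses $|U_0|\le\delta$ together with the fact that, in the edge distance, $\mathrm{dist}(x,\Rn\setminus B_R(x))$ is comparable to $R/\sqrt{R+\rho_0^{m-1}(x)}$, which produces the first term $\delta\,e^{-R^2/(C(R+\rho_0^{m-1}(x))t)}$ of $a$. On the near region, integrating $|U_0(y)|=|\rho_0^{m-1}(y)-\tilde\rho_0^{m-1}(y)|$ against the on-diagonal bound for $p$ — whose volume factor $R^{-n}(R+\rho_0^{m-1}(x))^{(m-2)/(m-1)}$ is precisely the volume of the corresponding metric ball in the degenerate geometry — yields the averaged term $R^{-n}(R+\rho_0^{m-1}(x))^{(m-2)/(m-1)}\int_{B_R(x)}|\rho_0-\tilde\rho_0|$; in case (ii) this term is simply absent since $U_0\equiv0$ on $B_R(x)$. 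The remaining term $\delta(\delta/t)^{1/(m-1)}$ records the short-time cost of passing from the pressure estimate to the horizontal shift across the degenerate boundary (equivalently, of the initial layer of the hodograph change of variables), the exponent $1/(m-1)$ being the scaling exponent between $\rho$ and $\rho^{m-1}$. In part (ii) the surviving contribution is essentially the product of the far-region Gaussian factor and this short-time correction, which is exactly \eqref{aexp}.

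I expect the main obstacle to be two-fold. First, setting up the hodograph change of variables rigorously and checking that the resulting operator genuinely belongs to the class for which \cite{Koc99} supplies Gaussian bounds: this needs uniform control of its coefficients all the way up to $\{v=0\}$ (provided by Proposition \ref{prop_global}) and some care in comparing across the two a priori distinct free boundaries of $\rho$ and $\tilde\rho$. Second, keeping the whole argument \emph{one-sided}: since only an upper bound (a shift) is wanted, I would run a barrier/maximum-principle argument rather than a two-sided $L^\infty$ estimate, which forces one to build explicit supersolutions of $\mathcal L$ from the Gaussian kernel and to absorb the $O(\mu)$ zeroth-order term without degrading the Gaussian constant $C$.
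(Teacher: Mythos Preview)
Your strategy matches the paper's: von Mises (hodograph) transform to the fixed half-space $\{y_n>0\}$, a linear divergence-form degenerate parabolic equation for the difference, Gaussian kernel bounds in the intrinsic geometry (the paper reproves these as Lemma~\ref{Gaussian} via Davies--Gaffney plus Moser iteration rather than simply quoting \cite{Koc99}), and a near/far split of the representation integral.

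Two of your anticipated obstacles dissolve on closer inspection. First, the difference equation in von Mises coordinates is in \emph{pure divergence form} with no zeroth-order term and no nonlinear remainder: the coefficients depend on both $w_1$ and $w_2$, but the equation is exactly linear in $w=w_2-w_1$, so there is nothing to absorb and no Gronwall step. Second, because the transform sends \emph{both} free boundaries to $\{y_n=0\}$ and because $w$ records the $x_n$-position of each pressure level set, the two-sided pointwise bound $|w_2-w_1|\le a$ \emph{is} already the shift statement $\tilde\rho(t,x)\le\rho(t,x+ae_n)$ --- no one-sided barrier construction is needed, and there is no issue of ``comparing across distinct free boundaries''. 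Finally, the term $\delta(\delta/t)^{1/(m-1)}$ arises not from converting pressure to shift but from the strip $\{y_n\lesssim\delta\}$ of the near-region integral, computed against the weighted measure $y_n^\sigma\,dy$ with $\sigma=\frac{2-m}{m-1}$.
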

Note that the first part of Proposition \ref{difference} is only
useful for $ t \geq \delta $.  We also need a local regularity
statement under non-degeneracy conditions.

\begin{proposition} \label{regularity}
       	There exist $ \delta_5 > 0 $ and $\kappa_5 > 0 $ such that the following holds  if $ \rho$ is a $ \delta $-flat solution of $ (PME) $ on $(0,1) \times B_2(0) $ for a $ \delta < \delta_5 $:
\begin{itemize}
\item[(i)]  \label{deltaest}   We have	
\[
	\left|   \partial_t^k \partial^\alpha_x  \left(\rho^{m-1}(t,x)  -(x_n+t) \right) \right| \le C_5 \delta  \Big[ t^{-(k+\frac{|\alpha|}2)}(t+x_n)^{-\frac{|\alpha|}2} + (t+x_n)^{-k-\alpha} \Big]\,.
\]
if $ x_n+t \ge \kappa_5 \delta $ and $ (t,x) \in (0,1) \times B_1(0) $.

\medskip
\item[(ii)]  \label{SecondDerivatives}
If $(0,0)$ is contained in the free boundary, if
\[
\sup_{\Pos(\rho)\cap(-1,0)\times B_2(0)} \left|\gradx \left( \rho^{m-1}-(x_n +t_1)\right) \right| \leq \mu\,,
\]
then for   $ k \in \N_0 $ and $ \alpha \in \N_0^n $,
\[
  \sup_{((0,1] \times B_1(0))\cap \mathcal{P}(\rho)}  \left|\dt^k \dx^\alpha \left( \rho^{m-1}-(x_n+t) \right)\right| \le
C \delta
t^{\frac12-(k+\frac{|\alpha|}2)} (t+ \rho^{(m-1)})^{\frac12-\frac{|\alpha|}2}.
\]
\end{itemize}
 \end{proposition}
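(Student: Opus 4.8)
The plan is to pass to the pressure $v\ldef\rho^{m-1}$, which solves $\dt v=(m-1)\,v\,\lapx v+|\gradx v|^2$, and to the deviation $w\ldef v-(x_n+t)$, which solves the quasilinear equation
\[
 \dt w=(m-1)\,v\,\lapx w+2\,\dxn w+|\gradx w|^2.
\]
The only facts about flatness that will be used are the pointwise consequences $|w|\le\delta$ and $v\ge(x_n+t-\delta)_+$ on $\Pos(\rho)$, together with $v=0$ off $\Pos(\rho)$.

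\emph{Part (i): an intrinsic rescaling.} Fix $(t_*,x_*)\in(0,1)\times B_1(0)$ with $\sigma\ldef(x_*)_n+t_*\ge\kappa_5\delta$; then $v\ge(\kappa_5-1)\delta$ there. I would set $L\ldef c_0\min\{\sqrt{\sigma t_*},\,\sigma\}$ and $T\ldef L^2/\sigma$, and rescale $\hat w(s,y)\ldef\delta^{-1}w(t_*+Ts,\,x_*+Ly)$ on the unit cylinder $Q_1$. A direct computation gives
\[
 \ds\hat w=\hat a\,\lapy\hat w+\hat b\cdot\grady\hat w+\tfrac{\delta}{\sigma}\,|\grady\hat w|^2,\qquad \hat a=\tfrac{m-1}{\sigma}\,v,\quad|\hat b|\le 2L/\sigma,
\]
and for $c_0$ small and $\kappa_5$ large one has $\hat a\in[\tfrac12(m-1),2(m-1)]$, $|\hat w|\le1$ on $Q_1$, and the pre-image cylinder $(t_*-T,t_*]\times B_L(x_*)$ sits inside $(0,1)\times B_2(0)$. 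Thus $\hat w$ solves a uniformly parabolic equation with bounded coefficients and a harmless quadratic gradient term; De Giorgi--Nash--Moser for H\"older continuity, then Schauder and the standard bootstrap of the smooth nonlinearity and of the coefficient $\hat a$ (which depends only on $\hat w$ and on $x_n+t$), give $|\ds^k\dy^\alpha\hat w(0,0)|\le C_{k,\alpha}$. Undoing the scaling, $|\dt^k\dx^\alpha w(t_*,x_*)|\le C_{k,\alpha}\,\delta\,\sigma^k L^{-2k-|\alpha|}$; inserting $L=c_0\sqrt{\sigma t_*}$ when $t_*\le\sigma$, and $L=c_0\sigma$ when $t_*>\sigma$, produces respectively the two summands $t^{-(k+|\alpha|/2)}(t+x_n)^{-|\alpha|/2}$ and $(t+x_n)^{-(k+|\alpha|)}$ of the asserted bound.

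\emph{Part (ii): up to the free boundary.} Now $(0,0)\in\Gamma$, so $w(0,0)=0$, and in addition $|\gradx w|\le\mu$ on $\Pos(\rho)\cap(-1,0)\times B_2(0)$. First I would globalize: for a fixed $t_1'\in(-1,0)$ extend the slice $\rho(t_1',\cdot)$ to a datum on $\ren$ that equals $\rho(t_1',\cdot)$ on $B_{3/2}(0)$ and the traveling wave $(x_n+t_1')_+^{1/(m-1)}$ far out, keeping the gradient bound $\le\mu$; let $\tilde\rho$ be the global solution given by Proposition \ref{prop_global}. By uniqueness and finite propagation $\tilde\rho$ coincides with $\rho$ on a neighbourhood of $(0,0)$ in $(t_1',0]\times B_1(0)$, and there $|v-\tilde v|$ is dominated by the exponentially small error of Proposition \ref{difference}(ii), which is negligible. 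Proposition \ref{prop_global} then hands us, for $\tilde\rho$, that $w$ is $C^\infty$ up to the free boundary with $(t-t_1')^{k+|\alpha|}|\dt^k\dx^\alpha\gradx w|$ bounded — a priori only by $C_1\mu$. The remaining task is to bring the constant down to $C\delta$ and restore the degenerate weights: I would combine the $L^\infty$-smallness $|w|\le\delta$, the vanishing $w(0,0)=0$, and the derivative bounds just obtained; interpolation between $\|w\|_\infty\le\delta$ and bounded higher derivatives forces $\|\gradx w\|$ to be a power of $\delta$ arbitrarily close to $\delta$, so that the quadratic term $|\gradx w|^2$ becomes a genuinely negligible inhomogeneity and $w$ obeys the linear weighted parabolic estimates underlying Proposition \ref{prop_global}, which are linear in $\|w\|_\infty$ and hence of size $C\delta$; the sharp powers of $t$ and of $v=\rho^{m-1}$ then come, as in part (i), from the intrinsic rescaling centred at points of $\Pos(\rho)$, now with the degenerate scale set by $v$ rather than by $x_n+t$.

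\emph{The main obstacle} is part (ii). In part (i) one only has to book-keep the intrinsic scale $L=c_0\min\{\sqrt{\sigma t_*},\sigma\}$ so that the rescaled cylinder stays inside the domain while the Laplacian coefficient stays bounded below — which is exactly why the threshold $x_n+t\ge\kappa_5\delta$ is imposed. The substance is to propagate the $\delta$-smallness, which is only an $L^\infty$ consequence of flatness, into the Lipschitz-type norm required by Proposition \ref{prop_global}, uniformly up to the free boundary where the part-(i) gradient bound degenerates, while keeping the globalization and comparison errors below the $C\delta$ threshold. That is where essentially all of the work lies.
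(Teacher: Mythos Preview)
Your treatment of Part~(i) is correct and essentially the paper's argument: rescale the pressure about a point at distance $\sigma=x_n+t$ from the free boundary so that the equation becomes uniformly parabolic with bounded coefficients, apply interior parabolic estimates to the deviation $w=v-(x_n+t)$, and unwind the scaling. Your choice $L=c_0\min\{\sqrt{\sigma t_*},\sigma\}$ is a clean way to see how the two summands in the bound arise; the paper does the same in spirit but phrases the rescaling slightly differently.

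Part~(ii), however, has a real gap. Your sentence ``by uniqueness and finite propagation $\tilde\rho$ coincides with $\rho$ on a neighbourhood of $(0,0)$'' is false: the porous medium equation has finite speed of propagation of the \emph{support}, but in the positivity set it is parabolic and there is no domain of dependence --- two solutions that agree on $B_{3/2}$ at time $t_1'$ but differ outside will in general differ inside $B_1$ for all $t>t_1'$. Your fallback, invoking Proposition~\ref{difference}(ii), does not apply either: that proposition compares two \emph{global} solutions both satisfying the gradient condition \eqref{mucond}, whereas your $\rho$ is only defined on $B_2$. Even if you sandwich $\rho$ between two global extensions $\tilde\rho_\pm$ (as is done in the proof of Proposition~\ref{improves}, not here), you would obtain only pointwise closeness $|\rho-\tilde\rho_\pm|\le C\delta$, not derivative estimates for $\rho$ itself; the smoothness of $\tilde\rho_\pm$ cannot be transferred to $\rho$ without an additional local argument. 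Finally, the ``linear weighted parabolic estimates underlying Proposition~\ref{prop_global}'' that you invoke at the end are not stated anywhere in a form that takes $\|w\|_{L^\infty}$ as input --- Proposition~\ref{prop_global} is formulated in terms of the gradient of the initial datum.

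The paper's route for Part~(ii) is entirely local and avoids all of this. It performs the von~Mises transform $(t,x)\mapsto(t,x',\rho^{m-1}(t,x))$, which straightens the free boundary to $\{y_n=0\}$ and turns the pressure equation into a fixed quasilinear degenerate equation \eqref{TPE} on the half-space for the height function $u$. The gradient hypothesis $|\nabla\rho^{m-1}-e_n|\le\mu$ becomes $|\nabla_y u|\le\mu$, and $|w|\le\delta$ becomes $\|u\|_{L^\infty}\lesssim\delta$. One then proves a genuinely local Lemma (Lemma~\ref{lem:local}): multiply by a cutoff $\eta^2$, apply a Calder\'on--Zygmund estimate for the linear operator $\partial_s-y_n\Delta-(1+\sigma)\partial_n$ (Lemma~\ref{CZ}), absorb the quasilinear coefficient errors using $|\nabla u|\le\mu$ small, and control the commutator term $y_n\,\partial\eta^2\cdot\partial u$ by a weighted interpolation inequality (Lemma~\ref{interpol}). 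This yields $\|D_{s,y}u\|_{L^p}+\|y_nD^2_yu\|_{L^p}\le C\|u\|_{L^\infty}\le C\delta$ on a smaller cylinder, which is exactly the $\delta$-smallness you were after; higher derivatives follow by differentiating tangentially and recovering the normal direction via an ODE lemma (Lemma~\ref{verticalderivative}). You correctly identified that Part~(ii) is where the work lies, but the work is this local degenerate-elliptic machinery, not a reduction to the global Proposition~\ref{prop_global}.
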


The first part is a purely parabolic estimate up to rescaling. The second part gives localized estimates of the type of Proposition \ref{prop_global}.


\section{Proof of Proposition \ref{improves}}\label{sec.prop1}

\begin{proof}

The proof is  organized in several  steps.

\subsection{Initial data for comparison solutions} We construct global
solutions much closer to $\rho$ than the traveling wave solutions.
Let $\rho$ be $\delta$-flat in $ Q_1 $ with $\delta \le \delta_0$
where $\delta_0$ will be chosen later on. In the first step we will
construct comparison functions $\rho_{\pm}$ as global solutions of the
PME on the strip $ (t_1,0) \times \Rn $ for some $t_1 \in
[-\frac{1}{2},0)$ by specifying initial data $ \rho_{\pm,0} $ at time
$t_1$ that satisfy the following properties:
\begin{equation} \label{globalTwBound}
\rho_{tw,-2\delta}(t_1,x) \le
  \rho_{-,0}(x) \le \rho_{+,0}(x) \le \rho_{tw,2\delta}(t_1,x) \text{
    for all } x \in \Rn,
\end{equation}
\begin{equation} \label{twFarOutside}
	\rho_{\pm,0}(x) = \rho_{tw,\pm2\delta}(t_1,x) \text{ for all } x \text{ with } |x| \geq \frac{3}{4},
\end{equation}
\begin{equation} \label{sandwichRhoNearZero}
	\rho_{-,0}(x) \le \rho(t_1,x) \le \rho_{+,0}(x) \text{ for all } x \text{ with } |x|\le 1,
\end{equation}
\begin{equation} \label{identification}
  \rho_{-,0}(x) = \rho(t_1,x) = \rho_{+,0}(x) \qquad \text{ for all } x \text{ with }
|x|\le \frac12 \text{ with} \ \rho^{m-1}(t_1,x) \ge \frac{4\delta}{\mu},
\end{equation}
and
\begin{equation} \label{initialGradEst}
	|\nabla \rho_{\pm,0}^{m-1}(x)- e_n | \le \mu \text{ for all } x \in \Pos(\rho_{\pm,0})
\end{equation}
where $ \mu $ is the constant from Proposition \ref{prop_global} and can be chosen as small as needed without loss of generality.

We postpone the construction of $\rho_{\pm,0}$ and the verification of its stated properties and explore first the consequences for the proof of the proposition. In this process we will choose
$t_1$ and $\delta_0$.

\subsection{The comparison solutions}
The gradient estimate \eqref{initialGradEst} and standard theory of
the PME ensure that there exist unique solutions $ \rho_\pm $ of the
PME on $ (t_1,0) \times \Rn $ with initial data $\rho_{\pm}(t_1,x)=
\rho_{\pm,0}(x) $.  By the maximum principle, the ordering
\eqref{globalTwBound} is preserved for these solutions for all
time. By Proposition \ref{prop_global} the solutions $\rho_{\pm}$ are
smooth in the sense of the proposition and satisfy the estimate
\eqref{Lipschitz} and the estimates  of Proposition \ref{regularity}.

Due to the identity  condition \eqref{twFarOutside} we can apply the second part of Proposition \ref{difference} to $\rho_{tw,-2\delta}(\tilde t+t_1,.)  $ and $\rho_-(\tilde t+t_1,.) $  resp. to $\rho_+(\tilde t+t_1,.) $ and $\rho_{tw,2\delta}(\tilde t+t_1,.) $ with
$R=\frac14$ and $ 0\le \tilde t \le |t_1|$ to get
\[
	\rho_-(t,x) \le  \rho_{tw,-2\delta} (t,x+ae_n)  \le    \rho_{tw,-\delta}(t,x)
\]
for $ t_1 \le t \le 0$ and $ |x| = 1 $ provided (see Proposition \ref{difference}, \eqref{aexp})
\begin{equation}
	\frac{a}{\delta} = c \left(\frac{\delta}{t-t_1}\right)^\frac{1}{m-1} e^{-\frac1{16c|t-t_1|}}
	\le c_2  e^{-\frac1{32c|t_1|}}
	\le 1
\end{equation}
for $ c_2 = c_2(n,m) $ which is the first restriction on $t_1$. It holds provided $|t_1|$ is
bounded by a constant depending only on $m$ and $n$.  Likewise, an application of Proposition \ref{difference} with $ R = \frac{1}{4} $ yields
$\rho_+(t,x) \ge \rho_{tw,+\delta}(t,x)$ on the same boundary set. Under this restriction we then have
\begin{equation} \label{boundary}
	\rho_-(t,x) \le \rho_{tw,-\delta}(t,x) \le \rho(t,x) \le \rho_{tw,\delta}(t,x) \le \rho_+(t,x)
\end{equation}
for $|x|=1$, and $t_1 \le t \le 0$ by the flatness of $ \rho $. Once we have this information at the boundary
we can apply the comparison principle and deduce from \eqref{boundary} and \eqref{sandwichRhoNearZero} that
\begin{equation}
	\rho_-(t,x) \le \rho(t,x) \le \rho_+(t,x)
\end{equation}
for $t_1 \le t \le 0 $ and $|x|\le 1$.

\subsection{The distance of the comparison solutions }
 Next, we apply Proposition  \ref{difference} with  $|x| \le \frac14$, $R=\frac14$  to $\rho_-(\tilde t+t_1,.)$ and $\rho_+(\tilde t+t_1,.)$
and $0< \tilde t\le |t_1|$. By construction  (more precisely condition \eqref{identification})
\[
\begin{split}
 \int_{B_{\frac14}(x)}  |\rho_{-,0}(y)-\rho_{+,0}(y)| dy
= &   \int_{B_{\frac14}(x)\cap \{ \rho^{m-1}(t_1,y) \le \frac{4\delta}{\mu}\} }
   |\rho_{-,0}(y)-\rho_{+,0}(y)| dy
\\  \le &     c_n \left( \frac{ \delta}{\mu}\right)^{\frac{m}{m-1}}
 \end{split}
\]
and hence $a$ in the first part of Proposition \ref{difference} is bounded by
\[
a\le c \delta \left[ e^{-\frac{1}{c(t-t_1)}} + \left(\frac{\delta}{t-t_1}\right)^{\frac1{m-1}} + \frac1{\mu}  \left( \frac{ \delta}{\mu}\right)^{\frac{1}{m-1}} \right].
\]
The exponential term is bounded by
\[
 e^{-\frac{1}{c(t-t_1)}} \le \delta (t-t_1)^4 \le \delta t_1^4
 \]
if $|t_1|$ is sufficiently small. If moreover $|t_1| \le \mu$ (recall that $\mu$ is the constant depending only on $n$ and $m$ in Proposition \ref{prop_global})
we obtain
\[
\rho_+(t,x) \le  \rho_-\left(t,x+  c_{n,m} \Big[ \Big(\frac{\delta}{t-t_1}\Big)^{\frac1{m-1}} + e^{-\frac{1}{C_5 (t-t_1)}}\Big]     \delta e_n\right),
\]
with a constant $ c = c(n,m) $. If $|x|\le \frac14$, $|t_1| \ll \frac12 $ (which we used above)
and
$\frac{t_1}2 \le t \le 0$ (to replace $t-t_1$ by $t_1$)
we obtain
\begin{equation}\label{sandw.2}
\rho_-(t,x) \le \rho(t,x) \le \rho_+(t,x) \le \rho_-\left(t,x +  \tau  \delta e_n \right), \quad  \tau:=  c_{n,m} \Big(\frac{\delta}{|t_1|} \Big)^{\frac1{m-1}} + |t_1|^4.
\end{equation}
We will complete the proof by studying the distance of the two solutions to the first order Taylor expansion of $\rho_-^{m-1}$ at a boundary point.

\subsection{Improved flatness}
 The pressure formulation of the porous medium equation shows that the
affine part of the Taylor expansion of solutions at the free boundary defines traveling wave solutions  at the
level of linear approximations: If $\rho$ is a solution with smooth pressure
$v= \rho^{m-1}$ and $\nabla_x v(t_0,x_0)  \ne 0 $ at the point $(t_0,x_0)$ of the boundary of the support then
\[ \dt v(t_0,x_0)   = |\nabla v(t_0,x_0)|^2 \]
and hence
\[
\left[(t-t_0) \dt v + (x-x_0) \nabla v(t_0,x_0)\right]_+ =
\rho_{tw}^{m - 1}(t-t_0,x-x_0;V,1, {\mathbf a})\]
with $ V = |\gradx
v(t_0,x_0)| $ and $ {\bf a}=V^{-1} \gradx v(t_0,x_0) $.  We use this
on $ \rho_- $, which has smooth pressure by Proposition
\ref{regularity}, at the point $ (t_0,x_0) = (0,h e_n) $, where $h$ is
the unique non-negative number for which $he_n$ is in the boundary of
the support of $\rho_-(0,\cdot) $, hence
\[
	V= |\nabla \rho_-^{m-1}(0,he_n)|, \qquad \mathbf{a} =  V^{-1}  \nabla \rho_-^{m-1}(0,he_n).
\]

Higher derivatives of $\rho_{\pm}$ in $(t_1,0) \times B_{\frac14}(0)$ are
controlled by Part (ii)  of Proposition
\ref{regularity}, which is applicable because of
\eqref{initialGradEst}, hence in the positivity set
\[  |t-t_1|^{\frac{k + |\alpha|}2}(t+ \rho^{m-1}_{\pm}(t,x))^{\frac{|\alpha|}2}   \left|
\partial_t^k \partial^\alpha_x  (\rho^{m-1}_{\pm}- (x_n+t)) \right|\le C_2 \, \delta.
\]
In particular the remainder term of Taylor's formula applied in a ball with radius $ r $ centered at $ (0,h e_n) $ can be bounded
in the positivity set of $ \rho_- $ by
\[ |\rho^{m-1}_-(t,x)- \rho_{tw}^{m-1}(t,x;V,1,\mathbf{a})| \le   c (r/|t_1|)^{2}\delta \]
with $\tau$ from \eqref{sandw.2} provided
\[ |t| \le 2r \le |t_1|, \quad |x-he_n| \le 4r.  \]
Using this estimate  and \eqref{sandw.2}, by Taylor's formula the  graph of $\rho^{m-1}$ is sandwiched as
\begin{equation}\label{sandwich}    \rho_{tw}(t,x- c(r/|t_1|)^2\delta e_n;V,1,\mathbf{a})
\le   \rho(t,x) \le  \rho_{tw}\left(t,x+[c(r/|t_1|)^2 +\tau]  \delta e_n; V,1, \mathbf{a} \right).
\end{equation}
We define
\[ \tilde \rho(t,x)  = r^{-\frac1{m-1}} \rho(r t,r x) \]
which is sandwiched as
\begin{equation}\label{sandwich3}
   \rho_{tw}(t,x- cr|t_1|^{-2} \delta e_n;V,1,\mathbf{a})
\le  \tilde \rho(t,x)
 \le  \rho_{tw}\left(t,x+[cr|t_1|^{-2}  + \tau/t ]  \delta e_n; V,1,\mathbf{a}\right).
\end{equation}
We choose $t_1 $ so small that the previous conditions are satisfied, and in
addition $(c+1)|t_1| < \frac1{40}$, next $r= |t_1|^3$ and finally
$ \delta_0 \le \mu r^{m+2}/40 $
which ensures $\tilde \rho$ is sandwiched as
\[
	\rho_{tw}(t,x- \delta e_n/10;V,1,\mathbf{a}) \leq \tilde \rho(t,x) \leq \rho_{tw}(t,x+\delta e_n/10;V,1,\mathbf{a})
\]
for $|x|\le 2$ and $-2\le t \le 0$.

\subsection{Rescaling}
 We now compare the two approximations we constructed  for $\rho$ near $(0,0)$ after the rescaling with parameter $r$ to estimate $\lambda$ and $\nu$ of the statement of the Proposition. On the one hand,  $\delta$-flatness in the direction $e_n$ at $t =0$ implies that
\[
(x_n-\delta/r)_+ \le \tilde \rho^{m-1} (0,x)  \le (x_n +\delta/r)_+  \]
for $|x|\le 1$. On the other hand, we have
\[
\Big( V^{-1}  \mathbf{a} \cdot (x- \frac{1}{10} \delta  e_n \Big)_+  \le \tilde \rho^{m-1}(0,x) \le \Big( V^{-1} \mathbf{a} \cdot (x-\delta  e_n \Big)_+ \]
 for $|x|\le 2$.   We evaluate the inequalities at $e_n$ and at $\mathbf{a}$:
\[
\begin{split}
1 -\frac{\delta}r \le  c (\mathbf{a}_n  + \delta/10 ), & \quad
c  (\mathbf{a}_n - \delta/10) \le   1+ \frac{\delta}r, \\
 \mathbf{a}_n -\frac{\delta}r \le  c ( 1+  \mathbf{a}_n\delta/10),& \quad
\frac1{\lambda} (1-h\mathbf{a}_n\delta/10) \le \mathbf{a}_n + \frac{\delta}r\,.
\end{split}
\]
Thus, if $\delta_0 \le r/10$ - which we can satisfy - then
\[
V^{-1}  \le \frac{1+ \delta}{1-\frac{\delta}r } \le 1+ 2 \frac{\delta}r, \quad
V^{-1} \ge \frac{1-\delta}{1+\frac{\delta}r} \ge 1- 2 \frac{\delta}r,
\]
\[
\mathbf{a}_n \ge  V^{-1}  (1-\delta/r)-\frac{\delta}10  \ge 1-4\frac{\delta}r,
\quad
 |V'|^2 =1- V_n^2  \le 8     \frac{\delta}r.
 \]
 Thus  $ r/V < 1$ and the proof is complete, up to a construction of the initial data $\rho_{\pm,0}(x)$ .

\subsection{Construction of the comparison functions $\rho_{\pm,0}$}   We define
\[
	\rho_{\pm, R}^{m-1}(x)\ldef \rho_{tw,\pm2\delta}^{m-1}(t_1,x) \mp  12 \delta  (\frac{3}{4} - |x|)_+
\]
and
\[
	\rho_{\pm, L }^{m-1}(x)\ldef [\rho_{tw,\pm 2\delta}^{m-1}(t_1,x) \mp   \mu  (x_n+t_1\pm 2\delta)]_+,
\]
and with these notations
\[ \rho_{-,0}(x) \ldef
	\begin{cases}
		\min \{\rho_{-,R}(x), \; \rho_{-,L}(x), \; \rho(t_1,x) \} &\text{ if } |x| \leq 1 \\
		\rho_{tw,-2\delta}(t_1,x) &\text{ if } |x| > 1
	\end{cases}
\]
and
\[ \rho_{+,0}(x) \ldef
	\begin{cases}
		\max \{\rho_{+,R}(x), \; \rho_{+,L}(x), \; \rho(t_1,x) \} &\text{ if } |x| \leq 1 \\
		\rho_{tw,2\delta}(t_1,x) &\text{ if } |x| > 1.
	\end{cases}
\]
With this, \eqref{globalTwBound}, \eqref{twFarOutside} and \eqref{sandwichRhoNearZero} are obvious.
But the definition also implies that
\begin{equation}
	\rho_{-,0}(x)  = \rho(t_1,x) = \rho_{+,0}(x)  \qquad \text{ if } |x|\le \frac12, \text{ and } t_1+x_n \ge  3\max\{ 2\delta,  \frac{\mu}{C_5 \delta} \}\,,
\end{equation}
 hence \eqref{identification}.

We set $t_1 \ge -\frac12$. For $ \Lambda>\max\{\kappa_5 \delta_0, 4\delta \} $ (which we will ensure later on) we use the first part of Proposition \ref{regularity} to obtain
\[
	|\gradx \rho_{\pm,0}^{m-1}(x)-e_n | \le C _5\delta (x_n+t_1)^{-1}
\]
for $|x|\le \frac12 $ and $-\frac{1}{2} \le  t_1$ and $x_n+t_1\ge \Lambda $.
We will choose
\begin{equation*}
	\Lambda =  \frac{4\delta}{\mu}
\end{equation*}
assuming without loss of generality that $\mu \le \min\{\kappa_5^{-1},\frac1{16}\}  $ and $\delta \le \delta_0 \le \mu$.

For $ |x|\ge \frac12 $ or $t_1+x_n  <\Lambda$
 the same bound follows directly from the definition of $ \rho_{\pm,0} $.  Altogether, we then have
\begin{equation*}
   \left| \nabla_x (\rho_{\pm,0}^{m-1}(x) - (x_n+t_1) ) \right| \le \mu \text{ for } x \in \Pos(\rho_{\pm,0})
\end{equation*}
and thus \eqref{initialGradEst} holds.
\end{proof}


\section{Proof of Theorem \ref{th-main}}\label{proofthm}

In this section we deduce $C^{1,\alpha}$ regularity claimed in Theorem
\ref{th-main} from the Proposition \ref{improves} of the previous
section.  Full regularity follows from Proposition \ref{regularity}.
The iteration argument using the improvement of flatness of
Proposition \ref{improves} goes back to Caffarelli's work, see
\cite{CW90} for example.

\begin{proof}

We first prove the claim:

\begin{claim} \label{claim}
There exists $ \alpha > 0 $ and $ \delta > 0 $ such that the following holds:

If the solution $ \rho $ is $ \delta_1 $-flat on $ Q_1=(-1,0) \times
B_1(0) $ for a $ \delta_1 < \delta $ with speed 1 and $ {\bf a }= e_n
$, then
\[
\Vert \gradx \rho^{m-1}   - e_n \Vert_{L^\infty\big(([-1/2,0] \times  B_{1/2}(0)) \cap   \mathcal{P}(\rho)\big)} < c \, \delta.
 \]
 Moreover, $ \rho^{m-1} \in C^{1,\alpha}(((-1/2,0] \times B_{1/2}(0)) \cap
   \mathcal{P}(\rho))$ in the sense that the derivatives are H\"older continuous up to the boundary.
\end{claim}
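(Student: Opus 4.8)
The plan is to prove the Claim by a Caffarelli-type iteration of the improvement-of-flatness Proposition~\ref{improves}, carried out at every free boundary point lying in the smaller cylinder, and then to push the resulting bounds into the positivity set by rescaling and invoking the interior estimates of Proposition~\ref{regularity}. \emph{Set-up and iteration.} Fix a point $(t_0,x_0)$ on the boundary of the support with $t_0\in[-\tfrac12,0]$ and $x_0\in\overline{B_{1/2}(0)}$. Since $\rho$ is $\delta_1$-flat at $(0,0)$ on scale $1$ and $(t_0,x_0)$ is a free boundary point, one has $t_0+x_{0,n}=O(\delta_1)$, and restricting the trapping between the two travelling waves to the sub-cylinder $Q_{1/4,1}(t_0,x_0)\subset Q_{1,1}(0,0)$ shows that $\rho$ is $8\delta_1$-flat at $(t_0,x_0)$ on scale $\tfrac14$, with speed $1$ and direction $e_n$. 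Applying an admissible rescaling of the PME (which leaves the flatness constant unchanged) we may return to the normalized situation of Proposition~\ref{improves} with $\delta^{(0)}:=8\delta_1<\delta_0$, provided the threshold $\delta$ in the Claim is chosen small enough. I would then iterate Proposition~\ref{improves}: at the $k$-th step it produces $\Lambda_k>0$ and a unit vector $\mathbf a_k$ with $|1-\Lambda_k|=O(\delta^{(k-1)})$ and $|\mathbf a_k-\mathbf a_{k-1}|=O(\sqrt{\delta^{(k-1)}})$ (with $\mathbf a_0=e_n$ and $\delta^{(k)}=2^{-k}\delta^{(0)}$), such that the solution $\rho^{(k)}$ obtained from $\rho$ by composing the $k$ rescalings, re-centred at $(t_0,x_0)$ and written in coordinates rotated so that $\mathbf a_k$ plays the role of $e_n$, is $\delta^{(k)}$-flat at $(0,0)$ on scale $1$ with speed $1$. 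The key point of this step is that the hypotheses of Proposition~\ref{improves} are regenerated exactly: $(0,0)$ stays on the free boundary of $\rho^{(k)}$ with $\rho^{(k)}(0,0)=0$, the composed rescaling contracts geometrically so $\rho^{(k)}$ really is defined on $Q_1$ because $(t_0,x_0)$ lies well inside $Q_1$, and $\delta^{(k)}<\delta_0$ holds automatically.

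\emph{Summation.} The geometric decay $\delta^{(k)}=2^{-k}\delta^{(0)}$ makes both $\sum_k|1-\Lambda_k|$ and $\sum_k|\mathbf a_k-\mathbf a_{k-1}|$ converge, with sums $O(\delta^{(0)})$ and $O(\sqrt{\delta^{(0)}})$; hence $\Lambda^\ast:=\prod_k\Lambda_k$ and $\mathbf a^\ast:=\lim_k\mathbf a_k$ exist, with $|\Lambda^\ast-1|\le C\delta^{(0)}$ and $|\mathbf a^\ast-e_n|\le C\sqrt{\delta^{(0)}}$. Because the flatness of $\rho$ around $(t_0,x_0)$ at scale $r^k$ is $O(\delta^{(k)})=O(\delta^{(0)}(r^k)^\alpha)$ with $\alpha:=\log 2/\log(1/r)>0$, the pressure $\rho^{m-1}$ admits a first-order Taylor expansion at $(t_0,x_0)$; tracking how the rescalings act on $\gradx\rho^{m-1}$ (each multiplies it by a $\Lambda_j$) identifies the limit as $\gradx\rho^{m-1}(t_0,x_0)=(\Lambda^\ast)^{-1}\mathbf a^\ast$, so that $|\gradx\rho^{m-1}(t_0,x_0)-e_n|\le C\sqrt{\delta_1}$, which is the smallness asserted in the Claim at free boundary points. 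Running the same construction at two free boundary points at distance $d\le r$ and comparing at the common scale $\sim d$ gives $|\gradx\rho^{m-1}(P)-\gradx\rho^{m-1}(P')|\le C\delta_1 d^\alpha$; in particular $\Gamma$ is locally a $C^{1,\alpha}$ graph $t=h(x)$ in $Q_{1/2}$, squeezed between the hyperplanar fronts of the sandwiching travelling waves.

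\emph{Interior points and $C^{1,\alpha}$ up to the boundary.} For a positivity point $(t_0,x_0)\in(-\tfrac12,0]\times B_{1/2}$ set $\ell:=\rho^{m-1}(t_0,x_0)$, which by flatness is comparable to the distance of $(t_0,x_0)$ to $\Gamma$ as soon as $\ell\gtrsim\delta_1$. If $\ell\gtrsim\sqrt{\delta_1}$, then $\rho^{m-1}\sim\ell$ on a cylinder $Q_{c\ell}(t_0,x_0)\subset Q_1$; rescaling it to unit size turns $\rho$ into a non-degenerate $(\delta_1/\ell)$-flat solution, and Proposition~\ref{regularity}(i) (purely parabolic in that regime) yields, since $\gradx\rho^{m-1}$ is scale-invariant, $|\gradx\rho^{m-1}(t_0,x_0)-e_n|\le C\delta_1/\ell\le C\sqrt{\delta_1}$. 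If $\ell\lesssim\sqrt{\delta_1}$, I would instead take a nearest free boundary point $P$, at distance $\sim\ell$, and rescale $Q_{C\ell}(P)\subset Q_1$ to unit size: by the previous step the rescaled solution is $O(\delta_1\ell^\alpha)$-flat, the image of $(t_0,x_0)$ sits at unit distance from the free boundary, and Proposition~\ref{regularity}(i) gives $|\gradx\rho^{m-1}(t_0,x_0)-\mathbf a(P)|\le C\delta_1\ell^\alpha$, hence again $|\gradx\rho^{m-1}(t_0,x_0)-e_n|\le C\sqrt{\delta_1}$ using the bound on $|\mathbf a(P)-e_n|$ from the summation step. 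This gives the $L^\infty$ bound. Hölder continuity of $\gradx\rho^{m-1}$ up to $\Gamma$ follows from the same dichotomy — interior parabolic (Schauder-type) estimates on cylinders of size comparable to the distance to $\Gamma$ for pairs of interior points, and the $\delta_1\sigma^\alpha$ decay of the flatness on $Q_\sigma$ around free boundary points for the remaining pairs — assembled by a standard covering argument into $\rho^{m-1}\in C^{1,\alpha}\big(\overline{((-\tfrac12,0]\times B_{1/2})\cap\mathcal{P}(\rho)}\big)$.

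\emph{Main obstacle.} I expect the genuinely delicate points to be, first, verifying that the hypotheses of Proposition~\ref{improves} are regenerated \emph{exactly} along the iteration — which forces one to re-centre at the fixed free boundary point, to keep re-rotating coordinates onto the current approximate direction while absorbing the speed correction into $\Lambda_k$, and to check that the composed rescalings never leave the original cylinder — and second, the treatment of positivity points within distance $\lesssim\sqrt{\delta_1}$ of $\Gamma$, where the interior bound of Proposition~\ref{regularity}(i) degenerates and one must blow up at the nearest free boundary point and feed in the scale-by-scale flatness decay produced by the iteration. Neither step is conceptually new in the Caffarelli framework, but both require careful bookkeeping of the constants and domains.
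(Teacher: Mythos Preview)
Your proposal follows essentially the same route as the paper: iterate Proposition~\ref{improves} at each free boundary point in $Q_{1/2}$, sum the geometric series for direction and speed, deduce the $C^{1,\alpha}$ expansion at the boundary, and then reach interior points by rescaling at the distance to $\Gamma$ and invoking Proposition~\ref{regularity}. The paper's only organizational differences are that it handles the interior H\"older estimate via a triangle-inequality reduction to the two cases ``one point on $\Gamma$'' and ``both points at equal distance to $\Gamma$'' rather than your threshold dichotomy at $\ell\sim\sqrt{\delta_1}$, and that in the iteration it records the direction increment as $O(2^{-j}\delta)$ rather than your $O(\sqrt{2^{-j}\delta})$ (summing to $c\delta$ versus your $C\sqrt{\delta_1}$ in the $L^\infty$ bound)---a discrepancy traceable to the statement of Proposition~\ref{improves} and immaterial for the $C^{1,\alpha}$ conclusion, since the linear-in-$\delta$ gradient bound of Theorem~\ref{th-main} is in any case recovered afterwards from Proposition~\ref{regularity}(ii).
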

The claim  implies that the free boundary is the graph of a function $h \in
C^{1,\alpha}$.

\subsection{Proof of the claim: Setup}
By assumption, $\rho$ is a $\delta$-flat solution with $\delta \le \delta_1:= \delta_0 t_0/2$, with $\delta_0$ and $t_0$ as in Proposition 1.    Let us change the origin to a point $(s,y)$ of the free boundary with $-\frac12 \le s \le 0 $ and $|y|\le \frac12$. If $\delta \le \delta_1$ and if $\rho$ is $\delta$-flat then  the function
\[
\rho_0(t,x)   = 2^{\frac1{m-1}} \rho(\frac12 t+s, \frac12 x+y)
\]
is $2\delta$-flat, and we can apply Proposition \ref{improves} because of our assumption on the smallness of $\delta$. Thus, there exists $V$ and a unit normal vector $\mathbf{a}$ so that the newly rescaled function
\[
\rho_1(t,x) =   (2/t_0)^{\frac1{m-1}} \rho\Big( \frac{t_0}{2V^2}t+ s , \frac{t_0}{2V}  x+y\Big)
\]
satisfies
\[
 (t+ \langle \mathbf{a}, x\rangle - \delta)_+ \le
 \rho_1(t,x)^{m-1}   \le    (t+ \langle \mathbf{a} , x\rangle + \delta)_+
 \]
i.e., $  \rho_1(t,x)$ is $\delta$-flat in direction $\mathbf{a}$ with velocity $V$, and
$2V\delta/t_0$ flat in direction $e_n$ with velocity $1$, hence
\[
|\mathbf{a} - e_n| + |V-1| \le 2\delta/t_0.
\]


\subsection{Proof of the claim: Iteration}
We repeat this construction recursively, but now we keep the point
$(0,0)$ fixed since it is in the free boundary.  Then there exist a
sequence of unit vectors $\mathbf{a}_j$ and numbers $W_j$ so that
\[
\rho_j(t,x) =  r^{-\frac{j}{m-1}}    \rho\Big(W_j^{-2} r^j t+s , W_j^{-1} r  x+y \Big)
\]
is $2^{1-j}\delta $-flat in direction  $\mathbf{a}_j$ at every step.
More precisely
\[
 \Big(t+ \langle  \mathbf{a}_j, x\rangle - 2^{1-j} \delta\Big)_+^{\frac1{m-1}} \le
 \rho_j(t,x) \le   (t+ \langle \nu_j, x\rangle + 2^{1-j} \delta)_+^{\frac1{m-1}} .
\]
 Moreover,
\[
\left| \frac{W_{j+1}}{W_j} -1\right|= |V_j -1|  \le  2^{1-j} \delta
 \]
\[  \left|  \mathbf{a}_{j+1} -\mathbf{a}_j  \right| \le  2^{1-j}\delta.
\]
since $\rho_j$ is $(\frac12)^{j-1}\delta$ flat.
Summing  a geometric series,
\[
|\mathbf{a}_j - e_n | + |\Lambda_j-1| \le c \delta\,,
\]
and both quantities are Cauchy sequences with limit $\mathbf{a}(s,y)$  and
$W(s,y)$ which are functions of the point considered initially  and which satisfy
\[ |W(s,y)-1|  + |\mathbf{a}(s,y)  -1|\le  c\delta.\]
 Without loss of generality  we may assume that $W(s,y) = 1$ at the expense of increasing
$\delta$ by a fixed factor, and that $\mathbf {a}(s,y)= e_n$ for a fixed point $(s,y)$ in the free boundary.

\subsection{Proof of Theorem \ref{th-main}: $C^{1,\alpha}$ regularity at the free boundary}  We claim that this analysis shows that there exists $C>0$ so that
\begin{equation}\label{form.alpha}
(t + \langle x,\mathbf{a}(s,y) \rangle -CR^{\alpha_1})_+^{\frac1{m-1}}
\le   \rho(s+ \Lambda^2(s,y)t,y+\Lambda x ) \le    (t + \langle x,\mathbf{a}) +CR^{\alpha_1})_+^{\frac1{m-1}}
\end{equation}
if $|x-y|+|t-s| \le R/C $, $\nu$ depends on the free boundary  point $(s,y)$ and
\[ \alpha_1 = \frac{\ln 1/2}{\ln r}>1.  \]
  This is a rather standard counting argument, but we will give the details for the reader's convenience. Place yourself at one such point $(t,x)$ and count the maximal number of iterations $N$ so that the initial unit cylinder of the definition of $\delta$-flatness
for $\rho$ is shrunk but still contains $(t,x)$. Looking at the scalings of Proposition 1, this means for $R$ small (so that $N$ is large) we have roughly (but it will be precise enough)
\[
r^N \sim R \,.
\]
But the final flatness after $N$ steps is $\delta_N=\delta 2^{1-N}$, hence
\[
\delta_N\sim \delta \, 2^{-\log (R \Lambda^{-2})/ \log r}=C\delta R^{-\log 2/ \log r}=
C\delta R^{\log (1/2)/\log r}
\]
and this immediately implies  \eqref{form.alpha}.

\subsection{Proof of the Theorem \ref{th-main}: $C^{1,\alpha}$ regularity of the pressure}
 Equation \eqref{form.alpha} implies one sided
differentiability of $\rho^{m-1}$ at the free boundary, and H\"older continuity
of the derivative at the free boundary.

For $(t,x)$ and $(s,y)$ in the positivity set we want to prove
\[ |\nabla \rho^{m-1}(t,x) - \nabla \rho^{m-1}(s,y)| \le c (|t-s|+|x-y|)^{\alpha}. \]
By the triangle inequality it suffices to consider two cases.
If  $(t_0,x_0)$ is at the free boundary, and $(t_1,x_1)$ is not we
denote by  $d$ be the distance to the free boundary. By \eqref{form.alpha}
the rescaled function
\[  \tilde \rho(t,x) = (d/2)^{-\frac1{m-1}}  \rho(dt/2,dx/2)  \]
 is $d^\alpha \delta$ flat. By Proposition
\ref{regularity} the higher  order derivative derivatives of $\tilde \rho^{m-1}$ are uniformly bounded
 in a ball of radius $1/4$ around $(dt_1/2,dx_1/2)$. This is only compatible with $d^\alpha \delta$ flatness if the second order derivatives in the ball of radius $1/4$ are bounded by a constant times $d^\alpha \delta$. Then the same is true for first order derivatives, and the derivatives are $d^\alpha \delta$ close to the corresponding derivatives of the powers of
 the traveling wave solutions, and hence also to $D_{t,x} \rho^{m-1}(t_0,x_0)$.

 In the second case both $(t,x)$ and $(s,y)$ have the same distance to
 the free boundary and to another. After scaling this reduces to
 estimating second derivatives at points of distance $1$ to the free
 boundary in terms of $\delta$. This is the contents of Proposition
 \ref{regularity}, which also implies full regularity.
\end{proof}

\section{Von Mises transform and intrinsic geometry}\label{sec.vonmises}

For the proofs of Proposition \ref{difference} and \ref{regularity} we
want to linearize simultaneously the geometry and the porous medium
equation. For that purpose we change coordinates with respect to
dependent and independent variables simultaneously. After this change
we obtain an intrinsic subelliptic degenerate parabolic equation.
This section is devoted to this change of coordinates and a discussion
of the sub-Riemannian geometry associated to linearization of the
traveling wave.

Consider the von-Mises-transform of $ \rho^{m-1} $ denoted by $ w $,
assuming that the pressure $\rho^{m-1}$ is Lipschitz continuous and
$\partial_{x_n} \rho^{m-1}$ is bounded from below by a positive
constant on the positivity set.  We decompose $\R^n=\R^{n-1}\times \R
$ and write $ x = (x',x_n)$. We define the bi-Lipschitz map
\[
(t,x) \to (t,x',\rho^{m-1}(t,x))=(t,y)
\]
from the closure of the positivity set to $\{(t,y): y_n\ge 0\}$.  We
now introduce the inverse map: $w(t,y) = x_n$, and we will use
$w(t,y)$ instead of $\rho(t,x)$ as the basic unknown in our
computations. A tedious but standard calculation gives
\[
\partial_t w - y_n \Delta_x' w + y_n^{-\sigma} \partial_{y_n}
\Big[y_n^{1 + \sigma} \frac{1 + |\nabla_y' w|^2}{\partial_{y_n} w}
\Big]= 0
\] with
\begin{equation}
\sigma \ldef  \frac{2-m}{m-1} > -1.
\end{equation}
Here $\Delta'$ and $\nabla'$ denote the operators with respect to the
first $(n-1)$ space variables.  In this setting, the traveling wave
solution described in the introduction becomes
\[
w(t,x) = x_n -(1+\sigma) t\,,
\]
and the deviation from the traveling wave solution
 \[
 u = w -(x_n -(1+\sigma) t)
 \]
 satisfies the equation
 \begin{equation} \label{TPE}
\ds u - y_n \, \lapy' u - y_n^{-\sigma}
   \, \dn \Big[ y_n^{1+\sigma} \, \frac{\dn u -|\grady'u|^2}{1+ \dn u
   }\Big] = 0 \text{ on } (s_1,s_2) \times \overline{H},
\end{equation}
where $ (s_1,s_2) $ is a suitable rescaling of the original time
interval $ (t_1,t_2) $ (see \cite{Kienz14}). This equation can be
rewritten as a quasilinear equation
\begin{equation} \label{TPE_coeff}
	\ds u - y_n \, a^{ij}(Du)  \dij u - (1 + \sigma) \, b^j \, \dj u = 0
\end{equation}
with symmetric coefficients $ a^{ij} $ given by
\[ a^{ij} = a^{ij}[\grady u] =
	\begin{cases}
          \delta^{ij} & \text{ for } i,  \, j < n \\[0.2cm]
          - \frac{\di u}{1 + \dn u} & \text{ for } i < n, \, j = n \text{ or } i=n, j<n \\[0.2cm]
		\frac{1 + |\grady' u|^2}{(1 + \dn u)^2} & \text{ for } i = j = n,
	\end{cases}
\]
and
\[ b^j = b^j[\grady u] =
	\begin{cases}
		a^{nj} & \text{ for } j < n \\[0.2cm]
		\frac{1}{1 + \dn u} & \text{ for } j = n.
	\end{cases}
\]
We may also write the equation as a perturbation of the linear equation
\begin{equation} \label{TPE_inhom}
	\ds u - L_\sigma u = f[u]
\end{equation}
with inhomogeneity
\[ f[u] \ldef - y_n^{-\sigma} \dn \left( y_n^{1 + \sigma} \, \frac{|\grady u|^2}{1 + \dn u} \right) \]
and spatial linear operator
\[ L_\sigma u \ldef y_n \, \lap_y u + (1 + \sigma) \, \dn u. \]

The second order part of $L_\sigma$ defines a Riemannian metric $g$ on
$y_n>0$,
\[ g(x) (v,w) =   x_n^{-1} v\cdot w \]
which in turn defines a nonstandard metric on the closed upper half space
which is  the Carnot-Carath\'eodory metric $ \dist $ defined by the vector fields
\[  x_n^{1/2}  e_j \]
(see \cite{Kienz14}).

We denote intrinsic balls by $B^i_r(x)$. They
are related to Euclidean balls as follows.
Given arbitrary $ r > 0 $, $ y_0 \in \overline{H} $ and using the abbreviation
\[ R \ldef r \, (r + \sqrt{\yzeron}), \]
\[ B_{R/C} (x) \subset B_{\delta_1 r}^i(x) \subset B_{cR}(x) \]
for some $c>1$.

\section{Proof of Proposition \ref{difference}}
For the proof of this proposition we need a general Gaussian estimate in terms of the intrinsic metric and weighted measure, as already contained in \cite{Koc99}. We  provide a new and simpler proof with a standard strategy  as follows.

\begin{lemma} \label{Gaussian}
Let $ \sigma>-1$ and $a^{ij}(t,x)$ measurable,  uniformly bounded and coercive,
\begin{equation}
a^{ij}(t,x) \xi_i \xi_j \ge \nu |\xi|^2
\end{equation}
for almost all $t$ and $x$. We consider the equation
\begin{equation}\label{new.eq}
u_t - x_n^{-\sigma} \partial_i \left(x_n^{1+\sigma} a^{ij} \partial_j u \right) = 0 \,.
\end{equation}
Then there is a unique Green's function $g(t,x,s,y)$ such that the unique solution to the initial value problem for \eqref{new.eq}  is given by
\[ u(t,x) = \int g(t,s,x,y)      u_0(y) dy.
\]
 Moreover, there exist $c$ and $\delta>0$ so that we have the estimate
 \[
 |g(t,x,s,y)| \le c y_n^\sigma    | (x_n+y_n+ |t-s|)^{\frac12} |t-s|^{\frac12}|^{-n-\sigma}       e^{- \delta \frac{ |x-y|^2}{ (x_n+y_n +|x-y|) (t-s) } }.
\]
\end{lemma}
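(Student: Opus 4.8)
The plan is to recognize \eqref{new.eq} as the parabolic equation attached to a uniformly elliptic perturbation of a fixed strongly local Dirichlet form on the weighted metric measure space $(\overline{H},\dist,\mu)$, where $H=\{x_n>0\}$, $\mu\ldef x_n^\sigma\,dx$ and $\dist$ is the Carnot--Carath\'eodory distance of Section~\ref{sec.vonmises} generated by the fields $x_n^{1/2}e_j$, and then to read off the Gaussian upper bound from the classical equivalence between volume doubling plus a scale-invariant Poincar\'e inequality and the parabolic Harnack estimate for such forms. Concretely, to \eqref{new.eq} one associates the time-dependent bilinear form $\mathcal{E}_t(u,v)=\int_H x_n^{1+\sigma}\,a^{ij}(t,x)\,\di u\,\dj v\,dx$, whose energy density $x_n^{1+\sigma}a^{ij}\di u\,\dj u$ is, by coercivity and boundedness of $a^{ij}$, comparable to that of the model form $\mathcal{E}^0(u,v)=\int_H x_n\,\grad u\cdot\grad v\,d\mu$; this $\mathcal{E}^0$ defines a strongly local regular Dirichlet form on $L^2(\mu)$ whose intrinsic metric is bi-Lipschitz to $\dist$ and induces the Euclidean topology, and for which $(\overline H,\dist)$ is complete and a length space. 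Standard theory then produces a fundamental solution (Green's function) $g(t,s,x,y)$ of \eqref{new.eq}, unique in the class in which the Cauchy problem is well posed, and $\tilde g(t,s,x,y)\ldef y_n^{-\sigma}\,g(t,s,x,y)$ is the kernel of the flow with respect to $d\mu$, the natural object for the estimate.

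The two structural inputs are verified as follows. From the ball comparison recorded in Section~\ref{sec.vonmises}, $B_{R/C}(x_0)\subset B^i_r(x_0)\subset B_{cR}(x_0)$ with $R\asymp r(r+\sqrt{x_{0,n}})$, a direct computation gives $\mu(B^i_r(x_0))\asymp r^{n}(x_{0,n}+r^2)^{n/2+\sigma}$, which is doubling with a constant depending only on $n$ and $\sigma$ (this is where $\sigma>-1$ enters, guaranteeing local integrability of $y_n^\sigma$). The scale-invariant $L^2$ Poincar\'e inequality $\int_{B^i_r}|u-u_{B^i_r}|^2\,d\mu\le Cr^2\int_{B^i_{2r}}x_n|\grad u|^2\,d\mu$ must hold for all radii and all centers: for balls staying away from $\{x_n=0\}$ it is the classical Euclidean Poincar\'e inequality after rescaling the metric by the (essentially constant) factor $x_n^{-1}$, while for balls centered at or near the degenerate hyperplane it is the weighted Grushin-type Poincar\'e inequality for the fields $x_n^{1/2}e_j$ against $x_n^\sigma\,dx$ from the sub-Riemannian analysis of \cite{Kienz14}.

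Given doubling and Poincar\'e, the Grigor'yan--Saloff-Coste--Sturm theory (in its version for operators with measurable, possibly time-dependent, coefficients, in the spirit of Aronson and Moser) yields for the $\mu$-kernel the bound $\tilde g(t,s,x,y)\le C\big[\mu(B^i_{\sqrt{t-s}}(x))\,\mu(B^i_{\sqrt{t-s}}(y))\big]^{-1/2}\exp\!\big(-\dist(x,y)^2/(C(t-s))\big)$, which is already more than the Lemma asks since only an upper bound is claimed. It remains to translate this into Euclidean quantities: inserting $\mu(B^i_{\sqrt{t-s}}(z))\asymp(t-s)^{n/2}(z_n+t-s)^{n/2+\sigma}$ from the volume estimate and $\dist(x,y)^2\asymp|x-y|^2/(x_n+y_n+|x-y|)$ from the same ball comparison, and multiplying back by $y_n^\sigma$, one reaches the asserted form; the only subtlety is that in the regime where $x_n$ and $y_n$ are in a very skewed ratio the factor $\exp(-\dist(x,y)^2/(C(t-s)))$ is correspondingly small, and a fixed fraction of it is spent to absorb the resulting polynomial discrepancy into a smaller constant $\delta$ in the exponent.

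The genuine work is the Poincar\'e inequality \emph{uniformly up to and across} the degenerate hyperplane $\{x_n=0\}$: this is the only point where the hypothesis $\sigma>-1$ is really used and where the sub-Riemannian geometry of Section~\ref{sec.vonmises} is essential, the interior case being classical. The translation step is bookkeeping. If one prefers to avoid invoking the full two-sided Harnack equivalence, the upper Gaussian bound alone can instead be derived directly from doubling and Poincar\'e by Nash/Moser iteration, or by a Davies--Gaffney integrated-maximum-principle argument phrased with respect to $\dist$; both routes also leave ample slack for the polynomial bookkeeping above, and either gives exactly the stated estimate.
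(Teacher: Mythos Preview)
Your proposal is correct, and the alternative route you mention at the end---Davies--Gaffney plus Moser iteration---is precisely what the paper does. Your \emph{primary} route, however, packages things differently: you verify volume doubling and a scale-invariant Poincar\'e inequality on $(\overline H,\dist,\mu)$ and then invoke the Grigor'yan--Saloff-Coste--Sturm machinery to obtain the Gaussian upper bound, outsourcing the Poincar\'e inequality itself to \cite{Kienz14}. The paper instead works directly: it establishes the Davies--Gaffney weighted $L^2$ estimate from the energy identity, then proves a weighted Nash inequality from scratch by combining the Euclidean Sobolev inequality with a weighted Hardy inequality (this is where $\sigma>-1$ enters in the paper's argument), runs a Moser iteration to get the $L^2\to L^\infty$ bound on intrinsic parabolic cylinders, and finally couples the two by optimizing the Davies--Gaffney exponent and applying duality to the backward equation. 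Your approach is cleaner and more conceptual if one is willing to cite the abstract theory; the paper's is more self-contained and avoids any question about whether the full two-sided Harnack equivalence applies with measurable time-dependent coefficients (for the upper bound alone this is not an obstacle, as you note, but the paper sidesteps it entirely).

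One small caution on your primary route: the Grigor'yan--Saloff-Coste--Sturm equivalence in its cleanest form is stated for autonomous Dirichlet forms, and while the upper Gaussian bound does survive for time-dependent uniformly elliptic perturbations (essentially via the Aronson--Moser argument you allude to), you should make sure the reference you cite actually covers that case. The Davies--Gaffney plus Moser route that both you and the paper ultimately rely on has no such issue.
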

\begin{proof}
The formal energy identity
\[
\frac12 \int y_n^\sigma |u(t,x)|^2 dx -\frac12 \int y_n^\sigma |u(s,x)|^2 dx
+ \int_s^t\int y_n^{1+\sigma} a^{ij} \partial_i u \partial_j u dx d\tau = 0
\]
can be used with a Galerkin approximation to construct a weak solution for given initial data which satisfies this energy identity. Let $\phi$ be a bounded
Lipschitz function
\begin{equation}
x_n |\nabla \phi|^2 \le 1.
\label{lipschitz}
\end{equation}
Note that this condition is equivalent to the requirement that $\eta$ is a Lipschitz function with Lipschitz constant $1$ with respect to the special Riemannian metric adapted to the problem.
 Then, again formally, but with standard justification, we have the weighted energy estimate
\begin{equation}
 \int e^{L\phi(x,y)}y_n^\sigma |u(t,x)|^2 dx
\le e^{c L^2 (t-s)} \int y_n^\sigma e^{L\phi(x,y)}|u(s,x)|^2 dx.
\end{equation}
Such estimates are called Davies-Gaffney estimates, see \cite{Da1}.

The next ingredient is the Moser iteration.
\begin{lemma}\label{moser}  Let  $B^i_{2R}(x)$ be the intrinsic ball. We assume that $u$ is a weak solution on $Q_{2R} = (-(2R)^2, 0] \times B_{2R}(x)$.  Then
\begin{equation}  \Vert u \Vert_{L^\infty(Q_R) }
\le c |Q_R|_\sigma^{-1}    \Vert u \Vert_{L^2(Q_{2R},x_n^s dx)} .
\end{equation}
\end{lemma}

\begin{proof}
Let $k >1$. We define the cylinders $ Q_j = Q_{(1+2^{-j}R) }$
and Lipschitz functions $\eta_j$ with $\eta_j= 1$ on $Q_j$ and $\eta_j=0$
on $Q \backslash Q_{j-1}$,
\[
|\partial_t \eta_j| , \quad x_n^{\frac12}  |D_x \eta_j| \lesssim  2^j \,.
\]

The starting points for Nash's inequality \eqref{nash}  are the Sobolev inequality
\[
\Vert u \Vert_{L^{\frac{n}{n-1}} (\mathbb{R}^n)} \le 2 \Vert \nabla  u \Vert_{L^1(\mathbb{R}^n)}
\]
and Hardy's inequality for $s> -1/p$,
\[  \Vert x_n^s u \Vert_{L^p(H)} \le  \frac1{s+\frac1p}    \Vert x_n^{s+1} \nabla u \Vert_{L^p(H) }\,.
\]
By an even reflection the Sobolev inequality holds in $H$,
\[
\Vert u \Vert_{L^{\frac{n}{n-1}} (H)} \le 2 \Vert \nabla  u \Vert_{L^1(H)}
\]
and hence, for $ s\ge 0 $,  by an  application of H\"older's inequality and Sobolev's inequality
\[
\begin{split}
 \Vert x_n^s u \Vert_{L^{\frac{n}{n-1}}}^{\frac{n}{n-1}} = &
  \int_0^\infty  (x_n^s \Vert u(.,x_n) \Vert_{L^1(\mathbb{R}^{n-1})})^{\frac1{n-1}}
  x_n^s \Vert u(.,x_n) \Vert_{L^{\frac{n-2}{n-2}}} dx_n
\\ \le  &  \sup_{x_n} (x_n^s \Vert u(.,x_n) \Vert_{L^1})^{\frac{1}{n-1}}
\Vert x_n^s \nabla u \Vert_{L^1}
\end{split}
\]
and
\[  t^s \Vert u(.,t) \Vert_{L^1}
\le t^s \Vert \partial_n u \Vert_{L^1(\{x_n >t \})}
\le \Vert x_n^s \partial_n u \Vert_{L^1}, \]
thus for $ s \ge 0$
\[
\Vert x_n^s u \Vert_{L^{\frac{n}{n-1}}(H) } \le 2 \Vert x_n^s \nabla u \Vert_{L^1(H)}
\]
We combine this inequality with the Hardy inequality for $p=1$ and we get
for $ 1 \le p \le \frac{n}{n-1}$
\[
 \Vert x_n^{s-\frac{n-1}{ p}  -n}    u \Vert_{L^p}
\le c_{n,p}  \Vert x_n^s \nabla u \Vert_{L^1}. \]

We apply this inequality to $ |u|^p$ and obtain for $ \frac1q-\frac1n \le \frac1p \le \frac1q $ the weighted Hardy-Sobolev inequality

\begin{equation}
\Vert x_n^{s-(\frac{n}p- \frac{n}q+1)}    u \Vert_{L^{p}(H) } \le c_{n,p} \Vert x_n^s \nabla u \Vert_{L^q(H)}\,,
\end{equation}
which in turn implies Nash's inequality for $s > -1$,
\begin{equation}\label{nash}
\Vert u \Vert_{L^{2+\frac2{n+s}}(x_n^s)}^{2+\frac2{n+s}}
\le c_n \Vert u \Vert_{L^2(x_n^s)}^{\frac2{n+s}} \Vert \nabla u \Vert_{L^2(x_n^{1+\sigma})}^2
\end{equation}

We formally calculate with $ \eta= \eta_j$, $p =p^j$ and $R=1$
\[
\begin{split}
\frac{d}{dt} \int x_n^\sigma \eta^2 u^{p}dx = & p \int_H x_n^\sigma
\eta^2 u^{p-1} u_t dx + 2 \int_H x_n^\sigma \eta \eta_t u^2 dx \\ = &
-p \int_H x_n^{1+\sigma} \partial_i u a^{ij} \partial_j (\eta^2
u^{p-1}) dx + 2 \int_H x_n^\sigma \eta \eta_t u^2 dx \\ = & -\int_H
\frac{4p}{(p-1)^2}x_n^{1+\sigma} \eta^2 a^{ij} \partial_i |u|^{p/2}
\partial_j |u|^{p/2} dx \\ & - 4 \int_H x_n^{1+\sigma} (\partial_i
u^{p/2}) a^{ij} \eta (\partial_j \eta) u^{p/2}) dx + 2 \int_H
x_n^\sigma \eta \eta_t u^p dx
\end{split}
\]
and hence, with $v = u^{p/2}$,
\[ \sup_t \int x_n^\sigma \eta^2  |v(t,x)|^2 dx
+ \frac{1}{p}
\int x_n^{1+\sigma} \eta^2 |\nabla v|^2 dx
\le   p 2^{2j} \Vert v \Vert^2_{L^2(Q_{j-1})}. \]
We combine this with Nash's inequality to
\[ \Vert u \Vert^{p^j}_{L^{p^j}( Q_j) } \le c  p^{2j} 2^{2j}  \Vert u \Vert_{L^{p^{j-1}}}^{p^j}
\]
and hence
\[ \Vert u \Vert_{L^{p^j}(Q_j)} \le ( c p^{2j} 2^{2j})^{\frac1{p^j}}  |Q|_{\sigma}^{\frac1{p^{j-1}} - \frac1{p_j}}
\Vert u \Vert_{L^{p^{j-1}}(Q_{j-1})} \]
which becomes
\[ \Vert u \Vert_{L^{p^k}( Q_k) }
\le \prod_{j=1}^k  ( c (2p)^{2j} )^{\frac1{p^j}}  |Q|_\sigma^{\frac{1}{p^k}-\frac12}  \Vert u \Vert_{L^2}. \]
Then
\[ c^{\sum\frac1{p^j}} \le  c^\Lambda \]
for some $\Lambda$ and
\[ p^{2\sum \frac{2j}{p^j}} \le P \]\,.
Now we let $j$ tend to $\infty$ to obtain  Moser's inequality \eqref{moser}.
\end{proof}

We continue with the proof of the Gaussian estimates of Lemma
\ref{Gaussian}.  In the next step we combine the
consequences of the Moser iteration and the Davies-Gaffney estimate,
and we obtain the Gaussian kernel bounds. This argument is general
and standard, and we only indicate the steps. First, let $x_1$ and
$x_2$ be two points with distance $R$. If $u_0\in L^2$ is supported in
an intrinsic ball $B_{R/4}^i(x_1)$ then by the Davies Gaffney estimate
\[
\Vert u(t) \Vert_{L^2(B_{R/4}^i)} \le c e^{t L^2   - LR/2  }
\Vert u_0 \Vert_{L^2}
\]
for which we optimize with $L= \frac{R}{4t} $  which yields
\[
\Vert u(t) \Vert_{L^2(B_{R/4}^i)} \le c e^{- \frac{R^2}{16t}}
\Vert u_0 \Vert_{L^2}.
\]
Now we use the Moser estimate \eqref{moser} to conclude that for  $x \in B_{R/4}^i(x_2)$
\begin{equation}\label{half}
 |u(t,x)| \le c |B_{R/4}^i(x_2)|^{-\frac12} e^{-\frac{R^2}{16t}}
\Vert u_0 \Vert_{L^2(B_{R/4}^i(x_1))}.
\end{equation}
By the Riesz representation theorem the map
\[ L^2\ni u_0 \to u(t,x) \]
has a kernel
\[ g(t,s,x,y) \]
which by duality satisfies
\[ \partial_s g + \partial_i a^{ij} \partial_{y_j} g = 0. \] Let
$v(s,y) = g(t,s,x,y)$.  Then estimate \eqref{half} combined with the
Riesz representation theorem reads as
\[ \Vert v(s) \Vert_{L^2(B_R(x_1)^i)} \le |B_{R/2}^i|^{-\frac12} e^{-\frac{R^2}{16(t-s)}} \] and a second application of Moser's estimate implies the Gaussian bounds.
\end{proof}

\begin{proof}[Proof of Proposition \ref{difference}]
  The proof consists in tracing the assumptions through the von Mises
  transform. The conclusion is a consequence of the pointwise Gaussian
  bound.  Let $ w_1 $ and $ w_2 $ be two solutions of the transformed
  equation.  For the difference $ w_2 - w_1 = w $ we have
\[ \partial_t w - x_n \Delta_x' w +
 x_n^{-\sigma} \partial_{x_n}\left[  x_n^{1 + \sigma} \Big( \frac{1 +
   |\nabla_x' w_2|^2}{\partial_{x_n}  w_2}
- \frac{1 +
   |\nabla_x'  w_1|^2}{\partial_{x_n} w_1}
 \Big)\right]  = 0.
 \]
We  consider this as a linear equation
\[ \partial_t \hat w - x_n
 \Delta_x' \hat w - x_n^{-\sigma} \partial_{x_n} \left[ x_n^{1 + \sigma}
 \left( \frac{1+ |\nabla' w_2|^2}{\partial_n w_1 \partial_n w_2} \partial_n w - \sum_{i=1}^{n-1}
\frac{\partial_i(w_2+w_1)  }{\partial_n w_2} \partial_i w\right)\right]   =0  \]
with coefficients
\[ a^{nj}  = -\frac{\partial_j(w_2+w_1)}{\partial_n w_2}, \qquad
a^{nn}  = \frac{1+|\nabla' w_2|^2}{\partial_n w_2 \partial_n \tilde w_1}, \qquad a^{ij} = \delta_{ij} \qquad \text{ if } i<n \]
satisfying
 \[ |a^{ij}-\delta^{ij}| \le c \delta \]\,.
For such an equation we have the Gaussian estimate from Lemma \ref{Gaussian}.
The representation of the  solution  yields
\[
|w(t,x)| \leq  c
 |B_{\sqrt{t}}(x)|_\sigma^{-1} \int_H y_n^\sigma e^{- C
   \frac{d^2(x,y)}{t} } |w_2(0,y)-w_1(0,y)| dy.
\]
The claim of the proposition  follows if we prove
\[
\begin{split}  |B_{\sqrt{t}}(x)|_\sigma^{-1} \int_H y_n^\sigma e^{- C
   \frac{d^2(x,y)}{t} } |\tilde w(0,y)-w(0,y)| dy
& \\ & \hspace{-7cm} \le c \left\{ \delta (e^{-\frac{r^2}{C(r+\rho_0^{m-1}(x))t}}+ (\delta/t)^{\frac1{m-1}}
+ r^{-n}(r^{\frac1{m-1}} + \rho_0(x))^{m-2} \int_{B_r(x)}|\rho_0(y)-\tilde \rho_0(y)| dy   . \right\}
\end{split}
\]
The assumptions imply \ $|\tilde w - w| \le 2 \delta $,
and we claim that with the intrinsic balls $B^i_r\subset H$
\[
|B_{\sqrt{t}}^i(x)|_{\sigma}^{-1} \int_{H \backslash B_R^i(x)}
y_n^\sigma e^{-C \frac{d^2(x,y)}t} dy \le  c e^{- C\frac{R^2}{t}}
\le c e^{-C \frac{r(r+\rho^{m-1}(\hat x))}{t}}
\]
which follows by a straight-forward calculation and the observation that
$  B_r(x) \subset B_R^i(x)$ provided $  R \ge  r(r+x_n)$.
To complete the proof we show that
\begin{equation}
\begin{split}
  |B_{\sqrt{t}}^i(x_0)|_\sigma^{-1}   \int_{H \cap  B^i_{\sqrt{t}}(x_0)}
y_n^\sigma |w_2 - w_1 | dx
& \\ & \hspace{-5cm} \le C \Big[ r^{-n} (r^{\frac1{m-1}} + \rho_0(y_0))^{m-2} \int_{B_r(y_0)} |\tilde \rho_0(y)-\rho_0(y)| dy  + \delta  (\delta/t)^{\frac1{m-1}} \Big]
\end{split}
\end{equation}
where $y_0$ and $x_0$ are related by the coordinate change, i.e. $y_0= (x_0',\rho^{m-1}(t_1,x_0))$ and
\[    r(r+ \rho^{m-1}(x_0)) \sim  \sqrt{t}.  \]
Then
\[
\begin{split}
\int_{H \cap B^i_{\sqrt{t}}(x_0)\cap\{x_n>8\delta\} }
y_n^\sigma |\tilde w - w | dx
\sim & \int_{B_r(x_0)\cap \{ \rho^{m-1}> 4\delta\}}
\rho^{2-m}(x) |\tilde \rho^{m-1}(x)-\rho^{m-1}(x) | dx
\\ \sim &   \int_{B_r(y)\cap \{ \rho^{m-1}> 2\delta\}}
|\tilde \rho(x)-\rho(x) | dx.
\end{split}
 \]
This is the whole integral if
$  x_{0,n} > ct$. In that case
$ r  \sim \sqrt{t}/x_{0,n} \sim \sqrt{t} \rho^{m-1}(y_0)$
and
\[  |B_{\sqrt{t}}^i (x_0)|_\sigma \sim  \rho^{(m-1)(2-m+n/2)}(y_0)
 t^{n/2} \sim r^{n} \rho^{2-m}(y_0)     \]
completes the estimate.
If $x_{0,n} \le 2t$ we may as well enlarge the ball and assume that
$x_{0,n}=0$. Then
\[
  \int_{H \cap B_{\sqrt{t}}^i(x_0) \cap \{ x_n < 2\delta\}}|w_2(x)-w_1(x)|dx
\le c t^{n-1} \delta^{2+\sigma} = c t^{n-1} \delta \delta^{\frac{1}{m-1}} . \]
In this case $ |B^i_{\sqrt{t}}(x_0)|_{\sigma}  \sim t^{n+\sigma}$
which again completes the estimate.
\end{proof}

\section{Proof of Proposition \ref{regularity}}

\subsection{The interior estimate}
We notice that the traveling wave is given by
$(x_n+t)_+^{\frac1{m-1}}$ so that the free boundary is the hyperplane
with equation $x_n=-t$. By the estimates on $\rho$ we know that the
positivity set of $\rho$ inside $Q$ is contained in the set $x+t\ge
-\delta$ and contains the set $x+t\ge \delta$. We take a point
$Y_0=(x_0,t_0)$ in this last set, and denote by $L=x_{0,n} + t_0$
which is close to the distance from $Y_0$ to the free boundary  $\Gamma$.  We
assume that $\delta\ll L <1$.

Take now a cylinder ${\mathcal C}_0$ centered on $Y_0$ with measures
$2L/100$ in the $t$ direction and $2mL/(m-1)100$ in the
$x$-direction. Since $p_{tw}(T_0)=cL>0 $, by our estimates we conclude
that the pressure $p$ is bounded uniformly in ${\mathcal C}_0$ from
below by $.9L$ and from above by $1.1L$.  We now introduce the
rescaling
\[
 \widehat p(t',x')=\frac1{L}p(Lt',x_0+Lx')
\]
and then $\widehat p$ is uniformly bounded by constants $0.9$ and
$1.1$ in ${\mathcal C}_0'$ which is a certain cylinder centered at
$(0,0)$ with size independent of $\delta, L$. By the theory of
uniformly parabolic equations in divergence form applied to the
corresponding density  $\widehat \rho$ (see \cite{LUS75}) we conclude that
\[
|\partial_t^k\partial_x^\alpha\widehat \rho|\lesssim  \max\{1,{t'}^{-k-|\alpha|/2}\}
\]
 holds in the interior of the cylinder, for every $k$ and  $\alpha$.

Consider now the difference $v=p-p_{tw}$. It satisfies the equation
\[
v_t=(m-1)p\Delta v + \vec{b}\cdot \nabla v, \qquad \vec{b}=\nabla p +(m/(m-1))e_n
\]
or in divergence form
\begin{equation}\label{divform}
v_t=(m-1)\nabla\cdot(p\nabla v) + \vec{b_1}\cdot \nabla v,  \qquad \vec{b_1}=\vec{b}+(1-m)\nabla p.
\end{equation}
Applying again the theory from \cite{LUS75}  and using the estimates obtained in step (i) on the rescaled version of $v$ we get uniform estimates for $\widehat v$ in $1/2{\mathcal C}_0'$ of the form
\begin{equation}
\|\partial_t^k \partial^\alpha  \widehat v\|_{L^\infty(\frac12{\mathcal C}_0')}\le  t^{-k-|\alpha|/2}  \| \widehat v\|_{L^\infty({\mathcal C}_0')}= t^{-k-|\alpha|/2} \frac1{L}\|v\|_{L^\infty({\mathcal C}_0)}=t^{-k-|\alpha|/2}  \frac{C\delta}L .
\end{equation}
Undoing the rescaling we obtain the interior estimate of Proposition \ref{deltaest}.

\subsection{Change of coordinates}

We use the notation introduced in the Section \ref{sec.vonmises}. Since we restrict our attention to balls centered at the boundary we do not need to work with
intrinsic balls and define $B_R(y)^+= B_R(y) \cap H$. We consider the problem
written as in \eqref{TPE}. The condition
\begin{equation}  |\nabla \rho^{m-1}-e_n| \le \mu/4
\end{equation}
for $\mu \le \frac12$ translates  into
\begin{equation}
\sup_H |\grady u| \leq \mu, \label{smalldata}
\end{equation}
and
\begin{equation}\label{smallsup}
\Vert u \Vert_{sup} \le 2\Vert \rho^{m-1}-(t+x_n) \Vert_{sup} \end{equation}
and the second  part of Proposition \ref{regularity} follows from
the  following  local regularity result:

\begin{lemma} \label{lem:local}
There exists $\mu>0$ so that for any
any bounded function
\[ u: (0,1] \times B^+_2(x)\cap H  \]
which satisfies  equation \eqref{TPE}  and
\[ |D_x u | \le \mu  \]
satisfies also
\[| \partial_t^k \partial_y^\alpha u (t,y)|  \le c_{k,\alpha,m,n}
t^{-k-|\alpha|}  \Vert u \Vert_{L^\infty((0,1] \times B^+_2(0))}\]
for $0<t\le 1$ and $y_n \in B_1^+(0)$.
\end{lemma}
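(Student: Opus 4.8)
The plan is to prove Lemma \ref{lem:local} by a bootstrap argument for the degenerate parabolic equation \eqref{TPE}, treating it in the form \eqref{TPE_inhom} as a perturbation of the model operator $\ds - L_\sigma$. The starting point is a Schauder/$L^p$ regularity theory for the linear operator $\ds - L_\sigma$ on the half-space, which is a degenerate parabolic operator of the type studied for Grushin-type or Baouendi--Grushin geometries; here it is exactly the generator attached to the Carnot--Carath\'eodory metric $\dist$ described in Section \ref{sec.vonmises}. The key structural point is that in the intrinsic geometry the operator is \emph{uniformly} subelliptic parabolic, so one has scale-invariant interior estimates on intrinsic parabolic cylinders: a solution of $\ds u - L_\sigma u = f$ on $B^i_{2r}$ satisfies $\|u\|_{C^{2,\alpha}_{\mathrm{intr}}(B^i_r)} \lesssim r^{-(2+\alpha)}\|u\|_{L^\infty(B^i_{2r})} + \|f\|_{C^\alpha_{\mathrm{intr}}(B^i_{2r})}$, where derivatives are measured with the adapted vector fields $y_n^{1/2}e_j$. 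The factor $y_n^{1/2}$ in each spatial derivative is precisely what produces the weights $t^{-k-|\alpha|}$ in the conclusion once one rescales back to Euclidean coordinates near the boundary.

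Then I would set up the iteration as follows. First, from $|D_x u|\le \mu$ one reads off that the inhomogeneity $f[u] = -y_n^{-\sigma}\partial_n(y_n^{1+\sigma}|\grady u|^2/(1+\partial_n u))$ is small and controlled by $\mu$ times the intrinsic gradient, so the quasilinear equation \eqref{TPE_coeff} has coefficients $a^{ij}, b^j$ that are $C\mu$-close to those of the model operator; choosing $\mu$ small enough keeps us in the perturbative regime where the linear theory applies with uniform constants. Second, apply the interior estimate on a sequence of shrinking intrinsic cylinders: at each stage the already-established bound on lower-order intrinsic derivatives of $u$ gives a bound on the H\"older norm of the coefficients and of $f[u]$, which feeds into the next order of regularity. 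This is the standard Campanato/Schauder bootstrap, and because the equation is quasilinear with smooth (indeed rational) dependence on $Du$, one gains two intrinsic derivatives (or one intrinsic H\"older step) per iteration, eventually reaching $C^\infty$ in the intrinsic sense with bounds $|\,(y_n^{1/2}\nabla_y)^\beta \partial_t^k u\,| \lesssim t^{-k-|\beta|/2}\|u\|_\infty$ on intrinsic subcylinders of unit intrinsic size. Third, translate back: using the comparison $B_{R/C}(y)\subset B^i_{\delta_1 r}(y)\subset B_{cR}(y)$ with $R = r(r+\sqrt{y_n})$ from Section \ref{sec.vonmises}, a Euclidean derivative $\partial_{y_j}$ equals $y_n^{-1/2}$ times an intrinsic derivative, so $|\partial_t^k\partial_y^\alpha u|\lesssim t^{-k}\, y_n^{-|\alpha|/2}\,(\text{intrinsic bound})$; at a point with $y_n\lesssim t$ one has $R^2\sim t$ and the intrinsic-cylinder radius is comparable to $\sqrt t$, which converts the $t^{-k-|\beta|/2}$ weights together with the $y_n^{-|\alpha|/2}$ factor into the stated $t^{-k-|\alpha|}$, while for $y_n\gtrsim t$ one is in the interior and the purely parabolic estimate of the first subsection already applies.

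The main obstacle, and the place that needs genuine work rather than citation, is establishing the linear Schauder theory for $\ds - L_\sigma$ with constants uniform down to the boundary $y_n = 0$, since $\sigma$ can be negative (it only satisfies $\sigma>-1$) and the weight $y_n^\sigma$ degenerates or blows up there. This is exactly the kind of weighted estimate that Kienzler's work \cite{Kienz14, Kienz13} develops, so I expect to invoke it; the Gaussian kernel bound of Lemma \ref{Gaussian}, proved here via Davies--Gaffney plus Moser iteration, already supplies the $C^0$/$L^2$--$L^\infty$ half of the story in the correct intrinsic geometry, and one upgrades to H\"older and then smooth estimates by differentiating the equation and iterating. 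A secondary technical point is handling the boundary condition implicitly built into \eqref{TPE}: the equation is its own boundary condition in the sense that the flux term $y_n^{1+\sigma}(\partial_n u - |\grady' u|^2)/(1+\partial_n u)$ must remain regular up to $y_n=0$, and one must check that the a priori bound $|D_x u|\le\mu$ together with the equation forces the right Neumann-type behavior; this is where the $\sigma>-1$ hypothesis is used to keep the weighted integrals finite. Once the linear estimates are in hand with boundary-uniform constants, the nonlinear bootstrap is routine and the weight-counting in the last paragraph is a bookkeeping exercise.
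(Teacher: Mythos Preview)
Your proposal differs substantially from the paper's approach. The paper does \emph{not} set up an intrinsic Schauder/Campanato theory with the vector fields $y_n^{1/2}e_j$. Instead it works in Euclidean coordinates throughout and uses: (a) a Calder\'on--Zygmund $L^p$ estimate for the model operator $\partial_s-L_\sigma$ on $\R\times H$ (Lemma~\ref{CZ}), which bounds $\|D_{s,y}w\|_{L^p}+\|y_nD^2_yw\|_{L^p}$ by $\|f\|_{L^p}$; (b) a cutoff argument applied to $w=\eta^2u$, treating the deviation $a^{ij}-\delta^{ij}=O(\mu)$ as an absorbable perturbation and using the weighted interpolation Lemma~\ref{interpol} to handle the commutator $y_nD(\eta^2)Du$; (c) an ODE-type lemma (Lemma~\ref{verticalderivative}) exploiting the divergence structure in the $y_n$ variable to recover the pure vertical derivative $\partial_n^2u$ from tangential and time derivatives; (d) induction on the order, always differentiating first in tangential/time directions and then recovering the vertical direction separately. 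The exponent $p$ is chosen large enough to embed into $L^\infty$ at the end.

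Your outline has a genuine gap in the intrinsic-to-Euclidean conversion at the boundary. You write that a Euclidean derivative equals $y_n^{-1/2}$ times an intrinsic one and that ``at a point with $y_n\lesssim t$'' this factor combines with the intrinsic weight $t^{-|\beta|/2}$ to give $t^{-|\alpha|}$. But this arithmetic only works when $y_n\sim t$; at $y_n=0$ the factor $y_n^{-|\alpha|/2}$ is infinite, and control of the sub-Riemannian vector fields $y_n^{1/2}\partial_j$ alone says nothing about Euclidean regularity up to $\{y_n=0\}$. The relevant structural fact is rather that $\partial_s-L_\sigma$ is invariant under the \emph{isotropic} scaling $(s,y)\mapsto(\lambda s,\lambda y)$, so a unit-scale Euclidean estimate at a boundary point rescales directly to $|\partial_s^k\partial_y^\alpha u|\lesssim t^{-k-|\alpha|}\|u\|_\infty$. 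Establishing that unit-scale Euclidean estimate up to the degenerate boundary is precisely the hard part, and is what the paper's $L^p$ machinery---in particular the separate treatment of tangential versus vertical derivatives via Lemma~\ref{verticalderivative}---is designed to accomplish. An intrinsic H\"older theory formulated only in terms of the vector fields $y_n^{1/2}e_j$ does not deliver this for free.
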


This immediately implies half of the desired estimate in Part ii). The second half, when $y_n>t$, follows from the interior estimate of the first part:

\[| \partial_t^k \partial_y^\alpha u (t,y)|  \le c_{k,\alpha,m,n}
t^{-k-|\alpha|/2}(t+y_n)^{-|\alpha|/2}  \Vert u \Vert_{L^\infty((0,1] \times B^+_2(0))}.\]
In particular
  \begin{equation} \label{transsecond}  \Vert D_{t,x} u \Vert_{L^\infty}
    + \Vert y_n D^2_y u \Vert_{L^\infty} \le c \mu.  \end{equation}


\subsection{Real  analysis lemmata}
We collect three different real analysis results. We will encounter equations of the type
\begin{equation} \label{divergence1d}    \frac{d}{dx} (x F(x,v)) + s G(x,v) = 0 \end{equation}
 satisfying
\[
\left| \frac{\partial F}{\partial v} -1\right| + \left| \frac{\partial G}{\partial v} -1\right| < \delta.
\]

\begin{lemma}\label{verticalderivative}  Suppose that $ \frac1{s+1}<p \le \infty$ and that $\delta$ is sufficiently small. Then there exists a unique solution $v \in L^p$ to \eqref{divergence1d} which satisfies
\[ \Vert v \Vert_{L^p} \lesssim  \Vert F(.,0) \Vert_{L^p} + \Vert G(.,0) \Vert_{L^p}. \]
If
\begin{equation}    x f(x) v' + (s+1)G(x,v)= 0 \end{equation}
where
\[ |f-1| < \delta, \qquad   |\partial_v G(x,v) -1| < \delta \]
   then
\[ \Vert x_n v' \Vert_{L^p} + \Vert v \Vert_{L^p} \lesssim  \Vert G(.,0) \Vert_{L^p} \]
\end{lemma}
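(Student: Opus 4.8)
The plan is to invert the one–dimensional linear model operator $L_0 v\ldef\frac{d}{dx}(xv)+sv=xv'+(s+1)v$ explicitly on $L^p$, and then to absorb the $F$– and $G$–nonlinearities by a contraction argument, the point $p=\frac1{s+1}$ being exactly the threshold at which the homogeneous solution $x^{-(s+1)}$ of $L_0$ leaves $L^p$, which is what will force uniqueness. The integrating factor $x^s$ turns $L_0 v=g$ into $(x^{s+1}v)'=x^s g$, so the only candidate lying in $L^p$ near the degenerate endpoint $x=0$ is
\[ v=L_0^{-1}g,\qquad L_0^{-1}g(x)\ldef x^{-(s+1)}\int_0^x t^s\,g(t)\,dt. \]
The first thing I would prove is the bound $\|L_0^{-1}g\|_{L^p}\le C(s,p)\|g\|_{L^p}$ for $\frac1{s+1}<p\le\infty$. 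For $p=\infty$ it is immediate from $\int_0^x t^s\,dt=\frac{x^{s+1}}{s+1}$ ($s>-1$); for finite $p$ it is the weighted Hardy inequality: after the scaling $t=xu$ one has $L_0^{-1}g(x)=\int_0^1 u^s\,g(xu)\,du$, and $\|g(\cdot\,u)\|_{L^p}=u^{-1/p}\|g\|_{L^p}$, so by Minkowski's integral inequality (for $p\ge1$), or the analogous bound for nonnegative kernels homogeneous of degree $-1$ (for $\frac1{s+1}<p<1$, a regime that occurs since $s=\sigma\ge0$ when $1<m\le2$), $\|L_0^{-1}g\|_{L^p}\le\big(\int_0^1 u^{s-1/p}\,du\big)\|g\|_{L^p}$, with $\int_0^1 u^{s-1/p}\,du<\infty$ precisely when $p>\frac1{s+1}$. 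I would also record the algebraic identity $L_0^{-1}\big(\tfrac{d}{dx}(xh)\big)=(1-sL_0^{-1})h$, checked by applying $L_0$; it is what keeps the divergence structure of the equation inside the good weighted space rather than producing an unweighted Hardy operator, which is unbounded on $L^p$ for $p\le1$.

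For the first equation I would write $F(x,v)=v+F(x,0)+N_F(x,v)$ with $N_F(x,v)\ldef\int_0^v\big(\partial_vF(x,w)-1\big)\,dw$, so that $N_F(x,0)=0$ and $|N_F(x,v_1)-N_F(x,v_2)|\le\delta|v_1-v_2|$, and similarly $G(x,v)=v+G(x,0)+N_G(x,v)$. Reading the equation in its integrated form $xF(x,v(x))=-s\int_0^x G(t,v(t))\,dt$ near $x=0$, moving the model part $L_0 v$ to one side and inverting it with $L_0^{-1}$ (using the identity above on the divergence terms $\frac{d}{dx}(xF(x,0))$ and $\frac{d}{dx}(xN_F(x,v))$) turns it into the fixed point equation $v=\Phi(v)$ with
\[ \Phi(v)=-(1-sL_0^{-1})\big(F(\cdot,0)+N_F(\cdot,v)\big)-sL_0^{-1}\big(G(\cdot,0)+N_G(\cdot,v)\big). \]
By the first step both $1-sL_0^{-1}$ and $L_0^{-1}$ are bounded on $L^p$, and since the $F(\cdot,0)$, $G(\cdot,0)$ terms cancel in a difference, $\|\Phi(v_1)-\Phi(v_2)\|_{L^p}\le C(s,p)\,\delta\,\|v_1-v_2\|_{L^p}$, a contraction once $\delta$ is small (depending on $s$ and $p$). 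The unique fixed point obeys $\|v\|_{L^p}\le(1-C\delta)^{-1}\|\Phi(0)\|_{L^p}\lesssim\|F(\cdot,0)\|_{L^p}+\|G(\cdot,0)\|_{L^p}$. Uniqueness among $L^p$–solutions of the ODE itself follows because any such solution has vanishing $x^{-(s+1)}$–component (that function is not in $L^p$ for $p>\frac1{s+1}$), hence equals $L_0^{-1}$ applied to the right side, i.e.\ is a fixed point of $\Phi$.

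For the second equation I would read off from $xf(x)v'+(s+1)G(x,v)=0$ the pointwise identity $xv'=-\frac{s+1}{f(x)}\big(v+G(\cdot,0)+N_G(x,v)\big)$, so it suffices to control $\|v\|_{L^p}$. Rewriting the equation as $L_0 v=xv'+(s+1)v=-x(f-1)v'-(s+1)\big(G(\cdot,0)+N_G(x,v)\big)$ and substituting the expression for $xv'$ into the first term, then applying $L_0^{-1}$, yields a fixed point equation $v=\Psi(v)$ whose nonlinear part carries an explicit factor $f-1$ or $N_G$ and is therefore $C(s,p)\,\delta$–Lipschitz on $L^p$; hence a contraction for small $\delta$, with $\|v\|_{L^p}\lesssim\|\Psi(0)\|_{L^p}\lesssim\|G(\cdot,0)\|_{L^p}$. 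Feeding this back into the pointwise identity gives $\|xv'\|_{L^p}\le\frac{s+1}{1-\delta}\big((1+\delta)\|v\|_{L^p}+\|G(\cdot,0)\|_{L^p}\big)\lesssim\|G(\cdot,0)\|_{L^p}$, and uniqueness follows exactly as before.

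The main obstacle is the first step: pinning down the sharp range $p>\frac1{s+1}$ for the weighted Hardy bound on $L_0^{-1}$, and in particular handling the sub-$L^1$ case $p\in(\frac1{s+1},1)$ (which can occur for $1<m\le2$) where Minkowski no longer applies and one must use the homogeneous–kernel form of Hardy's inequality. Everything after that is a routine Banach fixed point plus bookkeeping of the nonlinear terms; the one subtlety to keep in mind is that the ODE has to be read in the integrated form $xF(x,v(x))=-s\int_0^x G(t,v(t))\,dt$ at the degenerate endpoint $x=0$, which is precisely what selects the $L_0^{-1}$ branch and makes the solution unique.
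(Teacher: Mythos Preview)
Your proposal is correct and follows essentially the same route as the paper: explicitly invert the linear model operator $L_0v=(xv)'+sv$ via the integrating factor $x^s$, prove the Hardy-type bound $\|L_0^{-1}g\|_{L^p}\le(s+1-\tfrac1p)^{-1}\|g\|_{L^p}$ for $p>\tfrac1{s+1}$ (which forces the coefficient of the homogeneous solution $x^{-(s+1)}$ to vanish), and then run a contraction argument on the $\delta$-small nonlinear part. The only cosmetic difference is in the proof of the Hardy bound---the paper obtains it by interpolating between the trivial $L^\infty$ estimate and the $L^1$ estimate (via Fubini), whereas you scale $t=xu$ and apply Minkowski; your observation that the sub-$L^1$ range $p\in(\tfrac1{s+1},1)$ needs the homogeneous-kernel form of Hardy rather than Minkowski is a point the paper glosses over (though in the actual application $p\ge 2(n+1)$, so it never arises).
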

\begin{proof}
The linear equation
\[   (x v)' +  s v = (xg)' + f \]
has the general solution - at least for regular $f$ and $g$ -
\[  x^{-1-s}  \int_0^x y^{s} (f(y)-s g(y))   dy + c x^{-1-s}. \]
We claim that
\begin{equation} \label{claimint}  \Big\Vert  x^{-1-s}  \int_0^x y^{s} f(y) dy \Big\Vert_{L^p}
\le \frac{1}{1+s-\frac1p} \Vert f \Vert_{L^p}    \end{equation}
which implies that $c=0$ for every solution in $L^p$. An easy fixed point argument implies the first estimate and uniqueness, and existence in $L^p$ and uniqueness in the second part.
 The equation shows that in the second part
\[  \Vert x_n v' \Vert_{L^p} \le     \Vert G(.,0) \Vert_{L^p}.  \]
It remains to verify claim \eqref{claimint}. We prove the more general inequality for $s>0$
\[ \Big\Vert x^{-1-\frac1p -s}    \int_0^x y^{s+\frac1p} dx \Big\Vert_{L^p} \le \frac1{s+1} \Vert f \Vert_{L^p}. \]
It follows by interpolation from the trivial estimates
\[   \left| x^{-1-s} \int_0^x y^{s} f(y)\right| \le  \frac1{s+1}   \Vert f \Vert_{L^\infty} \]
and
\[   y^{1+s} \int_{y}^\infty x^{-s-2} dx  = \frac1{s+1}. \]
  \end{proof}

\begin{lemma} \label{interpol}
	Let $ K \subset J \subset \R $ be an open intervals with
$ |K| \ge  |J|/2 $.
	Further let
	\[ \eta: J \to [0,1] \]
have compact support, $\eta|_K=1$ with $\eta^{-1}([t,\infty)) $ connected
for $0\le t \le 1$. Suppose $\vartheta \ge 0$ and  $ u \in C^2(J) \cap L^p(J)$.
Then there exists a constant $ c = c(p) > 0 $ such that
\[ \Vert y^\vartheta \, \eta \, u' \Vert_{L^p(J)}^2 \leq c
\Vert y^\vartheta \, u \Vert_{L^p(J)} \left( |J|^{-2} \Vert y^\vartheta \eta^2 u \Vert_{L^p(J)} +  \Vert y^\vartheta \, (\eta^2 \, u)'' \Vert_{L^p(J)} \, \right).  \]
\end{lemma}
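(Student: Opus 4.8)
The plan is to view this as a weighted, $\eta$-localized Gagliardo--Nirenberg inequality on a one-dimensional interval, and to prove it in two steps.

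\emph{Step 1: a single-interval weighted interpolation inequality.} First I would prove that for any bounded interval $I$ and any $v \in C^2(I) \cap L^p(I)$,
\[
\Vert y^\vartheta v' \Vert_{L^p(I)}^2 \le c_p \Big( |I|^{-2} \Vert y^\vartheta v \Vert_{L^p(I)}^2 + \Vert y^\vartheta v \Vert_{L^p(I)} \, \Vert y^\vartheta v'' \Vert_{L^p(I)} \Big).
\]
For $\vartheta = 0$ this is the classical one-dimensional estimate $\Vert v' \Vert^2 \lesssim \Vert v \Vert \Vert v'' \Vert + |I|^{-2}\Vert v \Vert^2$, obtained from the exact Taylor identity $v'(x) = h^{-1}(v(x+h) - v(x)) - h^{-1}\int_x^{x+h}(x + h - r)v''(r)\,dr$: take the $L^p$-norm in $x$ over each half of $I$ with $h$ comparable to $|I|$ (forward increment on the left half, backward on the right), estimate the increment term by the triangle inequality and the remainder by H\"older. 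The power weight $y^\vartheta$, $\vartheta \ge 0$, is carried along by working instead from the identity produced by integrating $\int_I y^{\vartheta p}|v'|^p$ by parts against $v$: this gives the main term $(p-1)\int_I y^{\vartheta p} |v'|^{p-2} v'' v$, which H\"older bounds by $(p-1)\Vert y^\vartheta v'' \Vert_{L^p}\Vert y^\vartheta v \Vert_{L^p}\big(\int_I y^{\vartheta p}|v'|^p\big)^{1 - 2/p}$, plus a term carrying the weight derivative $\vartheta y^{-1}$ which is disposed of by a one-dimensional Hardy inequality; the $|I|^{-2}$ term comes from the boundary/Poincar\'e contribution when $I$ is bounded.

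\emph{Step 2: insertion of the cutoff $\eta$.} I would integrate $\int_J y^{\vartheta p} \eta^p |u'|^p$ by parts against $u$ and rewrite $\eta^p u''$ through $(\eta^2 u)''$, picking up the cutoff-derivative terms $\eta \eta' u'$ and $(\eta'^2 + \eta \eta'') u$. H\"older then bounds the main term exactly as in Step 1 by $c \, \Vert y^\vartheta u \Vert_{L^p(J)} \, \Vert y^\vartheta (\eta^2 u)'' \Vert_{L^p(J)} \, \big(\int_J y^{\vartheta p} \eta^p |u'|^p\big)^{1 - 2/p}$, which after absorbing the power of the left-hand side reproduces the second term on the right of the lemma (using $\eta \le 1$ to pass from $\Vert y^\vartheta \eta^2 u \Vert$ to $\Vert y^\vartheta u \Vert$). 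The cutoff-derivative terms are then reabsorbed: since $2\eta'^2 u$ and $2\eta\eta''u$ are themselves summands of $(\eta^2 u)''$, a bootstrap dominates $\Vert y^\vartheta \eta'^2 u \Vert_{L^p(J)}$ and its relatives by $\Vert y^\vartheta (\eta^2 u)'' \Vert_{L^p(J)}$ plus a small multiple of $\int_J y^{\vartheta p}\eta^p |u'|^p$ (absorbed), while the lowest-order term yields the remaining $|J|^{-2} \Vert y^\vartheta \eta^2 u \Vert_{L^p(J)}$ contribution once one uses that $\eta \equiv 1$ on $K$ with $|K| \ge \tfrac12 |J|$ to run a Poincar\'e inequality at the scale $|J|$. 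The hypothesis that every superlevel set $\{\eta \ge t\}$ is connected enters here to force $\eta$ to be non-decreasing and then non-increasing, so that $\eta'$ is one-signed on each side of $K$ and the integrations by parts telescope without uncontrolled oscillation of $\eta'$, $\eta''$.

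\emph{Expected main obstacle.} The crux is Step 2: reconciling the left-hand side $\eta u'$ with a right-hand side phrased through $\eta^2 u$ and $(\eta^2 u)''$ rather than $u$ and $u''$. One cannot solve for $\eta u'$ from $(\eta^2 u)'$ without dividing by $\eta$, so the contributions $\eta' u$, $\eta'' u$ must be controlled with \emph{no} a priori bound on $\Vert \eta' \Vert_\infty$; what makes this work is that a steep $\eta$ forces $(\eta^2 u)''$ to be correspondingly large, so the two sides of the inequality degrade together, and the task is to turn this into absorbable estimates. A secondary book-keeping difficulty is keeping every constant independent of $\vartheta$, which dictates that the weight derivative $\vartheta y^{-1}$ be handled only in the scale-covariant form $(y^\vartheta)' = \vartheta y^{\vartheta-1}$ and removed by Hardy-type inequalities at each integration by parts rather than by crude estimation.
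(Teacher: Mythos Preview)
Your Step 2 has a genuine gap. The bootstrap you propose---controlling $\|y^\vartheta \eta'^2 u\|_{L^p}$ and $\|y^\vartheta \eta\eta'' u\|_{L^p}$ by the fact that $2(\eta'^2+\eta\eta'')u$ is a summand of $(\eta^2 u)''$---does not close: solving for that summand forces you to subtract $\eta^2 u''$, which is nowhere under control (it is neither on the left of the lemma nor on the right). The same obstruction hits the term coming from the $\eta$-derivative in your integration by parts: differentiating $\eta^p$ produces $p\eta^{p-1}\eta'$, and after H\"older you are left with $\|y^\vartheta \eta' u\|_{L^p}$, not $\|y^\vartheta \eta\eta' u\|_{L^p}$; there is no way to recover the missing factor of $\eta$. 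Your heuristic that ``a steep $\eta$ forces $(\eta^2 u)''$ to be correspondingly large'' is correct, but it does not isolate the individual cutoff-derivative pieces. A secondary problem is your Hardy treatment of the weight in Step 1: the $\vartheta y^{-1}$ term yields, after Hardy, a contribution $\vartheta p\,C_{\vartheta,p}\,\|y^\vartheta v'\|_{L^p}^p$, which cannot be absorbed for large $\vartheta$ and whose constant degenerates near $\vartheta=1-1/p$; the lemma asserts $c=c(p)$.

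The paper sidesteps both difficulties by never differentiating the weight or the cutoff, using Fubini twice instead. For the weight: write $y^{\vartheta p}=x_0^{\vartheta p}+p\vartheta\int_{x_0}^{y}x^{p\vartheta-1}\,dx$, swap integrals, and apply the \emph{unweighted} inequality on each half-line $[x,\infty)$; this gives the weighted half-line estimate with constant independent of $\vartheta$, and extension gives Step~1 on bounded intervals. For the cutoff: use the layer-cake representation $\int_J y^{\vartheta p}\eta^p|u'|^p=\int_0^1\!\int_{\{\eta>t^{1/p}\}}y^{\vartheta p}|u'|^p\,dx\,dt$ and apply the Step~1 interval estimate on each superlevel set. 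This is the real role of the hypothesis that $\eta^{-1}([t,\infty))$ is connected---not sign control on $\eta'$, but that each superlevel set is a single interval on which Step~1 applies; since each contains $K$, their lengths are uniformly comparable to $|J|$, which is where the $|J|^{-2}$ prefactor comes from.
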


\begin{proof}  We claim
\begin{equation} \label{FriedmanInterpol}
	\Vert u' \Vert_{L^p(\R)}^2 \lesssim_p \Vert u \Vert_{L^p(\R)} \, \Vert u'' \Vert_{L^p(\R)}
\end{equation}
for $u \in W^{2,p}$  which is contained in \cite{FriedmanPDE}. It can be proven by an integration by parts argument if $p \ge 2$:
\[ \int (u')^p dx = \int |u'|^{p-2} u' u' dx = - \int (p-1) |u'|^{p-2} u'' u dx
\le (p-1)\Vert u' \Vert_{L^p}^{p-2} \Vert u'' \Vert_{L^p} \Vert u \Vert_{L^p} \]
by dividing by $\Vert u' \Vert_{L^p}^{p-2}$. A simple extension argument shows that
\begin{equation} \label{unweighted}  \Vert u' \Vert_{L^p(0,\infty)}^2 \le 3(p-1) \Vert u \Vert_{L^p(0,\infty)} \Vert u'' \Vert_{L^p(0,\infty)} \end{equation}
for $p\ge 2$ - if $1\le p< 2$ the constant will be different.  For $\vartheta>0$, by an application of Fubini, for $a>0$ and $x_0\ge 0$
\[
\begin{split}
 \Vert x^\vartheta u' \Vert_{L^p([x_0,\infty))}^p - x_0^{p\vartheta} \Vert u' \Vert_{L^p([x_0,\infty))}^p
=   \int_{x_0}^\infty  x^{p\vartheta -1} \Vert u' \Vert^p_{L^p([x,\infty))} dx
\hspace{-6cm} & \\ \le & 3(p-1) \int_{x_0}^\infty  x^{p\vartheta-1} \Vert u'' \Vert_{L^p([x,\infty))}^{p/2}
\Vert u \Vert_{L^p([x,\infty))}^{p/2} dx
\\ \le & 3(p-1) \int_{x_0}^\infty x^{p\vartheta-1} \left(  \frac{a}2 \Vert u'' \Vert^p_{L^p([x,\infty))}
+ \frac1{2a} \Vert u \Vert^p_{L^p([x,\infty))}\right)  dx
\\ = & (3(p-1))^{p/2} \Big( \frac{a}{2} \Big( \Vert  x^{\vartheta}|u''|\Vert^p_{L^p([x_0,\infty)) }  - \Vert x_0^{\vartheta} u'' \Vert^p_{L^p([x_0,\infty))}\Big)
\\ & + \frac{1}{2a}\Big( \Vert  x^{\vartheta} |u|^p \Vert_{L^p}^p - \Vert x_0^\theta u \Vert_{L^p([x_0,\infty))}^p
\Big).
\end{split}
\]
 We optimize $a$ and combine the estimate with \eqref{unweighted} to arrive at
\[ \Vert x^\vartheta u' \Vert_{L^p(x_0,\infty)}^2 \le 3(p-1) \Vert x^\vartheta u \Vert_{L^p(x_0,\infty)} \Vert x^{\vartheta} u'' \Vert_{L^p(x_0,\infty)}. \]
By extension to the right we obtain
\begin{equation} \label{OurInterpol}
	\Vert x^\vartheta u' \Vert_{L^p(I)}^2 \lesssim_p \Vert x^\vartheta u \Vert_{L^p(I)} \left( \, |I|^{-2} \, \Vert x^\vartheta u \Vert_{L^p(I)} + \Vert x^\vartheta u'' \Vert_{L^p(I)} \, \right)
\end{equation}
for any bounded interval $ I $. Again using Fubini
\[ \begin{split}
\Vert y^\vartheta \eta u' \Vert_{L^p}^p
= & \int_0^1 \Vert y^\vartheta u' \Vert_{L^p( \eta > t^{1/p})} dt
\\ \le & c\int_0^1 \Vert y^\vartheta u \Vert_{L^p(\eta> t^{1/p} )}^p
\Big( |J|^{-2} \Vert y^\vartheta u \Vert_{L^p(\eta> t^{1/p})}
+ \Vert y^\vartheta u''  \Vert_{L^p(\eta> t^{1/p})}\Big) dt
\\ \lesssim & \Vert y^\vartheta  u \Vert_{L^p(J)} \Big(
|J|^{-2} \Vert y^\vartheta \eta^2 u \Vert_{L^p(J)} +
\Vert y^\vartheta (\eta^2 u)'' \Vert_{L^p(J)} \Big),
\end{split}
\]
we complete the proof of the lemma.
\end{proof}

The third tool  is a Calder\'on-Zygmund type estimate, \cite[Proposition 3.23]{Kienz14}:

\begin{lemma}\label{CZ}  Suppose that $p>(1+\sigma)^{-1}  $ and
\[ f \in L^p(\R \times H).  \]
Then there exists $v$ satisfying
\[ \Vert D_{s,y} v \Vert_{L^p(\R \times H)} + \Vert x_n D^2_x v \Vert_{L^p(\R \times H)} \le c \Vert f \Vert_{L^p(\R\times H)} \]
and
\[  v_t - x_n \Delta v -\sigma \partial_n v = f. \]
Moreover $v$ is unique up to the addition of a constant.
Similarly, if $ F^i \in L^p$ then there is a unique $v$ with $ \nabla v \in L^p$
and
\[ v_t - x_n \Delta v - \sigma \partial_n v = x_n^{-s} \partial_i  x_n^{1+s} F^i. \]
\end{lemma}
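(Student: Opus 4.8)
The plan is to obtain the estimate by representing the solution through the fundamental solution of the model operator $\dt-L_\sigma$ of Section~\ref{sec.vonmises}, and then recognizing the maps $f\mapsto Dv$ and $f\mapsto x_nD^2_xv$ as singular integral operators on the space of homogeneous type $X=(\R\times\overline H,\,d,\,d\mu)$, where $d$ is the parabolic intrinsic (Carnot--Carath\'eodory) distance generated by $\dt$ together with the vector fields $x_n^{1/2}e_j$, and $d\mu=x_n^\sigma\,dx\,dt$ is the associated doubling measure. After reducing by density to $f$ smooth with compact support in $\{x_n>0\}$, the argument has three parts: an $L^2$ bound, its upgrade to $L^p$, and the two routine consequences (uniqueness and the divergence form).

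\emph{The $L^2$ estimate.} With $d\mu=x_n^\sigma\,dx$ one has $-L_\sigma u=-x_n^{-\sigma}\partial_i(x_n^{1+\sigma}\partial_iu)$, so $-L_\sigma$ is a nonnegative self-adjoint operator on $L^2(\mu)$ (the boundary terms in the integration by parts vanish because $1+\sigma>0$); hence $-\dt+L_\sigma$ generates a bounded analytic semigroup and $v=\int_{-\infty}^t e^{(t-s)L_\sigma}f(s)\,ds$ solves the equation. Testing against $v$ and against $-L_\sigma v$ and using the Hardy inequality in the $x_n$-direction gives $\|Dv\|_{L^2(\mu)}+\|x_nD^2_xv\|_{L^2(\mu)}\lesssim\|f\|_{L^2(\mu)}$; equivalently, a Fourier transform in the non-degenerate variables $(t,x')$ reduces the equation to the family of ODEs $-x_n\widehat v''-(1+\sigma)\widehat v'+(i\tau+|\xi'|^2x_n)\widehat v=\widehat f$ in $x_n$, which is the model equation of Lemma~\ref{verticalderivative} perturbed by the lower-order term $(i\tau+|\xi'|^2x_n)$, and the $p=2$ case of that lemma — with the Hardy bound $\|x^{-1-\sigma}\int_0^xy^\sigma f\|_{L^2}\le(\tfrac12+\sigma)^{-1}\|f\|_{L^2}$ — gives the estimate uniformly in $(\tau,\xi')$.

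\emph{From $L^2$ to $L^p$.} After the substitution $x_n=\rho^2/4$ the $x_n$-part $x_n\partial_n^2+(1+\sigma)\partial_n$ of $L_\sigma$ becomes a fractional-dimensional (Bessel) radial Laplacian, so the heat kernel of $\dt-L_\sigma$ is, up to this substitution, a Gaussian in $(t,x')$ times a modified-Bessel kernel in $\rho$; from it one reads off pointwise bounds for the kernel and for $\partial_{t,x}$ and $x_n\partial^2_x$ of it of exactly the Gaussian type of Lemma~\ref{Gaussian}, together with the matching regularity (cancellation) estimates in the metric $d$. Therefore $f\mapsto Dv$ and $f\mapsto x_nD^2_xv$ are Calder\'on--Zygmund operators on $X$, bounded on $L^2(\mu)$ by the previous step, and the Calder\'on--Zygmund theorem on spaces of homogeneous type gives their boundedness on $L^q(\mu)$ for $1<q<\infty$. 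Passing from the weighted $L^q(\mu)$ to the unweighted $L^p(\R\times H)$ of the statement, and in particular reaching the full range $p>(1+\sigma)^{-1}$ (which dips below $1$ when $\sigma>0$), is the delicate point: one separates off the degenerate variable $x_n$, in which the kernel acts as a genuine averaging operator rather than a singular one, and invokes the sharp one-dimensional estimate of Lemma~\ref{verticalderivative}, whose $L^p$-boundedness threshold is precisely $p>(1+\sigma)^{-1}$ — this is exactly where that hypothesis originates.

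\emph{Consequences.} Uniqueness up to an additive constant is immediate, since two solutions in the stated class have the same $Dv\in L^p$. For the divergence-form datum $f=x_n^{-\sigma}\partial_i(x_n^{1+\sigma}F^i)$ one runs the same kernel argument with one derivative transferred onto the kernel — harmless in the tangential variables and, in the $x_n$-direction, again controlled by Lemma~\ref{verticalderivative} — or simply dualizes against the $L^{p'}$-solution operator already constructed. The step I expect to be the real obstacle is the second one: verifying the Calder\'on--Zygmund cancellation estimates for the kernel in the intrinsic metric $d$ uniformly up to the degenerate boundary $\{x_n=0\}$, and pushing the conclusion into the sub-$1$ range of $p$. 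This is where the explicit Bessel structure of the kernel and the one-dimensional bounds of Lemma~\ref{verticalderivative} are indispensable, and it is the content of \cite[Proposition 3.23]{Kienz14}.
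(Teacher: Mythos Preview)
The paper does not give a proof of this lemma at all: it is stated as a tool and attributed directly to \cite[Proposition~3.23]{Kienz14}. Your proposal correctly identifies this source in its last line, and the sketch you give --- $L^2$ bound via the self-adjoint structure of $-L_\sigma$ on $L^2(x_n^\sigma\,dx)$, then Calder\'on--Zygmund theory on the space of homogeneous type $(\R\times\overline H,\,d,\,x_n^\sigma\,dx\,dt)$ using Gaussian bounds on the heat kernel of $\partial_t-L_\sigma$, with the one-dimensional Lemma~\ref{verticalderivative} governing the vertical variable and the threshold $p>(1+\sigma)^{-1}$ --- is indeed the architecture of the argument in \cite{Kienz14}. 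So there is nothing to compare: you have reconstructed, at the level of an outline, the proof the paper defers to.

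One remark on your sketch: the passage to $p<1$ that you flag as ``the delicate point'' is genuinely subtle, and the mechanism you describe (separating off $x_n$ and invoking the averaging-operator bound of Lemma~\ref{verticalderivative}) is the right heuristic but not by itself a proof --- one still has to interface this with the singular-integral part in the tangential and time variables, and standard Calder\'on--Zygmund theory on homogeneous spaces only delivers $1<q<\infty$ in the weighted space. In \cite{Kienz14} this is handled by working systematically with the intrinsic measure and metric and by a careful kernel analysis; your outline is consistent with that, but the reader should not come away thinking the $p<1$ range follows from the ingredients you name without further work.
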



\subsection{Improved estimates}

After this preparation we prove Lemma \ref{lem:local}
for solutions satisfying \eqref{TPE} and \eqref{TPE_coeff}.
Since we can choose $ \mu $ smaller if need be,
we can assume that $ \mu_1 $ and hence $ |\grady u| $ are as
small as we like.
Then also
\[ |a^{ij} - \delta^{ij}| + |b^j - \delta^{nj}| \leq C  |\grady u| \leq C\mu_1, \] with a constant depending only on the space dimension, $m$
and
\[ |\gradsy a^{ij}| + |\gradsy b^{j}| \le  C  (1 + |\grady u |) \, |\grady \gradsy u| \leq C (1 + \mu_1) \, |\grady \gradsy u| \]
on $ (s_1,s_2) \times \overline{H} $.

We define a cutoff function
\[ \eta(s,y) =  s \ \prod_{j=1}^{n} \eta_0(y_j) \]
where
\[ \eta_0 \in C^\infty_0(-2,2),\quad \eta_0 \le 1, \quad  \eta_0=1 \text{ on } (-1,1) \quad
 (\eta_0)^{-1}(\{ (\sigma,\infty) \})  \text{ is connected for } \sigma \le 1. \]

  A direct calculation shows that
\[ w \ldef \eta^2 \, u  \] satisfies the equation
\begin{equation} \label{truncated}
		\ds w - y_n \, \lapy w - (1 + \sigma) \, \dyn w = F[u,w,\eta]
\end{equation}
on $ I_r \times \overline{H} $ with zero initial value, where
\begin{equation*} \label{truncatedInhom}
\begin{split}
	F[u,w,\eta] \ldef y_n & \, (a^{ij} - \delta^{ij}) \, \dij w + (1 + \sigma) \, (b^j - \delta^{nj}) \, \dj w \\
		& + u \left( \ds (\eta^2) - y_n \, a^{ij} \, \dij (\eta^2) - (1 + \sigma) \, b^j \, \dj (\eta^2) \right) \\
		& - y_n \, (a^{ij} + a^{ji}) \, \di (\eta^2) \, \dj u.
\end{split}
\end{equation*}
We fix $p =2 \max\{ (1+\sigma)^{-1},n+1\}$,
 apply Lemma \ref{CZ} to the function $w$ in  \eqref{truncated}
and obtain
\[
\begin{split}
 \Vert \gradsy w \Vert_{L^p(\R \times H)} + \Vert y_n \, D^2_y w \Vert_{L^p(\R \times H)} \lesssim_{n,\sigma}&   \Vert F[u,w,\eta] \Vert_{L^p(\R \times H)}
\\ \lesssim_{n,\sigma} & \mu \left( \Vert \gradsy w \Vert_{L^p(\R \times H)} + \Vert y_n \, D^2_y w \Vert_{L^p(\R \times H)}\right)
\\ & +  \Vert |\partial_s \eta^2|  + |y_n D^2_y \eta^2|+ |D_y \eta^2| \Vert_{L^\infty(\R \times H)} \Vert u \Vert_{L^\infty}
\\ & + \Vert y_n D_y (\eta^2) D_y u \Vert_{L^p}.
\end{split}
\]
By Lemma \ref{interpol}
\begin{equation}\label{interpolation1}    \Vert y_n D_y (\eta^2) D_y u \Vert^2_{L^p}
\lesssim \Big(\Vert u \Vert_{L^\infty} + \Vert y_n D^2_x (\eta^2 u) \Vert_{L^p}\Big)  \Vert u \Vert_{L^\infty}
\end{equation}
hence
\begin{equation}   \Vert y_n D_y (\eta^2) D_y u \Vert_{L^p}
\lesssim \varepsilon \Vert y_n D^2_x w  \Vert_{L^p}
+ C(\varepsilon) \Vert u \Vert_{L^\infty}.
\end{equation}

The first term and the third term on the right hand side can be controlled by the left
 hand side if we choose $\varepsilon$ and $\mu$ sufficiently small, $\mu,\varepsilon \le \mu_0$ with $\mu_0$ depending only on $n$ and $ \sigma$, which is a function of
 $m$. The first factor of the second term is bounded and gives the
 desired supremum norm of $u$.
 Altogether
\begin{equation}  \Vert \nabla_{s,y} u \Vert_{L^p([1,2]\times B_1(0)\cap H )}
+ \Vert y_n D^2_y u \Vert_{L^p([1,2]\times B_1(0)\cap H )} \le c
\Vert u \Vert_{L^\infty((0,2)\times B_2(0)\cap H}.
\end{equation}

\subsection{Higher order derivatives}
We deal inductively with higher order derivatives. We assume that $k\ge 0$
and
\begin{equation}   \Vert D_{t,y}^{k+1} u \Vert_{L^p((0,2] \times B_2(0))}
+  \Vert y_n D^2_x  D_{t,y}^k \Vert_{L^p((0,2) \times B_2(0))}
\le \delta \end{equation}
and we claim that then
\[   \Vert D_{t,y}^{k+2} u \Vert_{L^p((0,2] \times B_2(0))}
+  \Vert y_n D^2_x  D_{t,y}^{k+1} \Vert_{L^p((0,2) \times B_2(0))}
\lesssim  \delta. \]
Let $\alpha$ be a multi-index of length $k$ and $v = \eta^2\partial^\alpha u$. It satisfies
\begin{equation} \label{truncated2}
		\ds w - y_n \, \lapy w - (1 + \sigma) \, \dyn w = F_\alpha[u,w,\eta]
\end{equation}
where
\begin{equation*} \label{truncatedInhom2}
\begin{split}
	F_\alpha[u,w,\eta] \ldef y_n & \, (a^{ij} - \delta^{ij}) \, \dij w + (1 + \sigma) \, (b^j - \delta^{nj}) \, \dj w \\
		& + \partial^\alpha u \left( \ds (\eta^2) - y_n \, a^{ij} \, \dij (\eta^2) - (1 + \sigma) \, b^j \, \dj (\eta^2) \right) \\
		& - y_n \, (a^{ij} + a^{ji}) \, \di (\eta^2) \, \dj \partial^\alpha u \\
& + G_\alpha
\end{split}
\end{equation*}
where $G_\alpha$ contains all the terms with at least two factors with
at least two derivatives.

We begin with the considerations for $|\alpha|=1$, $1\le i \le n-1$. Then
\[
\begin{split}
\Vert G_\alpha \Vert_{L^p} \lesssim  &  \Vert y_n \eta^2 D^2 u D\partial_i u \Vert_{L^p} + \Vert y_n \eta D\eta Du D \partial_i u \Vert_{L^p}
\\  \lesssim  & c \Vert    D^2 \eta^2 u \Vert_{L^p} \Vert x_n D^2_x u \Vert_{L^\infty}\\ \lesssim & c \mu \Vert x_n \eta^2 D^3_y u \Vert_{L^p}.
\end{split}
\]
The very same argument works for $w= \eta^2 \partial_t u$. It is easy to make the argument rigorous by using finite differences.

Let $D'=D'_{t,x}$ denote all derivatives besides the one in direction $e_n$. Then we have proven
\[  \Vert D_{t,x}  D' u \Vert_{L^p([0,1] \times B_1(0))}
+ \Vert x_n D^2_{x}  D' u \Vert_{L^p([0,1] \times B_1(0))}
\le c \mu.
\]

To control the vertical derivative we recall that
\begin{equation}  \partial_n \Big[x_{n}\frac{\partial_n u + |\nabla' u|^2}{1+ \partial_n u } \Big]+  s \Big[x_{n}^{s+1}\frac{\partial_n u + |\nabla' u|^2}{1+ \partial_n u } \Big] =  u_t - x_n \Delta' u
\label{vertical}
\end{equation}
hence, with $v= \partial^2_n u$
\[ \partial_n \Big[ x_na^{nn}  v\Big]  + (s+1) a^{nn} v  )
= \partial^2_{nt} u -  \Delta' u - x_n \Delta' u
-\partial_n \Big[ x_na^{n\alpha} \partial^2_{\alpha n} u  \Big]
- s a^{n\alpha} \partial^2_{n\alpha} u.
 \]
By Lemma \ref{verticalderivative}
\[ \Vert Dv \Vert_{L^p} \lesssim  \Vert DD'u \Vert_{L^p} + \Vert x_n D^2 u \Vert_{L^p}.
\]
Now we rewrite the derivative term on the right hand side
\[\partial_n \Big[ x_na^{n\alpha} \partial^2_{\alpha n} u  \Big]
= a^{n\alpha} \partial^2_{\alpha n} u + x_n D^2 u D D' u + x_n a^{n\alpha} \partial^2_{\alpha n} v, \]
and use the second estimate to conclude
\[ \Vert x_n D^2 v \Vert_{L^p} \le c \Vert DD'u \Vert_{L^p} + \Vert x_n D^2 u \Vert_{L^p}.
\]
We may choose $p$ as large and hence
\[ \Vert u \Vert_{C^{1,\alpha}} + \Vert x_n D^2 u \Vert_{sup} \le \mu. \]

We iterate the argument. For second tangential derivatives we have to bound terms like
\[  \Vert x_n D^2 u D^2 u  \Vert_{L^p} \le \Vert x_n D^2 u \Vert_{L^\infty}
\Vert D^2 u \Vert_{L^p} \le c \mu \Vert x_n D^3 u \Vert_{L^p} \]
with at least one tangential derivative (or difference quotient for the rigorous proof) on the RHS. Next we differentiate \eqref{vertical} twice in the vertical
direction. The details are similar but simpler than  before.

\section{Estimates for global compactly supported solutions}\label{sec.thm2}

The proof of Theorem 2 follows from the preceding analysis but it is
not immediate because we want to establish $C^\infty$ regularity after
a time $T(\rho_0)$ that can be estimated in terms of simple
information on the initial data. This is a rather precise result that
needs a careful quantitative analysis. Such an analysis is interesting in
itself. It occupies this section as follows.  First, we review the
Barenblatt solutions, and the  qualitative result on convergence
of any nonnegative, integrable solution $\rho(t,x)$ towards the
Barenblatt profile with the same mass. Then we start the quantitative
analysis of sizes of solutions and location of the free
boundaries. These solutions do resemble the Barenblatt solution but
for constant factors that must be controlled. The theory of entropy
and entropy dissipation allows us to transform this resemblance into
convergence with rate as time goes to infinity in a very precise
way. We use these results
and the flatness criterion of Theorem \ref{th-main} to obtain
$C^\infty$ regularity for large times.  We recall the definition of $\lambda$ in \eqref{lambda}.

\subsection{The Barenblatt solution and plain  asymptotic convergence}

The Barenblatt solution is given by the formula

\begin{equation} \label{Barenblatt}
{\mathcal B}(t,x;M) =   t^{-n\lambda}   F(\sqrt{\lambda} xt^{-\lambda}), \qquad
F (\xi)= \Big( A^2 - \frac{|\xi|^2}2 \Big)^{\frac1{m-1}}_+
\end{equation}
where $\lambda$ is the above-mentioned constant and $A>0$ is a free constant that can be easily calculated in terms of the mass $M$ of the solution,
\begin{equation}\label{relmass}
A=aM^{(m-1)\lambda}\,,
\end{equation}
and $a=a(m,n)$ is given by
\begin{equation}  a =  \Big(\int_{B_{\sqrt{2}}(0)} (1-\frac12 |\xi|^2)^{\frac1{m-1}} d\xi \Big)^{\frac1{n\lambda}}  \label{defa}     \end{equation}
see the details in \cite{Vsmooth}, Section 2.1. Recall that writing the equation as $\rho_t=k\Delta \rho^m$ with a constant $k=(m-1)/m$ changes the coefficient in the expression of $F$ written in that reference (in fact, it simplifies it).

The Barenblatt solutions play an important role in describing general global nonnegative solutions for large times. This is reflected in the result on asymptotic convergence of any global solution $\rho(t,x) $ of the PME with nonnegative, integrable initial data towards the Barenblatt solution with same mass $M$, cf. \cite{vazquezPME}, Theorem 18.1. The uniform convergence result, formula (18.7) of that reference, says that
\[
\lim_{t\to\infty} t^{n\lambda} \sup_x |\rho(t,x)-\mathcal{B}(t,x;M)|=0\,,
\]
with $\lambda$ as defined above and $M=\int_{\Rn}\rho_0(x)\,dx>0$. It is known that this initial mass is conserved in time.

It is convenient to reformulate the result as uniform convergence in space-time around the time $t=1$ for some rescaled solutions. Indeed, if we define for $k>1$ the family of rescalings
\begin{equation}\label{k.scale}
\rho_k(t,x)=k^{n\lambda}\rho(kt, k^{\lambda}x)\,,
\end{equation}
then the $\rho_k$ are again solutions of the PME with the same mass $M$. The previous convergence result can be equivalently stated as the uniform convergence
\begin{equation}\label{resc.conv}
\lim_{k\to\infty}  |\rho_k(t,x)-\mathcal{B}(t,x,M)|=0\,
\end{equation}
in every cylinder of the form $ \widehat Q=(t_1,t_2)\times \Rn$ with $0<t_1<t_2$.
It is also proved that for compactly supported data the free boundary $\Gamma_k$ of $\rho_k$ converges uniformly to the free boundary $\Gamma(\mathcal{B})$ of $\mathcal{B}$, which is  an expanding ball with  radius
\begin{equation}
{\mathcal R}(t)=R_B (M^{(m-1)}t)^{\lambda}
\end{equation}
with constant $R_B=a(2/\lambda)^{1/2}$, a function of $m$ and $n$.

\subsection{Quantitative question. Reduction and first bounds}

We need to prove a quantitative version of the error that is committed in such approximation for a class of initial data and for $t$ large enough. This is better done after some reduction of the problem based on the scale invariance of the equation. Let $M=\int_{\Rn}\rho_0(x)\,dx>0$ be the mass of the initial data, that is
conserved in time, and let $\rho_0$ be supported in the ball of radius $R$.
Then, by defining
\[
\widetilde \rho(t,x)  =   M^{-1}  \rho(M^{1-m} R^{2} t, R x)
\]
we get yet a solution of the same PME, but now it has mass $1$ and $\widetilde \rho(0,x)$ is supported in $B_1(0)$. Thanks to this transformation we may assume
without loss of generality that $\rho_0$ has mass $1$, and is supported in a ball of radius $1$. Also, by space translation we may also assume that $\rho_0$ has center of mass $x=0$. Let us call this class of solutions $\mathcal C$.
%
%
%

 Let us start by obtaining uniform estimates for the whole class  $\mathcal C$, and let us see how these estimates look like the values for $\mathcal B(t,x;1)$, at least when $t$ is large enough.

\subsubsection{Sup estimates for the solutions} The so-called $L^1$--$L^\infty$ smoothing effect with best constant says that
\begin{equation}\label{sup.up}
\sup_x \rho(t,x)  \le  \mathcal{B}(t,0;1) =    A^{\frac2{m-1}}    t^{-n\lambda }\,,
\end{equation}
 cf.  \cite{Vsym}. This holds for every $t>0$.

\subsubsection{Bounds from below} The next step consists in deriving lower bounds on the solution $\rho(t,x)$. We use the results by Aronson and Caffarelli in \cite{AC83}, according to which there exists a constant $C>0$ depending only on $n$ and $m$ such that every nonnegative global solution satisfies the inequality (rather, family of inequalities)
\begin{equation}\label{ac83}
\int_{B_r(x_0)} \rho_0(x)\,dx\le C\, \left(r^{\frac1{\lambda(m-1)}}t^{-\frac{}{m-1}}+ t^{\frac{n}2}\rho^{\frac1{2\lambda}}(x_0,t) \right)
\end{equation}
for every $x_0\in \Rn$, $r>0$, and $t>0$. Therefore,  if $\rho_0$ belongs to the class $\mathcal C$ and if $r>|x_0|+1$,  the left-hand side of the formula is just $1$, so that for
$$
t \ge (C/2)^{m-1}r^{1/\lambda}\,,
$$
it follows from \eqref{ac83}  that $1/2 \le Ct^{n/2}\rho^{1/2\lambda}(t,x_0)$, or
\begin{equation}\label{sup.bl}
\rho(t,x_0)\ge C_1 t^{-n\lambda}
\end{equation}
a size to be compared with \eqref{sup.up} and with the Barenblatt solution. Since the time condition can be written as
\begin{equation}
|x_0|+1 \le C_2 t^{\lambda}\,,
\end{equation}
these estimates show that $\rho(t,\cdot)$ is larger than  a Barenblatt solution of small but comparable mass at the same time, and this holds for all  $t\ge t_1$ with fixed $t_1>0$. Such  estimate  is not precise enough in the constants, but it gives the correct dependence in time, and it is uniform for all the class of data we consider.

\subsubsection{Support estimates from below} This time they will be sharp. We define $R(t)$ as the smallest real  number such that $\rho(t,.)$ is supported in $B_{R(t)}$. By known theory this radius is monotonically increasing in time. Again, by a result of the last author based on symmetrization the support is at least as large in measure as the one of the Barenblatt solution and hence
\begin{equation}\label{bounds}
R(t) \ge R_B \, t^{\lambda},
 \end{equation}
 where $R_B= R_B(m,n)$ is the radius of the unit Barenblatt solution at $t=1$.

\subsubsection{Upper bounds on $R(t)$} They are more difficult and not so accurate. B\'enilan, Crandall and Pierre have introduced in \cite{BCP84} the weighted norm
\begin{equation}\label{def.mu}
 \Vert \mu \Vert_{r,m} = \sup_{R\ge r}  R^{-\frac2{m-1}-n}   \mu(B_R(0))
\end{equation}
for given $r\ge 1$, and the corresponding end time $T(\rho_0)=c_1(n,m) /\Vert \rho_0 \Vert_{r,m}^{m-1}$. Then their estimate (1.7) asserts that for all times $0 < t < T(\rho_0)$ we have the very explicit upper estimate:
\begin{equation*}
t^{n\lambda} \rho(x,t) \le
c_2(n,m)\,  r^{\frac{2}{m-1}} \Vert \rho_0 \Vert_{r,m}^{2\lambda}
\end{equation*}
if  $|x| \le r$. As explained in \cite{CVW87}, Lemma 1.2, the restriction $r\ge 1$ is unimportant since it can be eliminated by rescaling, but in letting $r\to 0$ we have to be careful
with the possible divergence of the quotient in the right-hand side of formula \eqref{def.mu}.
The way to make this term finite is to shift the origin of coordinates to a point $x_0$ away from the support of $\rho_0$, i.e., $|x_0|\ge 1$.
Then we use the shifted norm $ \Vert \mu \Vert_{x_0r,m}$ and get the estimate
\begin{equation}\label{mu.est}
t^{n\lambda} \rho(x,t) \le
c_2(n,m)\,  r^{\frac{2}{m-1}} \Vert \rho_0 \Vert_{x_0,r,m}^{2\lambda}.
\end{equation}
if $0<t<T(\rho_0)=c_1 /\Vert \rho_0 \Vert_{x_0,r,m}^{m-1}$ and $|x-x_0|\le r$, $r>0$.
In particular, letting $r\to 0$ for fixed $t$ we get the result: if $|x_0|>1$ then $\rho(t,x_0)=0$ if
\[  t \le c_3(n,m) (|x_0|-1)^{n(m-1)+2}\,.
\]
In this way we get the upper bound
\begin{equation} \label{bounds.1}
 R(t) \le 1+ (t/c_3)^{\lambda}.
\end{equation}
It is an important feature of this bound that the right-hand side amounts for large $t$  (at least $t>2$) to at most a fixed constant $L$ times the lower bound, which uses the Barenblatt radius.

\subsubsection{Roundness of level sets via moving planes}
Using comparison and moving plane techniques, Caffarelli et
al. \cite{CVW87} have shown that $B_{R(t)-2}$ is contained in the
support and the following monotonicity holds; if
\[ 1 \le \frac{|y|^2-|x|^2}{2|y-x|} \qquad \text{ then } \rho(y,t) \le
\rho(x,t) .\]
In particular, if $r(t)>1$ and $|x_0|>r$ then the level
sets
\[
S= \{x: \rho(t,x) = \rho(t,x_0)\}
\]
are graphs over the sphere, i.\,e., if $\theta(x)$ are spherical angle
coordinates in $\Rn$, then there exists a unique function $h$ such
that
\[  S = \{ x : |x|= h(\theta(x))\}
\]
and we also have : $ \max_\theta h -\min_\theta h \le 2 $. Write $h(x)=h(\theta(x))$.
 Then  for $x,y \in S$
\[   |h(y)|^2-|h(x)|^2 \le 2|x-y| \]
\[  2h(x)(h(y)-h(x)) \le 2 |h(y)-h(x)| + 2 h(x) |x/|x|-y/|y||+ O(|x-y|^2)  \]
\[ |x||\nabla h(x)| \le  \frac{h(x)}{h(x)-2} .
\]
In particular, the level sets become rounder with time, both in the
supremum norm, and in the Lipschitz norm, to finally look like balls
as $t\to\infty$.

\subsubsection{The gradient bound for the pressure.}
The upper and lower bounds for the pressure, plus the monotonicity
imply a uniform gradient bound for the pressure. We follow the proof
by Caffarelli et al. \cite{CVW87}, Theorem 1. Let $t_1$ be a time at
which the solutions of $\mathcal C$ have expanded to cover twice the
unit ball, a time that can be uniformly estimated according to the
preceding paragraphs. Then we have

\begin{lemma} If $\rho\in {\mathcal  C}$ and $t\ge kt_1$,  there is a uniform constant $C$ such that
\begin{equation}
|\nabla \rho^{m-1} (t,x) | \le C \,t^{-(1-\lambda) }\,.
 \end{equation}
The constant $k$ is also uniform, i.e. it depends only on $m$ and $n$.
 \end{lemma}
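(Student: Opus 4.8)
The plan is to reduce the estimate, via the parabolic scaling of the PME, to a \emph{uniform} Lipschitz bound for a rescaled family of solutions at one fixed time, and then to argue separately in the bulk --- away from the free boundary, where the equation is uniformly parabolic --- and in a neighbourhood of the free boundary, following the proof of \cite{CVW87}, Theorem~1. The novelty is that the ingredients that proof needs --- upper and lower bounds for the pressure, a bound for the support, and the roundness of the level sets and of the free boundary --- are, by the previous subsections, available uniformly for \emph{every} $\rho\in\mathcal C$ once $t$ is large, so that no non-degeneracy hypothesis on the initial data is used.

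\emph{Step 1 (scaling reduction).} For $t\ge 1$ set, in the notation of \eqref{k.scale}, $\rho_t(s,x)=t^{n\lambda}\rho(ts,t^\lambda x)$, which again belongs to $\mathcal C$. Since $n\lambda(m-1)=1-2\lambda$, its pressure is $v_t(s,x)\ldef\rho_t(s,x)^{m-1}=t^{\,1-2\lambda}\,\rho^{m-1}(ts,t^\lambda x)$, so that $\nabla_x v_t(1,x)=t^{\,1-\lambda}\,(\nabla\rho^{m-1})(t,t^\lambda x)$. Hence the claimed bound $|\nabla\rho^{m-1}(t,\cdot)|\le C\,t^{-(1-\lambda)}$ is equivalent to $\sup_x|\nabla v_t(1,x)|\le C$, uniformly for $t\ge kt_1$. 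Restricting the estimates of the foregoing subsections to the slice $\{s=1\}$ of $\rho_t$ yields, uniformly in such $t$: $v_t(1,\cdot)\le A^2$ on $\Rn$ by \eqref{sup.up}; $v_t(1,x)\ge c_0>0$ for $|x|\le C_2-t^{-\lambda}$ by \eqref{sup.bl}; $\supp\rho_t(1,\cdot)\subset B_L$ with $L=L(n,m)$ by \eqref{bounds.1}; and, by the moving-planes step, the level sets and in particular the free boundary of $\rho_t(1,\cdot)$ are graphs over spheres with Lipschitz constant bounded in terms of $n,m$, so $\rho_t(1,\cdot)$ is monotone in a cone of space directions of fixed opening near each of its free-boundary points. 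Finally $\Delta v_t(1,\cdot)\ge -C(n,m)$ on the positivity set, by the Aronson--B\'enilan semiconvexity estimate $\Delta\rho^{m-1}(t,\cdot)\ge -C(n,m)/t$.

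\emph{Step 2 (the bulk).} The set $\{x\in B_L:\ v_t(1,x)\ge c_0/2\}$ contains a ball $B_{r_0}(0)$ with $r_0=r_0(n,m)>0$, and there $\rho_t$ solves the divergence-form equation $\partial_s\rho_t=\nabla\cdot(\rho_t\nabla\rho_t^{m-1})$ with coefficients between two positive constants, hence uniformly parabolic. By the De~Giorgi--Nash--Moser theory, exactly as in the proof of Proposition~\ref{regularity} (cf.\ \cite{LUS75}), $\rho_t$ and hence $v_t$ is smooth there, with $|\nabla v_t(1,\cdot)|\le C(n,m)$ on a slightly smaller region.

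\emph{Step 3 (near the free boundary --- the main obstacle).} It remains to bound $|\nabla v_t(1,\cdot)|$ on the outer annulus, where no lower bound on $v_t$ is known, and up to the free boundary. Following \cite{CVW87}, at a free-boundary point $x_\ast$ of $\rho_t(1,\cdot)$ with (nearly radial) outward unit normal $\nu$, one uses finite speed of propagation, the uniform bound $v_t\le A^2 s^{-(1-2\lambda)}$ on $s\in[\tfrac12,1]$, the support bound and the roundness of the front to place above $\rho_t$, on a space--time neighbourhood of $(1,x_\ast)$, a suitably truncated half-space travelling-wave barrier with pressure $V\,\big(V(s-1)-\langle\nu,x-x_\ast\rangle\big)_+$, where $V=V(n,m)$ is a fixed constant; the comparison principle then gives that $\rho_t^{m-1}(1,y)$ is at most $V$ times the distance of $y$ to the free boundary, for $y$ near $x_\ast$. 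Combining this with the semiconvexity of Step~1 and the uniform Lipschitz character of the free boundary, a standard sub-/superharmonic comparison with paraboloid barriers upgrades this linear decay to the full bound $|\nabla v_t(1,\cdot)|\le C(n,m)$ on the whole outer annulus. Together with Step~2 and the scaling of Step~1 this proves the lemma. The principal difficulty is Step~3: the travelling-wave barrier has to be constructed so that it lies above $\rho_t$ on the relevant parabolic boundary and advances at least as fast as the solution's own front, with \emph{all} constants controlled uniformly in the scaling parameter $t$; this is exactly where the previously established uniform estimates on the peak, the support, the front location and the roundness all enter together, replacing the non-degeneracy hypothesis of \cite{CVW87}.
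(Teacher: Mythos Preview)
Your scaling reduction (Step~1) and the bulk estimate (Step~2) are sound, but the overall route is substantially different from the paper's, and Step~3 has a genuine gap. The paper does not argue locally near the free boundary at all. Following \cite{CVW87}, Theorem~1, it sets $v=\rho^{m-1}$ and $v_\varepsilon(t,x)=(1+\varepsilon)^{-1}v\big((1+\varepsilon)t+t_0,(1+\varepsilon)x\big)$, proves $v_\varepsilon(0,\cdot)\le v(t_0,\cdot)$ --- using the radial monotonicity from moving planes for $|x|>1$ and an interior parabolic gradient estimate for $|x|\le1$, with the uniform bounds of the preceding paragraphs supplying uniform constants --- and propagates this by the comparison principle. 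Differentiating in $\varepsilon$ at $\varepsilon=0$ yields $v-t\,v_t-x\cdot\nabla v\ge 0$; combined with the Aronson--B\'enilan inequality $v_t\ge|\nabla v|^2-\frac{n\lambda}{t}v$ this gives a pointwise quadratic inequality for $|\nabla v|$, and the claimed bound follows by inserting $v=O(t^{-(1-2\lambda)})$ and $|x|\le R(t)=O(t^\lambda)$. The argument is global; the free boundary never has to be analysed directly.

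Your Step~3, by contrast, is \emph{not} the method of \cite{CVW87}, and as written it does not close. A travelling-wave barrier placed above $\rho_t$ gives at best the linear growth $v_t(1,y)\le V\,d(y,\Gamma)$; together with $\Delta v_t\ge -C$ this controls normal increments in an integrated sense, but it does not produce a pointwise bound on $|\nabla v_t|$. The Aronson--B\'enilan estimate bounds only the Laplacian from below, not the full Hessian, so ``paraboloid barriers'' cannot suppress tangential oscillation of $\nabla v_t$; and the cone monotonicity from moving planes constrains only the \emph{direction} of $\nabla v_t$ (it lies in a dual cone), not its magnitude. The underlying obstacle is the degeneracy of the equation as $v_t\to 0$: interior parabolic estimates on balls $B_{d(x_0)/2}(x_0)$ deteriorate as $d(x_0)\to 0$. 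This is exactly what the $v_\varepsilon$ scaling comparison sidesteps; without it, or an equivalent replacement, Step~3 is incomplete.
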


\begin{proof}  (i) To see this we take pressure  $v:= \rho^{m-1}$ and define
\[
v_\varepsilon(t,x) = (1+\epsilon)^{-1} v((1+\varepsilon)t+t_0, (1+\varepsilon)x)
\]
with $t_0>t_1$ conveniently chosen.  We show that this new variable satisfies
\[
v_\varepsilon(0,x) \le v_{\varepsilon=0}(0,x)=v(t_0,x)
\]
if we put $t_0=kt_1$ with a constant $k\ge 2$ that is estimated below. This comparison can be done as a variant of the proof of \cite{CVW87}, Theorem 1. We add a sketch of the comparison argument with the novelties that allow to obtain uniform constants. We write
$$
v_\varepsilon(0,x)-v(t_0,x)=-\frac{\varepsilon}{1+\varepsilon}v(t_0,(1+\varepsilon)x) +
v(t_0,(1+\varepsilon)x )- v(t_0,x).
$$
The first term in the r.h.s. is negative so we only have to consider the last one.
Now for $|x|>1$ (i.e., outside of the initial support), the monotonicity condition proved in  \cite{CVW87} implies that $v(t_0,(1+\varepsilon)x )\le  v(t_0,x)$.

We have to examine carefully what happens for $|x|\le 1$. By the estimates of previous paragraphs we  know that in the cylinder $Q_1=(kt_1/2, kt_1)\times B_{R_1}(0)$,  $R_1=C(kt_1)^{\lambda}$ we have
$$
0 <c_1(k)< v(t,x)\le c_2(k)
$$
and the ratio between the maximum and minimum is bounded uniformly. Interior estimates for uniformly parabolic equations with smooth coefficients imply then that $|\nabla v|\le K_1 c_2/R_1$ with uniform $K_1$. In this way we get for $|x|\le 1$
$$
v(t_0,(1+\varepsilon)x )- v(t_0,x)\le |\varepsilon \,x| \,|\nabla v|\le  K_1 \varepsilon \frac{ \|v\|_{L^{\infty}} } {R_1}.
$$
This means that for $k$  not so small $v(t_0,(1+\varepsilon)x )- v(t_0,x)\le \epsilon c \|v(t_0,\cdot)\|_\infty$ with a very small $c$. We conclude that
$ v_\varepsilon(0,x)-v(t_0,x)\le 0$ in $\Rn$.

\medskip

(ii) Once $v_\varepsilon(0,x) \le v_{\varepsilon=0}(0,x)$, by the Maximum Principle this is true for all $t>0$, $v_\varepsilon(t,x) \le v_{\varepsilon=0}(t,x)$. Differentiation with respect to $\varepsilon$ at $\varepsilon=0$ gives
\[
v(t+t_0,x)  - t v_t(t+t_0,x) - x \cdot \nabla v (t+t_0,x) \ge 0
\]
at least in the distribution sense, and in fact almost everywhere by known regularity theory. Since we also have \cite{vazquezPME}
\[ v_t = |\nabla v|^2 + (m-1) v \Delta v \ge |\nabla v|^2 -\frac{n}{n+ 2(m-1)}\frac{v}{t}\,,
 \]
we obtain the a.e. inequality
\[
|\nabla v|^2 \le  \left( \frac{n\lambda}{t}+ \frac1{t-t_0} \right)  v + \frac{|x|}{t-t_0} |\nabla v|.
\]
Compare with formula (2.5) of \cite{CVW87} (page 381) which is not so precise since uniformity was not under study. The gradient bound of the Lemma is an immediate consequence of this formula and the uniform bounds $v=O(t^{-n(m-1)\lambda})$ and $R(t)=O(t^{\lambda})$.
\end{proof}

\subsection{Continuous rescaling and entropy}

Pursuing again the idea of making asymptotic calculations in renormalized settings where the solution does not tend to zero but evolves into some nontrivial profile, and copying from the sizes of the Barenblatt solution we introduce the continuous scaling \cite{vazquezPME}

\begin{equation}\label{cont.scal}
 \widetilde \rho (s,y) = e^{sn}  \rho(e^{s/\lambda}  , \lambda^{-1/2} e^{s} y)
\end{equation}
with inverse
\begin{equation}
 \rho(t,x)=t^{-n\lambda} \widetilde \rho (\lambda \log(t),\lambda^{1/2} xt^{\lambda})\,,
\quad s=\lambda \log(t) \,.
\end{equation}
It is easy to see that $\widetilde \rho$ satisfies the equation
\begin{equation}
\partial_s  \widetilde \rho =\nabla_y\cdot \Big(\widetilde \rho \nabla_y(\widetilde \rho^{m-1})+  y \widetilde \rho\Big)=
\nabla_y\cdot \left(\widetilde \rho \,\nabla_y\Big[\widetilde \rho^{m-1}+\frac12 |y|^2\Big]\right)\,.
\end{equation}\normalcolor
Notice that the Barenblatt solutions transform into the following family of stationary solutions of the right-side
\begin{equation}
\widetilde F^{m-1}(y)=(A^2-\frac12 |y|^2)_+
\end{equation}
As a consequence of our previous analysis we know that $\widetilde \rho$ satisfies a number of estimates:

\begin{enumerate}
\item  It is bounded from above $\widetilde \rho\le A^{\frac2{m-1}}$;
\item  For $s\ge s_1$ the support of $\widetilde \rho(s,  \cdot)$ contains  a ball of radius $C_2$ and is contained in the ball of radius $L C_2$, both centered at $0$;
\item The solution is bounded below in the form
$$
\widetilde \rho(s,y)\ge C_3 \quad \mbox{\rm for}\quad |y|\le C_2/2, \ s\ge s_1;
$$
\item Gradient bound:
\[
|\nabla_y \widetilde \rho^{m-1}  | \le C_4, \qquad \ s\ge s_1.
\]
 All the constants are uniform for the class $\mathcal C$, they depend on $n$ and $m$.
\end{enumerate}

\subsubsection{Properties of the  entropy}  The entropy is defined by
\begin{equation}\label{def.entr}
H(\widetilde \rho ) = \int_{\R^n} \Big(  \frac{\widetilde \rho^m}{m} + \frac{1}{2}  |y|^2
\widetilde \rho\Big) \,dy
 \end{equation}
\normalcolor
and the entropy dissipation by
\begin{equation}\label{def.entrdiss}
I(\widetilde \rho) =\int  | y + \nabla (\widetilde \rho^{m-1})|^2 \widetilde \rho\,dy.
\end{equation}
They  are functions of the new time $s=\lambda \log(t)$.  They satisfy
\[
\frac{d}{ds} H(\widetilde \rho) = -I(\widetilde \rho), \qquad
 \frac{d}{ds} I(\widetilde \rho) \le  -2 I(\widetilde \rho)\,,
 \]
 see Carrillo and Toscani \cite{CaTo00} where the coefficients are a
 bit different without affecting the result. Both $H$ and $I$ are
 decreasing functions, and we also know that $H(0)\le c(n,m)$. It is
 also shown that $H(\widetilde \rho(s))$ is bounded below by the
 entropy of the stationary state $H(\widetilde F)$ that has the same
 mass, here set to 1.  More detailed asymptotic information is
 obtained by integration
$$
H(\widetilde \rho(s))+\int_{s_0}^s I(\widetilde \rho(\tau)))\,d\tau=H(\widetilde \rho(s_0)),
\quad I(\widetilde \rho(s))\le I(\widetilde \rho(s_0))e^{-2(s-s_0)}\,.
$$
It follows that
\[
I(\widetilde \rho(s))  \le     c(n,m)e^{-2s}, \qquad
H(\widetilde \rho(s))- H(F) \le  c(n,m)e^{-2s}\,,
\]
since we already know that $\widetilde \rho(s)$ converges to the Barenblatt profile $F=\widetilde{ \mathcal B}$ of mass 1.

\subsubsection{Application to our problem}   We take $0<\ve  \le \ve_{n,m}$ and work for any large time in the convex round set
\[
U_\ve (s) = \{ x : \widetilde \rho^{m-1}(s,y) > \ve  \}\,,
\]
which is uniformly bounded. By the definition of the entropy dissipation and our decay inequality for it  we have
\[
\int |\nabla \widetilde \rho^{m-1}+ y|^2 dx \le c\ve ^{-1} e^{-2s}.
\]
By the Poincar\'e's inequality there exists a constant $c_0(s,\ve )$ such that
\begin{equation}\label{poincare}
\Vert \widetilde \rho^{m-1}+\frac{1}{2}|y|^2-c_0(s,\ve)) \Vert_{H^1(U_\ve )}
\le c\ve ^{-1/2} e^{-s}.
 \end{equation}
We interpolate with the Lipschitz bound $|\nabla \widetilde \rho^{m-1}|\le C_3$ to get
\[
\Vert \widetilde\rho^{m-1} -(c_0^2-\frac{1}{2}|y|^2) \Vert_{L^\infty(U_\ve )} \le c \ve ^{-\frac1n} e^{-\frac{2}ns},
\]
if $ n >2$. Thus,  on the set $\{u>\ve \}$ we have a very precise estimate
\begin{equation}\label{ineq.exp}
 c_0^2 - c\ve ^{-\frac1n}e^{-\frac{2}{n}s} -   \frac{1}{2}  |y|^2   \le \widetilde \rho^{m-1}(s,y) \le c_0 + c\ve ^{-\frac1n} e^{-\frac{2}ns} -\frac{1}{2}|y|^2.
\end{equation}
This means that for $t$ large $\widetilde\rho$ looks like a Barenblatt profile on the set $U_\varepsilon= \{\widetilde\rho>\ve \}$, that in turn must look like a ball $B_{R_0(t)}$. The calculations are a bit different in lower dimensions, we get
\[
\Vert \widetilde\rho^{m-1} -(c_0^2-\frac{1}{2}|y|^2) \Vert_{L^\infty(U_\ve )} \le c \ve ^{-\frac12} e^{-s} \]
 if $n=1$, while for $n=2$ the expression is
\[ \Vert \widetilde\rho^{m-1} -(c_0-\frac{1}{2}|y|^2) \Vert_{L^\infty(U_\ve )} \le c \ve ^{-\frac12} e^{-s}(s+|\ln \ve| )\,.
\]

\subsubsection{Improved upper bounds on $c_0$.} We already know that at the maximum value $\widetilde \rho$ must be  bounded above by the Barenblatt profile $\widetilde F$, hence from \eqref{ineq.exp}  we have the upper bound (for $n\ge 3$)
$$
c_0^2- c\ve ^{-\frac1n} e^{-\frac{2}ns}\le A^{2}.
$$
This allows to improve the upper bound for the solution to the form
\begin{equation}\label{uppwe.vebound}
\widetilde \rho^{m-1}(s,y) \le \max \Big\{A^2 + 2c\ve ^{-\frac1n} e^{-\frac{2}ns} -\frac{1}{2}|y|^2\,, \ve^{m-1}\Big\}\,,
\end{equation}
The upper bound immediately implies that for $s$ large $U_\varepsilon \subset B_r(0)$ with
\begin{equation*}
 r = \sqrt{2}c_0 + c \varepsilon^{-\frac1n} e^{-\frac2n s}
\end{equation*}
if $n > 2$, and
\begin{equation*} 
r= \sqrt{2}c_0 + c \varepsilon^{-\frac12} e^{-s}
\end{equation*}
if $n=1$ with a logarithmic correction if $n=2$. Recall now the upper approximation for $c_0$
and we get an approximation of  $U_\varepsilon$ to the Barenblatt radius.

We point out a difficulty in taking the limit in the upper bounds
of the sets $U_\ve$ as $\ve\to 0$, since the support may have a thin
tail where $\rho$ is smaller than the error that we have
calculated. We already have a uniform upper bound for the support in a
possibly larger ball $B_{R_1}(0)$. We still have to prove below that
such an external region $B_{R_1}\setminus U_\ve (s)$ (the `tail') is
small, but in any case we know that $\widetilde\rho\le \ve $ there.

\subsubsection{An upper bound of the radius} We turn to an upper estimate of the support, for which we have to bound the possible 'tail' where $\tilde \rho $ is small. We do this by a
comparison argument with a Barenblatt solution outside the Barenblatt
radius for the Barenblatt solution with same center and mass.
We recall  that (see \eqref{bounds.1} )
\begin{equation} \label{firstupperbound}
 \supp \rho(t,.) \subset B_{c t^{\lambda}} (0) \end{equation}
for $t\ge t_0$, $t_0$  and $c$ depending only on $n$ and $m$. Then
\begin{equation}    \rho(t,x) \le \varepsilon t^{-n\lambda}  \end{equation}
provided
\begin{equation}   |x| \ge r(t)t^{\lambda}   :=  R_B t^\lambda (1+ c\varepsilon^{-\frac12-(\frac{m-2}{m-1})_+}t^{-\lambda}    + c\varepsilon^{m-1} ) \end{equation}
if $n \le 2$ (again with logarithmic correction if $n=2$ and $m \le 2$)
resp.
\begin{equation}     |x|\ge r(t)t^\lambda := R_B t^\lambda (1+ C(
\varepsilon^{-\frac12- (\frac{m-2}{m-1})_+}t^{-\lambda}  + \varepsilon^{-\frac1n} t^{-\lambda/n}
  + \varepsilon^{m-1})      \end{equation}
for $ n>2$. This information suffices to construct a comparison solution
by rescaling and time translating a Barenblatt solution.
We fix  $\tilde t\ge t_0$ and define
\[   \rho_B(t,x) =  (t+t_1)^{-n\lambda}   \Big(\frac{A^2}4 - \frac{|x|^2}{2(t+t_1)^{2\lambda}}\Big)^{\frac1{m-1}}_+   \]
which satisfies (with $c$ from \eqref{firstupperbound})
\[ \rho_B(\tilde t,x) \ge  \varepsilon \tilde t^{-n\lambda}  \qquad \text{ for } |x| \le c {\tilde t}^{\lambda}\]
if
\[ \Big(\frac{\tilde t}{\tilde t+t_1}\Big)^{n(m-1)\lambda} \Big( \frac{A^2}4- c^2 (\frac{\tilde t}{\tilde t+t_1})^{2\lambda} \Big)
 \ge \varepsilon^{m-1}
\]
which holds if
\[     \tilde t \ll   t_1  \ll       \tilde t \varepsilon^{-\frac1{n\lambda}}.  \]
Moreover
\[ \rho_B(t,x) \ge \varepsilon  \qquad \text{ for }  \tilde t \le t \le  T,  |x|= r(t) \]
if
\[  \varepsilon^{m-1}  \le \frac1{(T+t_1)^{(m-1)n\lambda}} \Big(\frac{A^2}4  - r^2(T) \frac{ T^{2\lambda}}{(t_1+ T)^{2\lambda}} \Big)    \]
which holds if
\[ T = \Big[\frac{(\frac{A^2}{4})^{\frac1{2\lambda}} }{1-(\frac{A^2}4)^{\frac1{2\lambda}} } - \varepsilon^{m-1}
+ \varepsilon^{-\frac12 - (\frac{m-2}{m-1})_+} t^{-\lambda} + \varepsilon^{-\frac1n}   t^{-\frac{2\lambda}n} \Big] t_1  \]
with obvious modifications if $n=1$ or $n=2$.

But then $\rho_B(T,.)$ is supported in $\overline{B_R(0)}$  with
\[ R =    D (T+\delta t \varepsilon^{-\frac2n})^\lambda \le A (T^{\lambda}+ \epsilon^{m-1} + \varepsilon^{-\frac12 -(\frac{m-2}{m-1})_+} t^{-\lambda}+ \varepsilon^{-\frac1n}  t^{-\frac{2}{n} \lambda}  ) \]
Now we optimize $\varepsilon$ and arrive at
\[
\supp \rho(T,.) \subset \supp \rho_B(T,.) \subset B_{R(T)+ cT^{-\beta}}.
 \]
for some positive constant $\beta$ depending on $n$ and $m$ resp.
\begin{equation}
\supp \tilde \rho(s) \subset B_{1+ e^{-\beta s/\lambda}}(0).
\end{equation}

\subsubsection{Lower bounds}

With this information we derive a lower bound on $c_0$ by
\[ \int (A^2-y^2)_+^{\frac1{m-1}} dy =   \int \widetilde \rho(s,y) dy \le
\int (c_0-|y|^2)_+^{\frac{1}{m-1}} dy + c\epsilon^{-\frac12-(\frac{m-2}{m-1})_+} e^{-s} + c\epsilon  e^{-\beta s/\lambda}   \]
and hence
\[   (A^2- c e^{-\beta s/\lambda}  - \frac12 |y|^2 )_+ \le \tilde \rho^{m-1} \le (A^2+  c e^{-\beta s/\lambda}  -\frac12 |y|^2)_+ \]
for some positive $c$ and $\beta$. We obtain

\begin{proposition} Let $\rho_0$ belong to the class $\mathcal C$. There exists \ $t_0(n,m)>0$ \ such that for $t \ge t_0M^{m-1}R^{\frac1\lambda}$ the solution $\rho(t,x)$ is sandwiched between
\begin{equation}\label{form.t}
 t^{-n\lambda} \Big(A^2-c_1t^{-\lambda\beta}  -  \frac{\lambda|x|^2}{2t^{2\lambda}}  \Big)^{\frac1{m-1}}_+       \le \rho(t,x)  \le  t^{-n\lambda} \Big(A^2+c_1t^{-\lambda\beta } - \frac{\lambda|x|^2}{2t^{2\lambda}}   \Big)^{\frac1{m-1}}
\end{equation}
where $A$ is given in \eqref{relmass}.  Moreover, the free boundary of $\rho(t,x)$ is contained in an annulus with radii $R_{\pm}(t)$ such that
\[ R_B t^{n\lambda} \le R_-(t)\le  R_+(t)\le R_B t^{n\lambda}(1+c_3t^{-\beta}).\]
The constants $t_0, A, c_1$ and $c_3$ are uniform for the class $\mathcal C$.
\end{proposition}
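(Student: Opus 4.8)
\medskip

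\noindent\textbf{Plan of proof.} The analytic substance has been assembled in the preceding subsections; my plan is to collect those estimates, make the choice of the auxiliary parameter $\varepsilon$ explicit, promote the pointwise bound from the sublevel set $U_\varepsilon$ to all of $\R^n$, and then undo the continuous rescaling \eqref{cont.scal}. It suffices to argue for the normalized class $\mathcal C$, where $M=R=1$, so that $A=a$ and the threshold reads $t\ge t_0$; for general data the scaling reduction $\widetilde\rho(t,x)=(R^{n}/M)\,\rho(R^{1/\lambda}M^{1-m}t,Rx)$ then carries the conclusion over with the threshold $t\ge t_0M^{1-m}R^{1/\lambda}$ of \eqref{lowerboundontime}. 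Every ingredient I use below — the sharp $L^1$--$L^\infty$ bound \eqref{sup.up}, the Aronson--Caffarelli inequality \eqref{ac83}, the moving-planes monotonicity and the roundness of level sets recalled above, the uniform pressure-gradient bound, the entropy decay $I(\widetilde\rho(s))\le c(n,m)e^{-2s}$ and $H(\widetilde\rho(s))-H(\widetilde F)\le c(n,m)e^{-2s}$, and the ensuing estimate \eqref{ineq.exp} on $U_\varepsilon(s)=\{\widetilde\rho^{m-1}(s,\cdot)>\varepsilon\}$ — carries a constant depending only on $n$ and $m$ over $\mathcal C$, so the resulting $t_0,c_1,c_3$ will be uniform.

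I would first pin the Poincar\'e constant $c_0(s,\varepsilon)$ of \eqref{poincare}--\eqref{ineq.exp} to the Barenblatt value $A^2$. From $\widetilde\rho\le A^{2/(m-1)}$ and \eqref{ineq.exp} one obtains $c_0\le A^2+c\,\mathrm{Err}(s,\varepsilon)$, while conservation of mass together with the tail inclusion $\supp\widetilde\rho(s)\subset B_{\sqrt2\,A+e^{-\beta s/\lambda}}(0)$ supplied by the shifted-Barenblatt comparison carried out above gives $c_0\ge A^2-c\,\mathrm{Err}(s,\varepsilon)$, where $\mathrm{Err}(s,\varepsilon)=\varepsilon^{-\frac12-(\frac{m-2}{m-1})_+}e^{-s}+\varepsilon^{-1/n}e^{-2s/n}+\varepsilon^{m-1}e^{-\beta s/\lambda}$, with the customary logarithmic corrections when $n=1,2$. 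Choosing $\varepsilon=\varepsilon(s)$ to be a suitable small negative power of $e^{s}$ balances the three terms and yields $|c_0-A^2|\le c\,e^{-\beta' s/\lambda}$ with a possibly smaller $\beta'>0$; renaming $\beta'$ as $\beta$ and substituting back into \eqref{ineq.exp},
\[
\big(A^2-c\,e^{-\beta s/\lambda}-\tfrac12|y|^2\big)_+\ \le\ \widetilde\rho^{m-1}(s,y)\ \le\ \big(A^2+c\,e^{-\beta s/\lambda}-\tfrac12|y|^2\big)_+\qquad\text{on }U_{\varepsilon(s)}(s),\ \ s\ge s_1.
\]

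Next I would extend this to all of $\R^n$ and read off the support. Where $y\notin U_{\varepsilon(s)}(s)$ one only knows $\widetilde\rho^{m-1}(s,y)\le\varepsilon(s)$; but $\varepsilon(s)$ lies below the band $c\,e^{-\beta s/\lambda}$, and by the roundness of level sets recalled above, together with the pointwise control just obtained on $U_{\varepsilon(s)}$, the domain $U_{\varepsilon(s)}(s)$ is nearly a ball whose boundary lies within $c\,e^{-\beta s/\lambda}$ of $\{|y|=\sqrt2\,A\}$, so on the thin annulus $\supp\widetilde\rho(s)\setminus U_{\varepsilon(s)}(s)$ one has $A^2-\tfrac12|y|^2\le c\,e^{-\beta s/\lambda}$. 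Hence there the lower bound is trivially valid, its positive part being zero, and the upper bound holds because $\widetilde\rho^{m-1}\le\varepsilon(s)\le c\,e^{-\beta s/\lambda}$, after enlarging the right-hand constant if necessary. Thus the displayed sandwich holds for every $y\in\R^n$, and in particular $\supp\widetilde\rho(s)$ lies between $B_{\sqrt2\,A-c\,e^{-\beta s/\lambda}}(0)$ and $B_{\sqrt2\,A+c\,e^{-\beta s/\lambda}}(0)$, the inner inclusion being in any case guaranteed by the symmetrization bound \eqref{bounds}.

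It then remains to undo the scaling. With $s=\lambda\log t$, $y=\sqrt\lambda\,x\,t^{-\lambda}$ and $\widetilde\rho(s,y)=t^{n\lambda}\rho(t,x)$ one has $e^{-\beta s/\lambda}=t^{-\beta}$ and $\tfrac12|y|^2=\lambda|x|^2/(2t^{2\lambda})$, so raising the sandwich for $\widetilde\rho^{m-1}$ to the power $1/(m-1)$ and renaming $\beta$ as $\lambda\beta$ produces exactly \eqref{form.t} (with $A=a$), while the support inclusions become $R_B\,t^{\lambda}\le R_-(t)\le R_+(t)\le R_B\,t^{\lambda}(1+c_3\,t^{-\beta})$, using $R_B=a\sqrt{2/\lambda}$; the constraint $s\ge s_1$ becomes $t\ge t_0$, and the constants are uniform over $\mathcal C$ by the first paragraph. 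The step I expect to be the main obstacle is the promotion to all of $\R^n$: the Barenblatt-type pointwise estimate is only available where the density is not too small, and excluding a long, thin, low-density filament of the support is precisely what the shifted-Barenblatt comparison achieves; making it quantitative — coupling the time-shift of the comparison Barenblatt with the choice of $\varepsilon(s)$ so that the tail thickness genuinely decays like $e^{-\beta s/\lambda}$ for a positive $\beta$ — is where the real effort lies.
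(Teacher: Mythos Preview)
Your proposal is correct and follows essentially the same route as the paper: entropy dissipation decay plus Poincar\'e and Lipschitz interpolation on $U_\varepsilon$, pinning $c_0$ to $A^2$ via the sharp sup bound (above) and mass conservation with the shifted-Barenblatt tail control (below), optimizing $\varepsilon=\varepsilon(s)$, then undoing the continuous rescaling. You have also correctly identified the shifted-Barenblatt comparison for the support upper bound as the step that carries the real work, exactly as in the paper.
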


The proof that we have done assumes $M=1$ and $R=1$, and uses rescaled
variables, the usual variables are restored via formula
\eqref{cont.scal}. For general $u_0\in \mathcal C$ with $M,R>0$ use
the scaling transformation \eqref{scaling.form} with $L=1/R$,
$A=1/(MR^n)$, $x_0=0$, $t_0=0$, hence $1/C=M^{m-1}R^{2+n(m-1)}$, which
is the time scale factor.

\subsection{Flatness for large time}
We proceed with the proof of Theorem \ref{thm2}. We start from a time
large enough so that \eqref{form.t} applies near the time $t=t_1$
after rescaling.  An elementary geometric estimate shows that, as a
consequence of \eqref{form.t}, there exists $c$ depending only on $n$
and $m$ so that if $t_1\ge T\ge 2t_0$, $(t_1,x_1) \in \partial
\mathcal{P}(\rho)$ and $\mathbf{a} = x_1/|x_1|$ then
\[
\begin{split}
\rho_{tw}\Big(t,x; \mathbf{a},1, - cT^{-\frac{\beta}2}    \Big)   \le &
T^{\frac{\beta}{2(m-1)}} t_1^{-\frac{1}{m-1}}  \rho (t_1+  t_1^{2\lambda-1} T^{-\frac{\beta}2}    t   ,  x_1 +t_1^\lambda T^{-\frac{\beta}2} x) \\ &   \le \rho_{tw}(t, x; \mathbf{a},1, cT^{-\frac{\beta}2})
\end{split}
 \]
 for $ |x| \le 1 $ and $-1\le T \le 0$. Hence, if $ \delta:= c
 T^{-\frac{\beta}2}\le \delta_0$ with the constants of Theorem
 \ref{th-main} we apply that Theorem to
\[ \overline \rho(t,x) = T^{\frac{\beta}{2(m-1)}} t_1^{-\frac{1}{m-1}}
\rho (t_1+ t_1^{2\lambda-1} T^{-\frac{\beta}2} t , x_1 +t_1^\lambda
T^{-\frac{\beta}2} x) \]
and conclude that at near the new time $t=0$
the solution is $C^\infty$ regular in $x$ and $t$ up to the free
boundary which is $C^\infty$ hypersurface very close to the rescaled
Barenblatt free boundary. This gives the proof of the regularity part
of Theorem \ref{thm2}.

\subsection{Changing dependent and independent variables for large time}

 To complete the proof we have to obtain the rate of convergence.
We rewrite the equation with self-similar coordinates by parametrizing the graph of the pressure $\tilde \rho^{m-1}(s,y)$ as
\[ \Phi: B_{\sqrt{2}}(0) \times (0,\infty) \to (xu, u^2(1-|x|^2/2) ).
\]
Then $u=const$ is the Barenblatt solution and $y$ and $v= \tilde \rho^{m-1}$
are related to $x$ and  $u$  by
\[    y = xu \qquad v = u^2(1-|x|^2/2) \]
  and a tedious calculation gives
 \begin{equation}\label{global}   u_s -(m-1) \sum_i \partial_i\Big[ \Big(1-\frac{|y|^2}2\big)F^i \Big] -(m-2)
y_i F^i =0
\quad \text{ on } \
[T,\infty) \times B_{\sqrt{2}}(0),
 \end{equation}
where
\[ F^i = \partial_{x_i} u - \frac{x_i |\nabla u|^2}{u+x_k\partial_k u }. \]
It is not hard to see that \eqref{precisesupport} is a consequence of \eqref{precise} which in these coordinates becomes
\begin{equation}\label{precisetrans}    |u-\sqrt{A_{m,n}}| \le c e^{-2s}. \end{equation}  It remains to prove \eqref{precisetrans}.

The linearization at $u=1$ is
\begin{equation}\label{linearized} \dot u_s - (m-1) \sum_{i}\partial_i  \Big[\Big(1-\frac12|x|^2\Big) \partial_i \dot u\Big]  - (m-2) x_i \partial_i \dot u= 0\qquad \text{ on }
[0,\infty) \times B_{\sqrt{2}}(0).  \end{equation}

The mass is given by
\[
\begin{split}
 M= &\int \tilde \rho dx \\
=& \int u^{\frac{2}{m-1}}(x)\Big(1-\frac12|x|^2\Big)^{\frac1{m-1}} \det ( u \delta_{ij}+ \partial_i u x_j )_{1\le i,j\le n}
\\ = & \int u^{n+\frac2{m-1}-1} (u + x_k \partial_k u) \Big(1-\frac12|x|^2\Big)^{\frac1{m-1}} dx
\\ = & \int \Big[u^{n+\frac2{m-1}}+ \frac1{n+\frac2{m-1}} x_k \partial_k u^{n+
\frac2{m-1}} \Big]     \Big(1-\frac12|x|^2\Big)^{\frac1{m-1}}
\\ = & 2\lambda \int_{B_{\sqrt2}(0)}  u^{n+\frac2{m-1}}    \Big(1-\frac12 |x|^2\Big)^{\frac{2-m}{m-1}} dx
\end{split}
\]
and the entropy by
\[
\begin{split}
H = &  \int \tilde \rho \big( \frac{\tilde \rho^{m-1}}m + \frac12 |y|^2 \big) dy
\\ = & \frac1m \int_{B_{\sqrt{2}}(0)} u^{n+1+\frac2{m-1}}(u+x_k \partial_k u ) (1+(m-1)|x|^2/2)        \Big( 1- \frac12 |x|^2\Big)^{\frac1{m-1}} dx
\\ = & \frac2{(n+2)(m-1)+2}  \int_{B_{\sqrt2}(0)}  u^{n+2+\frac2{m-1}}   \Big(1-\frac12|x|^2\Big)^{\frac{2-m}{m-1}} dx.
 \end{split}
\]

\subsection{Relating entropy dissipation and an energy for $u$}

\begin{lemma} Suppose that $\lambda s \ge \ln T(n,m)$. Then $u$ is well defined
and the following estimates hold
\begin{equation}\label{pointwise}    |\sqrt{A_{n,m}} - u| \le C e^{-\beta \lambda s},
\end{equation}
\begin{equation}\label{derivative}
 |\nabla u | \le C e^{-\beta\lambda s}.
 \end{equation}
Moreover,
\begin{equation}\label{decay}   \int \Big(1-\frac{|y|^2}2\Big)^{\frac1{m-1}}|  \nabla u(s,.)  |^2 \le c  \int \tilde \rho \Big|y+\nabla \tilde \rho^{m-1}\Big|^2 dy\le c e^{-2s}\,.
\end{equation}
\end{lemma}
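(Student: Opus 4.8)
The plan is to read off the three estimates from facts already assembled: the quantitative sandwich \eqref{form.t} for $\rho$, the smoothness up to the free boundary granted by Theorem~\ref{th-main} in the flat regime (which, as in the preceding subsection, is available precisely for $\lambda s\ge\ln T(n,m)$), and the entropy--dissipation decay $I(\widetilde\rho(s))\le c(n,m)e^{-2s}$. First I would settle well-definedness of $u$: for $\lambda s\ge\ln T(n,m)$, rescaling \eqref{form.t} around a free--boundary point exactly as in the subsection ``Flatness for large time'' and invoking Theorem~\ref{th-main} shows that $\widetilde\rho^{m-1}(s,\cdot)$ is $C^\infty$ in its positivity set and up to the free boundary with $\nabla_y\widetilde\rho^{m-1}\neq0$ there, while the moving--planes analysis makes each level set a round graph over the sphere. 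Hence $\Phi\colon x\mapsto(xu,\,u^2(1-\tfrac12|x|^2))$ is a diffeomorphism of $B_{\sqrt2}(0)$ onto the graph of the pressure, so $u=u(s,x)$ is well defined and $C^\infty$ up to $\partial B_{\sqrt2}(0)$, solves \eqref{global}, and has strictly positive Jacobian factor $u^{n-1}(u+x\cdot\nabla_x u)$. Along the way I record the identity valid on the graph, at $y=xu(x)$,
\[
\widetilde\rho^{m-1}(s,y)+\tfrac12|y|^2 = u^2\bigl(1-\tfrac12|x|^2\bigr)+\tfrac12|x|^2u^2 = u^2 .
\]

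For \eqref{pointwise} I would rewrite \eqref{form.t} through the continuous scaling \eqref{cont.scal} as $(A^2-ce^{-\beta\lambda s}-\tfrac12|y|^2)_+\le\widetilde\rho^{m-1}(s,y)\le(A^2+ce^{-\beta\lambda s}-\tfrac12|y|^2)_+$, evaluate at $y=xu(x)$, add $\tfrac12|y|^2$, and use the identity above to obtain $|u^2-A^2|\le ce^{-\beta\lambda s}$; dividing by $u+A$, bounded below by the lower Barenblatt bound, gives \eqref{pointwise} since $A=\sqrt{A_{n,m}}$. For \eqref{derivative}, note that by \eqref{pointwise} the function $u-A$ is $Ce^{-\beta\lambda s}$--small and solves \eqref{global}, which near the stationary solution $u\equiv A$ is of the same degenerate parabolic model type as the von--Mises--transformed porous medium equation of Section~\ref{sec.vonmises}. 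I would then apply the interior and boundary regularity estimates of Proposition~\ref{regularity} to this equation --- equivalently, transport the derivative estimates of Theorem~\ref{th-main} for $\widetilde\rho^{m-1}$ through the smooth map $\Phi$ --- so that the derivatives of $u$ up to $\partial B_{\sqrt2}(0)$ are controlled by the sup--norm deviation from the stationary profile, which yields $|\nabla_x u(s,\cdot)|\le C\|u-A\|_{L^\infty}\le Ce^{-\beta\lambda s}$.

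Finally, in \eqref{decay} the right--hand inequality is the already established entropy--dissipation decay $I(\widetilde\rho(s))=\int\widetilde\rho\,|y+\nabla_y\widetilde\rho^{m-1}|^2\,dy\le c(n,m)e^{-2s}$, see \eqref{def.entrdiss}. For the left--hand inequality I would change variables $y=xu(x)$: by the identity of the first step $y+\nabla_y\widetilde\rho^{m-1}=\nabla_y(u^2)=2u\,(D_x y)^{-T}\nabla_x u$ with $D_x y=u\,\mathrm{Id}+x\otimes\nabla_x u$, while $dy=u^{n-1}(u+x\cdot\nabla_x u)\,dx$ and $\widetilde\rho=u^{2/(m-1)}(1-\tfrac12|x|^2)^{1/(m-1)}$, so that
\[
\int\widetilde\rho\,\bigl|y+\nabla_y\widetilde\rho^{m-1}\bigr|^2\,dy
= 4\int u^{n+1+\frac{2}{m-1}}\,(u+x\cdot\nabla_x u)\,\bigl(1-\tfrac12|x|^2\bigr)^{\frac1{m-1}}\,\bigl|(D_x y)^{-T}\nabla_x u\bigr|^2\,dx .
\]
By \eqref{pointwise} and \eqref{derivative}, for $s$ past the threshold $u$ and $u+x\cdot\nabla_x u$ lie in $[\tfrac12A,2A]$ and $(D_x y)^{-T}$ is within $O(e^{-\beta\lambda s})$ of $u^{-1}\,\mathrm{Id}$, hence $|(D_x y)^{-T}\nabla_x u|^2\ge c|\nabla_x u|^2$ and the remaining power of $u$ is bounded below; this produces the left inequality.

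The genuine work sits in the first and third steps: making $u$ well defined and $C^\infty$ up to $\partial B_{\sqrt2}(0)$ with the accompanying derivative bound draws on the full strength of Theorem~\ref{th-main} and Proposition~\ref{regularity} for the degenerate operator in \eqref{global}, and on the moving--planes roundness to keep the Jacobian $u+x\cdot\nabla_x u$ away from $0$ up to the free boundary. Once that regularity is in place the rest is short: \eqref{pointwise} is algebra on top of \eqref{form.t}, and \eqref{decay} is the above change of variables resting on the sharp entropy--dissipation decay together with the crude bounds from \eqref{pointwise}--\eqref{derivative}.
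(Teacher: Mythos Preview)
Your proof is correct and follows essentially the same route as the paper. Both derive \eqref{pointwise} directly from the sandwich \eqref{form.t} via the identity $\widetilde\rho^{m-1}+\tfrac12|y|^2=u^2$, and both obtain \eqref{decay} from the geometric equivalence $|\nabla u|\sim|y+\nabla_y\widetilde\rho^{m-1}|$ together with the entropy--dissipation decay $I(\widetilde\rho(s))\le ce^{-2s}$; you simply make the change of variables explicit where the paper leaves it implicit.

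The only genuine difference is in how you reach \eqref{derivative}. The paper splits into two regions: in a $\sqrt{\delta_0}$--neighborhood of the free boundary it invokes Theorem~\ref{th-main} directly, while in the interior it combines the entropy--dissipation bound with Moser's $L^\infty$ iteration to pass from the weighted $L^2$ control on $y+\nabla\widetilde\rho^{m-1}$ to a pointwise bound. You instead use \eqref{pointwise} together with the regularity theory (Theorem~\ref{th-main} near the boundary, standard uniformly parabolic estimates in the interior, or equivalently pulling back the estimates on $\widetilde\rho^{m-1}$ through $\Phi$) to conclude $|\nabla u|\le C\|u-A\|_{L^\infty}\le Ce^{-\beta\lambda s}$. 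Both arguments are valid; yours is arguably more self--contained since it avoids spelling out what equation Moser's iteration is being applied to in the interior, at the cost of invoking the full regularity machinery once more.
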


\begin{proof}
The pointwise bound is equivalent to  \eqref{form.t}. Elementary
geometric considerations show that
\[
|\nabla \tilde \rho^{m-1} - y| \sim |\nabla u |
\]
 if $|\nabla v -y |
\le \frac12 $ or $|\nabla u| \le \frac12$. In an $\sqrt{\delta_0}$
neighborhood of the free boundary this estimate is a consequence of
the regularity estimates of Theorem \ref{th-main}. In the interior it
follows from the bound on entropy dissipation rate
\eqref{def.entrdiss} and Moser's $L^\infty$ bounds. We obtain
\eqref{decay} from the decay of the entropy dissipation.
\end{proof}

Since $u$ can be written as a solution to
\begin{equation}\label{linearized2} u_t - (m-1)
  \Big(1-\frac{|x|^2}2\Big)^{\frac{2-m}{m-1}} \sum_{i}\partial_i
  \Big[\Big(1-\frac{|y|^2}2\Big)^{\frac{1}{m-1}} a^{ij} \partial_j u
  \Big] = 0\qquad \text{ on }
[0,\infty) \times B_{\sqrt{2}}(0)  \end{equation}
we obtain for $s\ge s_0+1$ the existence of a constant $\kappa$ so that
\begin{equation} \label{dist} \Vert u(s)- \kappa \Vert^2_{sup} \le c
  \Vert u(s-1) - \kappa \Vert^2_{L^2} \le c \int (1-|x|^2/2) |\nabla
  u(s-1)|^2 dx \le c e^{-2s}
\end{equation}
where we used Moser's estimate for the first and Poincar\'e's inequality for the second inequality. Clearly  $\kappa = \sqrt{A_{nm}}$.  Arguing similarly
in the interior and at the boundary we arrive at
\begin{equation}  \Vert \partial^\alpha_x \partial_s^k u(s,x) \Vert_{sup} \le c  e^{-s} . \end{equation}

\subsection{The spectrum of the linearization \eqref{linearized}}

The spectrum of the elliptic operator in \eqref{linearized} determines
the convergence rate. We summarize the relevant results of Seis,
Proposition 6 in \cite{Seis2}.  The operator can be written as
\[ u \to Lu = -(m-1)\Big( 1- \frac{|x|^2}2\Big)^{\frac{2-m}{m-1}} \nabla\Big[ \Big( 1-
\frac{|x|^2}2\Big)^{\frac1{m-1}}\nabla u\Big] = 0. \]
Let $H$ be the Hilbert
space $L^2(B_{\sqrt{2}}(0), (1-|x|^2/2)^{\frac{2-m}{m-1}} )$.  Then
$L$ is the positive semi-definite operator on $H$ defined by the
quadratic form
\[
E(u)= (m-1) \int \Big(1-\frac{|x|^2}2\Big)^{\frac{1}{m-1}} |\nabla u|^2 dx.
\]
The Hilbert space $H^1$ defined by
\[
\Vert u \Vert^2_{H^1}= E(u) + \Vert u \Vert_{H}^2
\]
embeds compactly into $H$ and hence the spectrum of $L$ consists of a
sequence of eigenvalues tending to $\infty$. Both norms are invariant
under rotations and we can diagonalize it into spherical harmonics of
degree $l$. For each degree we obtain a sequence of simple eigenvalues
tending to infinity:
\[ \lambda_{lk} = (l+2k) + (m-1) k(2k+2l+n-2) \]
where $l$ and $k$ are
nonnegative integers. The first eigenvalues and eigenfunctions are:

\begin{enumerate}
\item $\lambda_{00} = 0$, the eigenspace  is spanned by $u=1$.

\smallskip

\item $\lambda_{10} = 1$, the eigenspace is  spanned by $u=x_i$.

\smallskip

\item $\lambda_{20} = 2$, $ u =  x^t A x$ for a traceless matrix $A$.

\smallskip

\item $\lambda_{l0} = l$, $ u$ is a harmonic polynomial of degree $l$.

\smallskip

\item $\lambda_{01} = \lambda^{-1}$, $u= |x|^2- 2n(m-1)\lambda$ with a crossover at $m= 1+\frac1n$ and a second at $m=1$. This mode corresponds to a time shift
in the original variables.

\smallskip

\item $\lambda_{02} = 2\lambda^{-1} + 4 (m-1)$ with radial eigenfunction
\[
u(x)=  |x|^4+ \frac{4(m-1)(n+2)}{2-\lambda_{02}}\Big(|x|^2 + \frac{4(m-1)n}{\lambda_{02} }\Big).  \]
\end{enumerate}

This is the first relevant eigenvalue in the radial case (resp. the case with vanishing harmonic moments).

\subsection{Consequences for stability}

We now discuss their relevance for the nonlinear dynamics.

\smallskip

\noindent (1) $ \dot u = 0 $ with eigenvalue $0$ is the least stable
mode. It corresponds to changes of mass. We eliminate this mode by
fixing the mass of the solution.

\smallskip

\noindent (2) A mismatch in the center of mass corresponds to the modes
  \[
   \dot u = y_i
   \]
with eigenvalue $-1$. This implies the convergence rate
$t^{-\lambda}$ in self-similar coordinates.  Note that the integral
$ \int \rho(x) x_i dx $ is conserved by the PME flow. In
particular, if the center of mass is $0$ initially then this
remains so for all time. This can be done by a mere displacement of
the axes.  Equation \eqref{global} can now be written as a
perturbation of the linear equation    \eqref{linearized2}.

\smallskip

\noindent (3) The next eigenvalue is $-2$ and hence
\[
u =  A_{n,m} + \sum_{i=1}^n a_i y_i e^{-s} + O(e^{-(2-\varepsilon)s} )
\]
in $L^2$. Since
\[
0 = \int y_i \tilde \rho(s,y) dy = \int a_i y_i^2 e^{-t} + O(e^{-(2-\varepsilon)s})\,,
\]
we conclude that $a_i=0$ and, as above
\[
\sup_{|\alpha| \le M} | \partial^\alpha u | \le  c e^{-(2-\varepsilon)s}.
\]
Thus,
\[
u_s - (m-1) (1-|y|^2/2)^{\frac{2-m}{m-1}}    \sum_{i}\partial_i  [(1-|y|^2/2)^{\frac{1}{m-1}} \partial_i u]  = O(e^{-(4-\varepsilon)s})   \qquad \text{ on }
[0,\infty) \times B_{\sqrt{2}}(0)\,,
\]
 and we expand into the next modes (in $L^2$)
 \[ u= ( x^t Ax e^{-2s} + h_3(x) e^{-3s} + (|x|^2 - 2n(m-1)
 \lambda)e^{-s/\lambda} + O( e^{-(4-\varepsilon) s })\,,
\]
where $A$ is  a traceless matrix, $h_3$ is a homogeneous harmonic polynomial of
 degree $3$. It is not hard to show that  this expansion
 holds in a smooth sense, and to determine the next term.

\smallskip

\noindent (4) One can \ determine $A$ and the coefficients $h_3$ from
the initial data as follows: If $f$ is a smooth function then
\[ \frac{d}{dt} \int \rho f (x) dx =  \int \rho^m \Delta f dx.
\]
In particular, if $h$ is a harmonic polynomial then $\int \rho h dx =0 $.
Then, $A$ is given by the harmonic second order moments.

\noindent (5) In the radial case there exists $t_0$ so that
\[
\left|  e^{n\lambda s} \rho(e^{s\lambda}+ t_0, \sqrt{\lambda}  e^{\lambda s} y)^{m-1}
- (A_{n,m} - |x|^2/2)_+ \right| \le c  e^{-(2 + 4\lambda (m-1))s}.
\]

\medskip

\noindent {\bf Acknowledgment.} {H. Koch and C. Kienzler have been supported
by Hausdorff Center for Mathematics in Bonn.
The last author was supported by projects MTM2011-24696 and MTM2014-52240-P (Spain).}

\medskip


\begin{thebibliography}{99}

\bibitem {Ang} { S.~B. Angenent.} {\sl Large-time asymptotics of the porous media equation},  in ``Nonlinear Diffusion Equations and their Equilibrium States I'',
(Berkeley, CA, 1986), W.-M. Ni, L. A.  and J. Serrin eds., MSRI
Publ. {\bf 12}, Springer Verlag, Berlin, 1988.


\bibitem{AA95} {S.~B. Angenent, D.~G. Aronson.}
{\sl The focusing problem for the radially symmetric porous medium
     equation},
Comm. Partial Differential Equations {\bf 20 } (1995), 1217--1240.

\bibitem{Aro86} D. G. Aronson. {\sl \lq \lq The Porous Medium Equation''}. In: Nonlinear Diffusion Problems (A. Fasano and M. Primicerio, eds.), pp 1 - 46. Springer-Verlag, Berlin, 1986.

\bibitem{AC83} { D.~G. Aronson, L.~A. Caffarelli.}
{\sl The initial trace of a solution of the porous medium equation,} Trans.
     Amer. Math. Soc. 280 (1983), 351--366. 

\bibitem{AG93} { D.~G. Aronson,   J.~A. Graveleau.}
{\sl Self-similar solution to the focusing problem for the porous
medium equation,} {European J. Appl. Math.} {\bf 4} (1993), no.
1, 65--81.

\bibitem{ArVaz87}  D.~G. Aronson, J.~L.~V\'azquez.
{\sl Eventual $C^{\infty}$-regularity and concavity of flows in
one-dimensional porous  media},
 Archive Rat. Mech. Anal. {\bf 99} (1987), 329--348.

\bibitem{BCP84} { P. B\'enilan,  M.~G. Crandall, M. Pierre.}
{\sl Solutions of the porous medium in $\R^n$ under optimal conditions
on the initial  values.} {Indiana Univ. Math. Jour.}  {\bf 33}
(1984), pp. 51--87.


\bibitem{Caff87} L.~A. Caffarelli. {\sl A Harnack inequality approach to the regularity of free boundaries. Part I: Lipschitz free boundaries are $C^{1,\alpha}$}, Rev. Mat. Iberoamericana 3 (1987), no.2, 139--162.

\bibitem{Caff89} L.~A. Caffarelli. {\sl A Harnack inequality approach to the regularity of free boundaries. Part
II: Flat free boundaries are Lipschitz}, Comm. Pure Appl. Math. 42 (1989), no. 1, 55--78.


\bibitem{CaffFr} L.~A. Caffarelli. A.Friedman. {\sl  Regularity of the
free boundary of a gas flow in an $n$-dimensional porous medium,}
{Indiana Univ. Math. J.} {\bf 29} (1980), 361--391.

\bibitem{CVW87} L.~A. Caffarelli, J. L. V\'azquez, N. I. Wolanski. {\sl Lipschitz Continuity of
Solutions and Interfaces of the $N$-Dimensional Porous Medium Equation},
Indiana Univ. Math. J. {\bf 36} (1987), no. 2, 373--401.

\bibitem{CW90} L.~A. Caffarelli, N.~I. Wolanski. {\sl $C^{1,\alpha}$ Regularity
of the Free Boundary for the N-Dimensional Porous Media Equation,}
Comm. Pure Appl. Math. {\bf 43} (1990), no. 7, 885--902.

\bibitem{CaTo00}
{J.~A.~Carrillo, G.~Toscani.} {\sl Asymptotic $L^1$-decay of solutions of the porous medium
equation to self-similarity},
 Indiana Univ. Math. J. {\bf 49} (2000), 113--141.

\bibitem{Da1}  E.~B. Davies. {\sl ``Heat kernels and spectral theory''}, Cambridge Univ. Press, 1989.

\bibitem{FriedmanPDE} A. Friedman. {\sl \lq\lq Partial Differential Equations, }''
Holt, Rinehart, and Winston, New York (1969).

\bibitem{Kienz13} C. Kienzler. {\sl Flat Fronts and Stability for the Porous Medium Equation}, Dissertation, 2013.

\bibitem{Kienz14} C. Kienzler. {\sl Flat Fronts and Stability for the Porous Medium Equation}, arxiv.org 1403.5811, 2014.

\bibitem{Koc99} H. Koch. {\sl Non-Euclidean Singular Integrals and the Porous Medium Equation. Habilitation}, Ruprecht-Karls-Universit\"at Heidelberg, 1999.

\bibitem{LV03} K.-A. Lee,  J.~L. V\'azquez. {\sl  Geometrical
properties of solutions of the Porous Medium Equation for large
times,}  Indiana Univ. Math. J. {\bf 52} (2003), no. 4, 991--1016.

\bibitem{LUS75} O. A. Ladyzhenskaya, N. N. Ural'ceva, and V. A. Solonnikov. {\sl \lq \lq Linear and Quasi-Linear Equations of Parabolic Type''}. American Mathematical
Society, Providence, 1975.


\bibitem{Seis1}  C. Seis. {\sl Long-time asymptotics for the porous medium equation: The spectrum of the linearized operator},  J. Differential Equations {\bf 256 } (2014), no. 3, 1191--1223.




\bibitem{Seis2}  C. Seis. {\sl Invariant manifolds for the porous medium
equation}, arXiv:1505.06657.

\bibitem{Vsym} {J. L. V\'azquez},
{\sl Sym\'etrisation pour $u_t=\Delta\varphi(u)$ et applications,}
C. R. Acad. Sc. Paris {\bf  295} (1982), pp. 71--74.

\bibitem{vazquez1}  J.~L. V\'azquez. {\sl Asymptotic behaviour for the
Porous Medium Equation posed in the whole space}.
 J. Evol. Equ.  {\bf 3} (2003),  no. 1, 67--118.


\bibitem{Vsmooth} J.~L. V\'{a}zquez. {\sl \lq\lq Smoothing and Decay Estimates
 for Nonlinear Diffusion Equations. Equations of Porous Medium Type''.}
Oxford University Press, Oxford Lecture Series, 2006.


\bibitem{vazquezPME} J.~L. V\'{a}zquez. {\sl \lq\lq The porous medium equation.  Mathematical theory''}.    Oxford Mathematical Monographs. The Clarendon Press, Oxford University
Press, Oxford, 2007. 



\end{thebibliography}
\end{document}